\newcolumntype{2}{D{.}{}{2.0}}
  \def\su#1{{\sp{[#1]}}}
   \def\<{{\langle}} 
  \def\>{{\rangle}} 
  \def\ra{{\triangleleft}} 
  \def\la{{\triangleright}} 
  \def\eps{\varepsilon}
  \def\note#1{{}}
  \def\note#1{}
  \def\hom#1#2#3{{{\rm Hom}\sb{#1}(#2,#3)}} 
  \def\lend#1#2{{{\rm End}\sb{#1}(#2)}}
  \def\End#1#2{{{\rm End}\sb{#1}(#2)}}
  \def\ring{{\rm Ring}}
  \def\beq{\begin{equation}} 
  \def\eeq{\end{equation}}
  \def\id{\mathrm{id}} 
  \def\im{{\rm Im}}
 \def\pr{\mathrm{pr}}
 \def\tPsi{\widetilde{\Psi}}
  \newcounter{zlist} 
  \newenvironment{zlist}{\begin{list}{(\arabic{zlist})}{ 
  \usecounter{zlist}\leftmargin2.5em\labelwidth2em\labelsep0.5em 
  \topsep0.6ex
  \parsep0.3ex plus0.2ex minus0.1ex}}{\end{list}}
  \newcounter{blist} 
  \newenvironment{blist}{\begin{list}{(\alph{blist})}{ 
  \usecounter{blist}\leftmargin2.5em\labelwidth2em\labelsep0.5em 
  \topsep0.6ex 
  \parsep0.3ex plus0.2ex minus0.1ex}}{\end{list}} 
  \newcounter{rlist} 
  \newenvironment{rlist}{\begin{list}{(\roman{rlist})}{ 
  \usecounter{rlist}\leftmargin2.5em\labelwidth2em\labelsep0.5em 
  \topsep0.6ex 
  \parsep0.3ex plus0.2ex minus0.1ex}}{\end{list}}
\def\stac#1{\raise-.2cm\hbox{$\stackrel{\displaystyle\otimes}{\scriptscriptstyle{#1}}$}}
\def\cten#1{\raise-.2cm\hbox{$\stackrel{\displaystyle\reallywidehat{\otimes}}
{\scriptscriptstyle{#1}}$}}
  \def\Label#1{\label{#1}\ifmmode\llap{[#1] }\else 
  \marginpar{\smash{\hbox{\tiny [#1]}}}\fi} 
  \def\Label{\label}
  \newtheorem{proposition}{Proposition}[section]
  \newtheorem{lemma}[proposition]{Lemma} 
  \newtheorem{corollary}[proposition]{Corollary} 
  \newtheorem{theorem}[proposition]{Theorem} 
\theoremstyle{definition} 
  \newtheorem{definition}[proposition]{Definition}
  \newtheorem{example}[proposition]{Example}
  \theoremstyle{remark} 
  \newtheorem{remark}[proposition]{Remark}
  \newcounter{c} 
  \newcommand{\etyk}[1]{\vspace{-7.4mm}$$\begin{equation}\Label{#1} 
  \addtocounter{c}{1}} 
  \renewcommand{\]}{\ifnum \value{c}=1 $$\else \end{equation}\fi} 
   \numberwithin{equation}{section}
\def\FF{{\mathbb F}}
\def\NN{{\mathbb N}}
\def\ZZ{{\mathbb Z}}
\newcommand{\gG}{\mathrm{G}}
\newcommand{\hH}{\mathrm{H}}
\newcommand{\iI}{\mathrm{I}}
\newcommand{\rR}{\mathrm{R}}
\newcommand{\tT}{\mathrm{T}}
\newcommand{\Cc}{\mathcal{C}}
\newcommand{\treq}{\overset{tr}{\cong}}
\def\A{{\bf A}}
\def\B{{\bf B}}
\def\*C{{}^*\hspace*{-1pt}{\Cc}}
\def\text#1{{\rm {\rm #1}}}
 \def\1{\mathbf{1}}
  \def\la#1{\stackrel{#1}{\triangleright}}
   \def\ra#1{\stackrel{#1}\triangleleft}
\def\id{\mathrm{id}}
\def\di{\diamond}
\def\lto{\longmapsto}
\def\lra{\longrightarrow}
\def\pr#1{\bullet_{#1}}
    \def\eps{\varepsilon}
\def\1\mathbf{1}
\def\|#1{\overline{#1}}
\def\linv#1#2{\mathrm{Linv}(#1;#2)}
\def\rinv#1#2{\mathrm{Rinv}(#1;#2)}
\def\sige{\varepsilon}
\def\sigo{\sigma}
\def\sigol{\stackrel{\leftarrow}\sigma}
\def\sigor{\stackrel{\rightarrow}\sigma}
\def\truss {\mathbf{Trs}}
\def\ring {\mathbf{Ring}}
\newcommand\reallywidehat[1]{%
\savestack{\tmpbox}{\stretchto{%
  \scaleto{%
    \scalerel*[\widthof{\ensuremath{#1}}]{\kern.1pt\mathchar"0362\kern.1pt}%
    {\rule{0ex}{\textheight}}
  }{\textheight}%
}{2.4ex}}%
\stackon[-6.9pt]{#1}{\tmpbox}%
}
\begin{document}

\title{Ideal ring extensions and trusses}

\author{Ryszard R.\ Andruszkiewicz}

\address{
Faculty of Mathematics, University of Bia{\l}ystok, K.\ Cio{\l}kowskiego  1M,
15-245 Bia\-{\l}ys\-tok, Poland}

\email{randrusz@math.uwb.edu.pl}

\author{Tomasz Brzezi\'nski}

\address{
Department of Mathematics, Swansea University, 
Swansea University Bay Campus,
Fabian Way,
Swansea,
  Swansea SA1 8EN, U.K.\ \newline \indent
Faculty of Mathematics, University of Bia{\l}ystok, K.\ Cio{\l}kowskiego  1M,
15-245 Bia\-{\l}ys\-tok, Poland}

\email{T.Brzezinski@swansea.ac.uk}

\author{Bernard  Rybo{\l}owicz}

\address{
Department of Mathematics, Swansea University, 
Swansea University Bay Campus,
Fabian Way,
Swansea,
  Swansea SA1 8EN, U.K.}

\urladdr{https://sites.google.com/view/bernardrybolowicz/}
\email{Bernard.Rybolowicz@swansea.ac.uk}

\subjclass[2010]{16S70; 16Y99; 08A99}

\keywords{Truss; ring; extension}

\begin{abstract}
It is shown that there is a close relationship between ideal extensions of rings and trusses, that is, sets with a semigroup operation distributing over a ternary abelian heap operation.  Specifically, a truss can be associated to every element of an extension ring that projects down to an idempotent in the extending ring; every weak equivalence of extensions yields an isomorphism of corresponding trusses. Furthermore, equivalence classes of  ideal extensions of rings  by integers 
are in one-to-one correspondence with  associated trusses up to  isomorphism given by a translation. Conversely, to any truss $T$ and an element of this truss one can associate a ring and its extension by integers 
in which $T$ is embedded as a truss. Consequently any truss can be understood as arising from an ideal extension by integers. The key role is played by  interpretation of ideal extensions by integers as extensions defined by double homothetisms of Redei [L.\ Redei, Die Verallgemeinerung der Schreierschen Erweiterungstheorie, {\em Acta Sci.\ Math.\ Szeged},  {\bf 14} (1952),  252--273] or by self-permutable bimultiplications of Mac Lane [S.\ Mac Lane, Extensions and obstructions for rings, {\em Illinois J.\ Math.} {\bf 2} (1958), 316--345], that is,  as {\em integral homothetic extensions}.  It is shown that integral homothetic extensions of trusses are universal as extensions of trusses to rings but still enjoy a particular smallness property: they do not contain any subrings to which the truss inclusion map corestricts. Minimal extensions of trusses into rings are defined. The correspondence between homothetic ring extensions and trusses is used to classify fully up to isomorphism  trusses arising from rings with zero multiplication and rings with trivial annihilators.
\end{abstract}    
\date\today
\maketitle
\tableofcontents

\part{Prelude}\label{part.pre}
\section{Introduction}
A truss \cite{Brz:tru}, \cite{Brz:par} is an algebraic system consisting of a set with a ternary operation making it into an abelian heap \cite{Pru:the}, \cite{Bae:ein} and an associative binary operation that distributes over the ternary one. From the universal algebra point of view its composition involves less operations, so it is simpler, than that of a ring (which consists of two binary operations, one unary operation and one nullary operation) or a  (two-sided) brace \cite{Rum:bra}, \cite{CedJes:bra}, \cite{GuaVen:ske}, which as a set with two entangled group structures involves  six operations and whose connection with the set-theoretic Yang-Baxter equation has led to a remarkable surge in interest recently.  Yet a truss equipped with  a specific nullary operation or with an element with special properties can be made into a ring; if the binary operation is a group operation, then there is a (two-sided) brace associated to a truss. Conversely, every ring can be made into a truss in a natural way, by associating the (unique) heap operation $[a,b,c] =a-b+c$ to the abelian group operation, and so  can every brace. Thus trusses are both simpler in architecture and more general than rings. Alas their definition involves a ternary operation that is less intuitive and familiar than binary operations, and so one is faced with a familiar dilemma of generality versus comprehension.

In mathematics as in any other human endeavour  there is a natural tendency to familiarise what is new or unknown by contrasting or comparing it with what is well-known. Thus  the desire to see how trusses are related to rings or how trusses can be described in ring-theoretic terms  is most understandable. The key observation of Rump \cite{Rum:bra} that a two-sided brace can be made into a radical ring by modifying one of its operations (and vice versa, a radical ring gives rise to a two-sided brace) has been extended to trusses in \cite{Brz:tru}, \cite{Brz:par} with a caveat that a central element must exist; the resulting ring is not necessarily a radical ring. In this paper we show that a ring can be associated to any truss and any element, but we go further than that. We show that a natural framework for ring-theoretic studies of trusses is provided by ideal ring extensions (see, for example, \cite{Pet:ide}), in particular those that arise from Redei's homothetisms \cite{Red:ver} or Mac Lane's  self-permutable bimultiplications \cite{Mac:ext}. In short, we show that a truss can be associated to every element of an extension ring that projects down to an idempotent in the extending ring, while every weak equivalence of extensions yields an isomorphism of corresponding trusses. Furthermore,  equivalence classes of extensions of rings by integers are in one-to-one correspondence with the class of associated trusses up to isomorphism given by translation. Conversely, to any truss $T$ and an element of this truss one can associate a ring together with its extension by integers  in which $T$ is embedded as a truss (and also as a paragon or an equivalence class of a congruence of rings). Since every ideal extension of a ring by integers is equivalent to an extension by $\ZZ$ through a double homothetism, any truss can be understood as arising from such a homothetic extension, i.e.\ every  truss is a {\em homothetic truss}. Note in passing that {\em all} (not only those by integers) ideal ring extensions arise from families of permutable bimultiplications or amenable homothetisms by the Everett theorem \cite{Eve:ext} (see \cite{Pet:ide} for an elegant presentation and simplification of the proof).

The paper is divided into four parts and a short coda. The first part (which includes this introduction) contains preliminary definitions, in particular basic notions from the theory of heaps and trusses. The second part gathers main results of the paper. We begin by recalling the definitions of ring extensions and their equivalences. An extension $(\varphi_R, S,\varphi_Z)$ consists of a ring monomorphism $\varphi_R: R\lra S$ and a ring epimorphism $\varphi_Z: S\lra Z$ such that $\ker \varphi_Z = \im \varphi_R$ (unless stated otherwise, by a ring we mean an associative ring not necessarily with identity).  Following the terminology of Mac Lane \cite{Mac:ext} we refer to such an extension as to an extension of $R$ to $S$ by $Z$ (note that an opposite terminology is often used in homological algebra). Two extensions $(\varphi_R, S,\varphi_Z)$ and $(\varphi'_R, S',\varphi'_Z)$ are equivalent if there is an isomorphism between $S$ and $S'$ that commutes with the identity automorphisms on $R$ and $Z$. Furthermore, we say that $(\varphi_R, S,\varphi_Z)$ and $(\varphi'_R, S',\varphi'_Z)$ are weakly equivalent, provided that they are equivalent up to an automorphism of $R$. We observe  in Proposition~\ref{prop.truss.ext} that if there is $q\in S$ such that $q^2-q \in \varphi_R(R)$, then $q+\varphi_R(R)$ is a sub-truss of the truss associated to $S$. Furthermore, any  weak equivalence map for extensions restricts to an isomorphism of corresponding trusses. We then proceed to focus on a class of extensions arising from double homothetisms or self-permutable bimultiplications, i.e.\ pairs of additive endomorphisms  $\sigma = (\sigor, \sigol)$ of a ring $R$ satisfying a number of associativity-like conditions; see Definition~\ref{def.homothetism}. In Theorem~\ref{thm.h.ext} we associate an extension of a ring $R$ by integers (or by integers modulo the exponent of the abelian group of $R$ if finite) to a pair $(\sigma, s)$ consisting of a double homothetism $\sigma$ on $R$ and an element $s\in R$ such that $\sigma s = s\sigma$ and $\sigma^2 - \sigma$ is the bimultiplication by $s$. We term the resulting extension of $R$, denoted by $R(\sigma,s)$, a homothetic extension of $R$ (infinite in the integral case and cyclic or finite in the modular case). Next we note that in fact every ring extension by $\ZZ$ is equivalent to an infinite homothetic extension and that every $R(\sigma, s)$ induces a truss $T(\sigma, s)$ on the heap corresponding to the abelian group of $R$. We term $T(\sigma, s)$ a {\em homothetic truss}. In a couple of lemmas leading to Theorem~\ref{thm.h.equiv} we describe isomorphisms of homothetic trusses and connect them with (weak) equivalences of homothetic extensions: Two extensions are weakly equivalent if and only if corresponding trusses are isomorphic, while equivalences of extensions correspond to isomorphisms of trusses given by translations by an element.

In Section~\ref{sec.truss-ext} we take the opposite route: from trusses to ring extension. The first main result of this section is Theorem~\ref{thm.truss.ring}, which allows one to associate a ring to any truss $T$ and an element $e\in T$. We denote this truss by $\rR(T;e)$. This ring has addition obtained by retracting of the ternary operation in $T$ at $e$ (so it has an abelian group structure derived from the heap structure of $T$), but with a modified multiplication. There are no assumptions on $e$, and thus Theorem~\ref{thm.truss.ring} provides one with a generalisation of the construction presented in \cite{Brz:tru}, which associates a ring to a central element of a truss or the construction of Rump \cite{Rum:bra} connecting two-sided braces with radical rings. The way to recover the original multiplication of $T$ is described in Theorem~\ref{thm.hom}: $e$ determines a homothetic datum $(\eps, e^2)$ on $\rR(T;e)$, and $T$ is the truss embedded in the corresponding homothetic ring extension of $\rR(T;e)$, denoted by $T(e)$ in the infinite case or $T^\mathrm{c}(e)$ in the cyclic case, that is, $T=\tT(\eps,e^2)$. The results of Part~\ref{part.ext} thus can be summarised as one of two main messages of this paper: {\bf\em every truss is a homothetic truss;
every ring is a ring associated to a truss with a fixed element.}

Part~\ref{part.int} on one hand gives a categorical interpretation of the infinite homothetic ring extensions $T(e)$ in which a truss $T$ embeds, while on the other attempts at characterisation of smallest rings into which $T$ embeds. In Theorem~\ref{thm.ext.0} we construct a ring isomorphism between $T(e)$ and the universal ring $T_0$ obtained from a truss $T$ by adjoining the zero element \cite[Lemma~3.13]{BrzRyb:mod}. As a consequence, the infinite homothetic extension $T(e)$ has the following universal property: any truss homomorphism from $T$ to a ring $R$ factorises through the inclusion $T\hookrightarrow T(e)$ and a unique ring homomorphism $T(e)\lra R$, i.e. there exists a universal arrow from $T$ to the functor $\tT: \ring\lra \truss$ (see \cite[Section~III.1]{Mac:lane}). Section~\ref{sec.min} outlines various possible notions of `smallness' of a ring that contains $T$. The most crude one is proposed in Definition~\ref{def.locsmal}: a locally small extension of $T$ is given by a ring which has no proper subrings containing $T$ (as a sub-truss).  The infinite homothetic extension of $T$ into ring $T(e)$ has this property. The intermediate notion given in Definition~\ref{def.small}  distinguishes those small extensions in which $T$ is isomorphic (as a heap) with an essential ideal. Finally, a minimal extension corresponds to ideals in $T(e)$ which intersect trivially with the ideal induced by the canonical inclusion of $T$ into $T(e)$. All these notions and differences between them are illustrated by examples.

Part~\ref{part.class} uses results of Part~\ref{part.ext}, in particular that every truss is a homothetic truss, to classify all trusses corresponding to rings with zero multiplication (Section~\ref{sec.zero}) and zero annihilators (Section~\ref{sec.ann}).  In the first case, in which all the structural information is necessarily contained in the abelian group of the ring, we show that there is a one-to-one correspondence between isomorphism classes of trusses corresponding to rings with zero multiplication on an abelian group $A$ and ordered direct sum decompositions of $A$ into four subgroups; Theorem~\ref{thm.zero}. In the latter case there is a  bijective correspondence between isomorphism classes of homothetic trusses on $R$ and equivalence classes of idempotents in the ring $\Xi(R)$ of outer bimultiplications on $R$ (with respect to the relation defined in Definition~\ref{def.out.equiv}); see Theorem~\ref{thm.ann}. In particular, and quite surprising, there are exactly two isomorphism classes of trusses on one-sided maximal ideals in simple rings with identity; see Theorem~\ref{thm.simple}. This seems to be a rather unexpected application of the theory of maximal essential extensions developed by Beidar \cite{Bei:ato}, \cite{Bei:ess}. All the results of this part are employed to give a full classification of non-isomorphic trusses with the heap structure corresponding to the abelian group $\ZZ_p\times \ZZ_p$.

\section{Preliminaries}\label{sec.prem}
We start by gathering in one place key information about heaps and trusses, and by establishing the notation. Further details can be found in e.g.\ \cite{Brz:par}.

An abelian heap is a set $H$ with a ternary operation $[-,-,-]$ such that, for all $h_i$, $i=1,\ldots, 5$.
\begin{subequations}\label{heap}
\begin{equation}\label{assoc}
[h_1,h_2, [h_3,h_4,h_5]] = [[h_1,h_2, h_3],h_4,h_5],
\end{equation}
\begin{equation}\label{Malcev}
[h_1,h_1,h_2] = h_2 \quad \&  \quad [h_1,h_2,h_2] = h_1,
\end{equation}
\begin{equation}\label{comm}
[h_1,h_2,h_3] = [h_3,h_2,h_1].
\end{equation}
\end{subequations}
Equation \eqref{assoc} expresses the associative law for heaps, equations \eqref{Malcev} are known as Mal'cev identities, and equation \eqref{comm} is the heap commutative law. In view of these axioms the distribution of brackets in multiple applications of the ternary operation does not play any role, and hence we write
$$
[h_1,h_2,\ldots, h_{2n+1}]
$$
for the element of $H$ obtained by any possible application of the ternary operation to the (always odd) $2n+1$-tuple $(h_1,h_2,\ldots, h_{2n+1}) \in H^{2n+1}$. Furthermore, equations \eqref{Malcev} and \eqref{comm} yield the following cancellation and rearrangement rules, for all $h_1,h_2,\ldots, h_{2n+1}\in H$,
\begin{subequations}\label{rear}
\begin{equation}\label{cancel}
[h_1,\ldots, h_{i-1},h_i,h_i,h_{i+1},\ldots h_{2n}] = [h_1,\ldots, h_{i-1},h_{i+1},\ldots h_{2n}],
\end{equation}
\begin{equation}\label{symm}
[h_1,h_2,\ldots, h_{2n+1}] = [h_{\varpi(1)},h_{\varsigma(2)},h_{\varpi(3)},\ldots, h_{\varsigma(2n)},h_{\varpi(2n+1)}],
\end{equation}
\end{subequations}
for any permutation $\varpi$ on the set $\{1,3,5,\ldots, 2n+1\}$ and any permutation $\varsigma$ on $\{2,4,\ldots, 2n\}$. 

For any $e\in H$, the set $H$ with the binary operation $+_e = [-,e,-]$ is an abelian group, known as a retract of $H$. The chosen element $e$ is  the zero for this group and the inverse $-_eh$ of $h$ is $[e,h,e]$. We denote this unique up to isomorphism group by $\gG(H;e)$. Conversely, for any (abelian) group $G$, the operation $[a,b,c] = a-b+c$ defines the  heap structure on $G$; we denote this heap by $\hH(G)$. A homomorphism of heaps is a function that preserves heap operations. In particular, any group homomorphism is a heap homomorphism for the corresponding heaps so the assignment $\hH: G\lto \hH(G)$ is a functor from the category of (abelian) groups to the category of (abelian) heaps.

A truss is a set $T$ together with a ternary operation $[-,-,-]$ and a binary operation $\cdot$ (denoted by a juxtaposition of elements and called  multiplication) such that $(T,[-,-,-])$ is an abelian heap, $(T,\cdot)$ is a semigroup and $\cdot$ distributes over $[-,-,-]$, that is, for all $a,b,c,d\in T$,
\begin{equation}\label{dist}
a[b,c,d] = [ab, ac,ad] \quad \& \quad [a,b,c]d = [ad,bd,cd].
\end{equation}
A morphism of trusses is a function that is a homomorphism of both heaps and semigroups.
Unless stated otherwise, by a ring we mean an associative ring not necessarily with identity. To any ring $(R,+,\cdot)$ one can associate a truss $\tT(R)$ with the heap structure $\hH(R,+)$ and the original multiplication.  The assignment $\tT: R\lto \tT(R)$ and identity on morphisms is a functor from the category of rings to the category of trusses. Conversely, if a truss $T$ has an {\em absorber}, that is, an element $e$ such that $ea=ae=e$, for all $a\in T$, then $\rR(T;e) := (\gG(R;e),\cdot)$ is a ring and $T= \tT(\rR(T;e))$. 

An equivalence class of a congruence in a truss $T$ is called a {\em paragon}. Equivalently, a paragon $P$ is a sub-heap of $(T,[-,-,-])$ such that, for all $p,q\in P$ and $a\in T$,
\begin{equation}\label{par.def}
[ap,aq,q]\in P \quad \& \quad [pa,qa,q]\in P.
\end{equation}
A sub-heap satisfying the first of equations of \eqref{par.def} is called a {\em left paragon} and one that satisfies the second of these equations is called a {\em right paragon}.

\part{Extensions}\label{part.ext}

\section{From ideal extensions of rings to trusses}\label{sec.ext-truss}
This section contains the first main results of the paper. We begin by recalling the notions of ideal extensions of rings and (weak) equivalences between such extensions. Next we show that one can associate a truss to every element of an extension ring that yields an idempotent in the ring by which the extension is achieved. Weak equivalences of extensions restrict to isomorphisms  of these trusses. In the converse direction we construct (infinite and finite) extensions of a given ring by double homothetisms, and show that such extensions are weakly equivalent if and only if the associated trusses are isomorphic and equivalent if and only if the associated trusses are isomorphic by a translation or {\em translationally isomorphic}. We also observe that any infinite homothetic extension is equivalent to an extension by the ring of integers.

The following definition  is taken from \cite{Mac:ext} (see also \cite{Pet:ide}).

\begin{definition}\label{def.ring.ext}
An exact sequence of ring homomorphisms
$$
\xymatrix{0 \ar[r] & R \ar[r]^{\varphi_R} & S\ar[r]^{\varphi_Z} & Z \ar[r] & 0,}
$$
is called an {\em ideal ring extension of $R$ to $S$ by $Z$} or simply a {\em ring extension}. We write $({\varphi_R}, S, {\varphi_Z})$ for this ring extension.

Two extensions $({\varphi_R}, S, {\varphi_Z})$ and $({\varphi'_R}, S', {\varphi'_Z})$ are said to be  {\em equivalent} if there exists a ring isomorphism $\Theta: S\lra S'$ rendering the following diagram commutative
\begin{equation}\label{ext.equiv}
\xymatrix{R\ar[rr]^{\varphi_R} \ar[d]_{\varphi'_R} && S \ar[d]^{\varphi_Z}\ar[dll]^\Theta\\
S'\ar[rr]_{\varphi'_Z}&& Z.}
\end{equation}
We write $({\varphi_R}, S, {\varphi_Z}) \stackrel{\Theta}{\equiv} ({\varphi'_R}, S', {\varphi'_Z})$.

Two extensions $({\varphi_R}, S, {\varphi_Z})$ and $({\varphi'_R}, S', {\varphi'_Z})$ are said to be  {\em weakly equivalent} if there exist a ring isomorphism $\Theta: S\lra S'$  and a ring automorphism $\Theta_R:R\lra R$ such that $({\varphi_R}, S, {\varphi_Z}) \stackrel{\Theta}{\equiv} (\varphi'_R\circ \Theta_R, S', {\varphi'_Z})$. In that case we write $({\varphi_R}, S, {\varphi_Z}) \stackrel{\Theta}{\cong} (\varphi'_R, S', {\varphi'_Z})$
\end{definition}

The reader should be made aware that particularly in the texts in homological algebra (see e.g.\ \cite[Section~9.3]{Wei:int}) the sequence in Definition~\ref{def.ring.ext} is referred to as an extension of $Z$ to $S$ by $R$. We have elected here to chose the conventions of Mac Lane \cite{Mac:ext}.

\begin{proposition}\label{prop.truss.ext}
Let $(\varphi_R,S,\varphi_Z)$ be an ideal ring extension and let $q\in S$. Then
\begin{zlist}
\item The set $q+\varphi_R(R)$ is a sub-heap of $\hH(S)$ and a paragon of $\tT(S)$.
\item The sub-heap $q+\varphi_R(R)$ is a sub-truss of $\tT(S)$ if and only if
\begin{equation}\label{q.cond}
q^2 - q \in \varphi_R(R)
\end{equation}
This truss is denoted by $T(\varphi_R,S,\varphi_Z;q)$.
\item If  $({\varphi_R}, S, {\varphi_Z}) \stackrel{\Theta}{\cong} ({\varphi'_R}, S, {\varphi'_Z})$, then the map $\Theta$ restricts to the isomorphism of trusses $T(\varphi_R,S,\varphi_Z;q)\cong T(\varphi'_R,S',\varphi'_Z;\Theta(q))$. 
\end{zlist}
\end{proposition}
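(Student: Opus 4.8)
The plan is to let the entire argument rest on a single structural observation: since $\varphi_Z$ is a ring homomorphism with $\ker\varphi_Z = \im\varphi_R$, the subset $I := \varphi_R(R)$ is a \emph{two-sided ideal} of $S$, and in particular an additive subgroup. Part~(1) then reduces to coset bookkeeping. Writing typical elements of $q+I$ as $q+x$, $q+y$, $q+z$ with $x,y,z\in I$, and recalling that the heap operation on $\hH(S)$ is $[a,b,c]=a-b+c$, I would compute $[q+x,q+y,q+z]=q+(x-y+z)$, which lies in $q+I$ because $I$ is an additive subgroup; this gives the sub-heap claim. For the paragon conditions \eqref{par.def} I would substitute $p=q+x$, $p'=q+y$ and arbitrary $a\in S$: after the two $aq$ (resp.\ $qa$) terms cancel, $[ap,ap',p']$ simplifies to $q$ plus the element $ax-ay+y$, and $[pa,p'a,p']$ to $q$ plus $xa-ya+y$. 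Each correction term lies in $I$ precisely because $I$ is a two-sided ideal, so both results sit in $q+I$, establishing the paragon property.

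For part~(2) the ideal property is again decisive. Expanding $(q+x)(q+y)=q^2+(qy+xq+xy)$ and noting that $qy$, $xq$ and $xy$ all lie in $I$, the product equals $q^2$ modulo $I$. Hence $q+I$ is closed under multiplication if and only if the cosets $q^2+I$ and $q+I$ coincide, i.e.\ if and only if $q^2-q\in I=\varphi_R(R)$, which is exactly \eqref{q.cond}; equivalently, $\varphi_Z(q)$ is idempotent in $Z$. Combined with part~(1), this yields the sub-truss $T(\varphi_R,S,\varphi_Z;q)$ under precisely the stated hypothesis.

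For part~(3) I first unpack the weak equivalence: it supplies a ring isomorphism $\Theta:S\lra S'$ and a ring automorphism $\Theta_R:R\lra R$ with $\Theta\circ\varphi_R=\varphi'_R\circ\Theta_R$ and $\varphi'_Z\circ\Theta=\varphi_Z$. Because $\Theta_R$ is surjective, this forces $\Theta(\varphi_R(R))=\varphi'_R(\Theta_R(R))=\varphi'_R(R)=:I'$. Applying the ring homomorphism $\Theta$ to the defining condition gives $\Theta(q)^2-\Theta(q)=\Theta(q^2-q)\in\Theta(I)=I'$, so by part~(2) the target $T(\varphi'_R,S',\varphi'_Z;\Theta(q))$ really is a sub-truss. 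Finally $\Theta$ maps $q+I$ bijectively onto $\Theta(q)+\Theta(I)=\Theta(q)+I'$, and since $\Theta$ is additive and multiplicative it preserves both the heap operation and the multiplication; its restriction is therefore the claimed truss isomorphism.

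All the computations are short, so the only point demanding genuine care is the bookkeeping in part~(3): confirming that a \emph{weak} rather than strict equivalence still carries $\varphi_R(R)$ onto $\varphi'_R(R)$. This is exactly where the hypothesis that $\Theta_R$ be an automorphism (and not merely an endomorphism) is used, and it is also what guarantees that the sub-truss condition \eqref{q.cond} is transported intact along $\Theta$. Everything else follows mechanically from the ideal property of $\varphi_R(R)$ together with the fact that $\Theta$ respects both ring operations.
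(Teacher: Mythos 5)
Your proposal is correct and follows essentially the same route as the paper: everything rests on $\varphi_R(R)=\ker\varphi_Z$ being a two-sided ideal, part (2) is the observation that the product of two coset representatives is $q^2$ modulo the ideal, and part (3) uses exactly the intertwining relations $\Theta\circ\varphi_R=\varphi'_R\circ\Theta_R$ and $\varphi'_Z\circ\Theta=\varphi_Z$ together with surjectivity of $\Theta_R$. The only divergence is in part (1), where the paper simply cites the fact that cosets of an ideal are paragons (Corollary~3.3 of the congruence-classes paper), while you verify the sub-heap and paragon axioms by direct computation; your version is self-contained, and you are also slightly more careful than the paper in part (3) by explicitly checking that $\Theta(q)^2-\Theta(q)\in\varphi'_R(R)$ so that the target sub-truss actually exists.
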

\begin{proof}
Since $\varphi_R(R)=\ker\varphi_Z$ is an ideal in $S$, for all $q\in S$, $q+\varphi_R(R)$ is an equivalence class of a congruence relation in $S$, hence it is a paragon in $\tT(S)$ by \cite[Corollary~3.3]{BrzRyb:con}. Condition \eqref{q.cond} is equivalent to the statement that $\varphi_Z(q) = q+\varphi_R(R)$ is an idempotent in $Z$, hence $(q+\varphi_R(R))^2  = q+\varphi_R(R)$ is closed under the multiplication in $S$.

Since $\Theta$ is a ring isomorphism, its restriction to $T(\varphi_R,S,\varphi_Z;q)$ is a monomorphism of trusses. We need to show that $\Theta(T(\varphi_R,S,\varphi_Z;q)) = T(\varphi'_R,S',\varphi'_Z;\Theta(q))$. For all $r\in R$,
$$
\Theta(q+\varphi_R(r)) = \Theta(q) + \Theta(\varphi_R(r)) = \Theta(q) +\varphi'_R(\Theta_R(r)) \in T(\varphi'_R,S',\varphi'_Z;\Theta(q)).
$$
The commutativity of the diagram \eqref{ext.equiv} and the fact that $\Theta_R$ is an automorphism of rings  yield the surjectivity of $\Theta$. This completes the proof of the proposition.
\end{proof}

Since weakly equivalent extensions give rise to isomorphic trusses, Proposition~\ref{prop.truss.ext} provides one with a method of constructing (isomorphism classes of) trusses. In fact one can look for a statement in the opposite direction, that is, for types of extensions whose (weak) equivalence classes are determined by (isomorphism classes of) corresponding trusses. To this end we need to focus on extensions of a more specific kind.

\begin{definition}\label{def.homothetism}
Let $R$ be a ring and let $\sigo$  be a double operator on $R$, that is a pair of additive endomorphisms,
$$
\sigor:R\lra R,\quad a\lto \sigo a,\qquad \sigol:R\lra R,\quad a\lto a\sigo .
$$
\begin{zlist}
\item The double operator $\sigo$ is called a  {\em bimultiplication} \cite{Mac:ext} or a {\em bitranslation} \cite{Pet:ide} if, for all $a,b\in R$,
\begin{subequations}\label{bimult}
\begin{equation} \label{h.linear}
\sigo(ab) = (\sigo a)b \quad \& \quad (ab)\sigo = a( b\sigo),
\end{equation}
\begin{equation}\label{h.assoc}
a(\sigo b) = (a\sigo) b.
\end{equation}
\end{subequations}
The set of all bimultiplications is denoted by $\Omega(R)$.
\item A bimultiplication $\sigo$ is called a
{\em double homothetism} \cite{Red:ver} or is said to be  {\em self-permutable} \cite{Mac:ext} provided that, for all $a\in R$,
\begin{equation}\label{h.comm}
(\sigo a) \sigo =\sigo (a \sigo) .
\end{equation}
The set  of all double homothetisms on $R$ is denoted by $\Pi(R)$. 
\end{zlist}
\end{definition}

In short, conditions \eqref{h.linear} mean that $\sigor$ is a right and $\sigol$ is a left $R$-module homomorphism. A bimultiplication is called simply a   {\em multiplication} in  \cite{Hoc:coh}. In functional analysis, in particular in the context of $C^*$-algebras, bimultiplications are known as {\em multipliers} \cite{Hel:mul}, \cite{Bus:dou}. The relations  \eqref{h.comm} mean that  $\sigor$ commutes with $\sigol$ in the endomorphism ring $\lend{}{R,+}$. 
The set $\Omega(R)$ is a unital ring with the addition and multiplication, for all $\sigo, \sigo'\in \Omega(R)$, $a\in R$,
\begin{subequations}\label{ring.omega}
\begin{equation}\label{+omega}
(\sigo+\sigo')a = \sigo a + \sigo' a, \qquad a(\sigo+\sigo') = a\sigo  + a\sigo',
\end{equation}
\begin{equation}\label{x.omega}
(\sigo\sigo') a = \sigo(\sigo'a), \qquad a(\sigo\sigo')  = (a\sigo)\sigo'.
\end{equation}
\end{subequations}
In particular in the context of $C^*$-algebras, $\Omega(R)$ is known as a {\em multiplier algebra}. Note that the rules \eqref{x.omega} mean the composition of right-linear components of bimultiplictions and {\em opposite} composition of the left-linear ones. It is clear from \eqref{ring.omega} that $R$ is an $\Omega(R)$-bimodule.

In general, $\Pi(R)$ need not be a subring of $\Omega(R)$. 
The rules of Definition~\ref{def.homothetism} mean that we need not write any brackets in-between letters of the words composed of elements of $R$ and a homothetism on $R$.

For any $a\in R$, the left and right multiplications by $a$ form a double homothetism, which we denote by $\bar{a}$. That is, for all $b\in R$,
\begin{equation}\label{inner.hom}
\bar{a} b = ab, \qquad b\bar{a} = ba.
\end{equation}
Such a double homothetism is said to be {\em inner} and the abelian group of all inner homothetisms is denote by $\bar{R}$. The right $R$-module and left $R$-module components of $\bar{a}$ are denoted by $\stackrel{\rightarrow}a$ and $\stackrel{\leftarrow}a$ respectively. Note that if $\sigo$ is a double homothetism, then,   for all $a\in R$, $\sigo +\bar{a}$ is also a double homothetism by equations \eqref{bimult}. By the same token, for all $a\in R$ and $\sigma \in \Omega(R)$,
\begin{equation}\label{inn.ideal}
\bar{a}\sigo = \overline{a\sigma} \quad \& \quad \sigo\bar{a} = \overline{\sigo a}.
\end{equation}
This implies that $\bar{R}$ is an essential ideal in $\Omega(R)$. The quotient ring $\Omega(R)/\bar{R}$ is called the ring of {\em outer bimultiplications} and is denoted by $\Xi(R)$ (in context of $C^*$-algebras, $\Xi(R)$ might be referred to as a {\em corona algebra}). The canonical surjection $\Omega(R)\lra \Xi(R)$ is denoted by $\xi$.

The following lemma can be proven by direct calculations.
\begin{lemma}\label{lem.h.aut}
Let $\Phi$ be an automorphism of a ring $R$. For any bimultiplication $\sigo \in \Omega(R)$ define the double operator ${\Phi^*(\sigma)}$ on $R$ by
\begin{equation}\label{adj}
\stackrel{\lra}{\Phi^*(\sigma)}: a\lto \Phi(\sigo \Phi^{-1}(a)), \qquad \stackrel{\longleftarrow}{\Phi^*(\sigma)}: a \lto\Phi(\Phi^{-1}(a)\sigo ).
\end{equation}
The assignment $\sigma \lto \Phi^*(\sigma)$ defines a ring automorphism  $\Phi^*$ on $\Omega(R)$. Furthermore, $\Phi^*(\Pi(R)) = \Pi(R)$, $\Phi^*$ maps inner homothetisms to inner ones, and, for all $a\in R$,
\begin{equation}\label{adj.inner}
\overline{\Phi(a)} = \Phi^*(\bar{a}).
\end{equation}
\end{lemma}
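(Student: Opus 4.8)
The plan is to verify Lemma~\ref{lem.h.aut} by checking each assertion directly, since the statement itself promises that everything follows ``by direct calculations.'' The only real content is to confirm that the formulas \eqref{adj} behave well: that $\Phi^*(\sigma)$ is again a bimultiplication, that $\Phi^*$ is multiplicative and additive, that it preserves $\Pi(R)$ and $\bar R$, and that \eqref{adj.inner} holds. I would organise the work around the conjugation structure, observing at the outset that $\stackrel{\lra}{\Phi^*(\sigma)} = \Phi\circ\sigor\circ\Phi^{-1}$ and $\stackrel{\longleftarrow}{\Phi^*(\sigma)} = \Phi\circ\sigol\circ\Phi^{-1}$ as additive endomorphisms of $R$; this single observation makes most of the algebra transparent because conjugation by a fixed automorphism is automatically an additive-ring automorphism of $\lend{}{R,+}$.

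First I would show $\Phi^*(\sigma)\in\Omega(R)$. For the module conditions \eqref{h.linear}, one computes $\stackrel{\lra}{\Phi^*(\sigma)}(ab) = \Phi\!\left(\sigor\Phi^{-1}(ab)\right) = \Phi\!\left(\sigor(\Phi^{-1}(a)\Phi^{-1}(b))\right)$, and then uses that $\Phi^{-1}$ is a ring homomorphism together with the right-linearity of $\sigor$ to pull $\Phi^{-1}(b)$ outside, finally reapplying $\Phi$; the left-linearity condition is symmetric. The permutability condition \eqref{h.assoc} for $\Phi^*(\sigma)$ reduces, after inserting $\Phi\Phi^{-1}=\id$ between the factors, to the same condition \eqref{h.assoc} for $\sigma$ transported through $\Phi^{-1}$ and back. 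The same mechanism handles the self-permutability condition \eqref{h.comm}, giving $\Phi^*(\Pi(R))\subseteq\Pi(R)$; applying the argument to $\Phi^{-1}$ in place of $\Phi$ gives the reverse inclusion, hence equality. Additivity of $\sigma\mapsto\Phi^*(\sigma)$ is immediate from \eqref{+omega} and linearity of $\Phi$, and multiplicativity follows from the conjugation description together with the composition rules \eqref{x.omega}: the right-linear components compose in the given order and the left-linear ones in the opposite order, and conjugation respects composition on each side (with the order-reversal on the left side matching exactly because the opposite composition of $\Phi\sigol\Phi^{-1}$ and $\Phi\sigol'\Phi^{-1}$ is $\Phi(\sigol\sigol')\Phi^{-1}$ read in the correct order). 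That $\Phi^*$ is bijective is clear since $(\Phi^{-1})^*$ is a two-sided inverse. Finally, \eqref{adj.inner} is a short direct check: for inner $\bar a$ one has $\stackrel{\lra}{\Phi^*(\bar a)}(b) = \Phi(a\,\Phi^{-1}(b)) = \Phi(a)b = \stackrel{\rightarrow}{\Phi(a)}(b)$ using that $\Phi$ is multiplicative, and symmetrically on the left; this simultaneously shows $\Phi^*$ maps inner homothetisms to inner ones.

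The only place demanding care — what I expect to be the main obstacle — is the bookkeeping in the multiplicativity claim, because the left-linear components of bimultiplications compose in the \emph{opposite} order per \eqref{x.omega}, so one must confirm that conjugation interacts correctly with this reversal and that no spurious order-swap is introduced. Concretely, I would check $a\big((\Phi^*(\sigma))(\Phi^*(\sigma'))\big) = \Phi\!\left(\left(\Phi^{-1}(a)\sigo\right)\sigo'\right)$ and match it against $a\,\Phi^*(\sigma\sigma')$, being scrupulous about whether the inner $\Phi\Phi^{-1}$ cancellations land the letters in the right positions; the right-linear side is unproblematic but provides a useful consistency check. Everything else is routine manipulation using only that $\Phi$ and $\Phi^{-1}$ are ring automorphisms and the defining identities \eqref{bimult}--\eqref{h.comm} and \eqref{ring.omega}.
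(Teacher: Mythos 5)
Your verification is correct and is precisely the "direct calculation" that the paper invokes without writing out: the conjugation observation $\stackrel{\lra}{\Phi^*(\sigma)}=\Phi\circ\sigor\circ\Phi^{-1}$, the transport of \eqref{bimult}--\eqref{h.comm} through $\Phi^{-1}$, the order-reversal check for the left components against \eqref{x.omega}, invertibility via $(\Phi^{-1})^*$, and the computation $\Phi^*(\bar a)b=\Phi(a\,\Phi^{-1}(b))=\Phi(a)b$ all go through exactly as you describe. No gaps.
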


Equation \eqref{adj.inner} implies that $\Phi^*:\Omega(R)\lra \Omega(R)$ descends to the ring automorphism $\Phi^\di : \Xi(R)\lra \Xi(R)$ by the diagram
\begin{equation}\label{diag.phi.di}
\xymatrix{ \Omega(R)\ar[rr]^-{\Phi^*} \ar[d]_\xi &&  \Omega(R)\ar[d]_\xi \\
\Xi(R)\ar[rr]^-{{\Phi^\di}} &&  \Xi(R),
}
\end{equation}
where $\xi$  is the canonical surjection. The following notion will prove particularly helpful for discussing trusses associated to rings with trivial annihilators.

\begin{definition}\label{def.out.equiv}
Two outer bimultiplications $\sigo, \sigo'\in \Xi(R)$ are said to be {\em equivalent} if there exists a ring automorphism $\Phi: R\lra R$ such that 
$
\sigma' ={\Phi^\di}(\sigma).
$
In that case we write $\sigo\sim\sigo'$.
\end{definition}

With all these notions and notation at hand we are now ready to consider extensions of our particular interest.

\begin{theorem}\label{thm.h.ext}
Let $R$ be a ring, and $\sigma \in \Pi(R)$ and $s \in R$ such that, 
\begin{subequations}\label{h.ext.cond}
\begin{equation}\label{h.norm}
\sigma s = s \sigma,
\end{equation}
\begin{equation}\label{h.sig}
\sigma^2   = \sigma + \bar{s}.
\end{equation}
\end{subequations}
Then
\begin{zlist}
\item 
\begin{blist}
\item The abelian group $R\times \ZZ$ together with the product, for all $a,b\in R$, $k,l\in \ZZ$,
\begin{equation}\label{prod.Z}
(a,k)(b,l) = \left(ab +l a\sigma + k\sigma b + kls, kl\right),
\end{equation}
is an associative ring. This ring is denoted by $R(\sigma,s)$.
\item The sequence
\begin{equation}\label{seq.inf}
\xymatrix{0 \ar[r] & R \ar[r]^-{\varphi_R} & R(\sigma,s) \ar[r]^-{\varphi_\ZZ} & \ZZ \ar[r] & 0,}
\end{equation}
where $\varphi_R: a\lto (a,0)$ and $\varphi_\ZZ: (a,k)\lto k$ is an exact sequence of rings.
\item Any ideal ring extension
\begin{equation}\label{seq.z}
\xymatrix{0 \ar[r] & R \ar[r]^-{\psi_R} & S \ar[r]^-{\psi_\ZZ} & \ZZ \ar[r] & 0.}
\end{equation}
is equivalent to an extension of type \eqref{seq.inf}.
\end{blist}
\item If the abelian group $(R,+)$ has a finite exponent $N$, then:
\begin{blist}
\item  The abelian subgroup 
$
I_N = \{0\}\times N\ZZ$ of $R\times \ZZ$ 
is an ideal in $R(\sigma,s)$. The quotient ring $R(\sigma,s)/I_N$  is denoted by $R^c(\sigma,s)$.
\item 
The sequence
\begin{equation}\label{seq.fin}
\xymatrix{0 \ar[r] & R \ar[r]^-{\varphi_R^c} & R^c(\sigma,s) \ar[r]^-{\varphi^c_{\ZZ_N}} & \ZZ_N \ar[r] & 0,}
\end{equation}
where $\varphi^c_R: a\lto (a,0)+ I_N$ and $\varphi^c_{\ZZ_N}: (a,k) +I_N \lto k\!\!\mod\! N$ is an exact sequence of rings.
\end{blist}
\item The set
$
\{(a,1)\;|\; a\in R\} \subseteq R(\sigma,s)
$
(resp.\ $\{(a,1)+I_N\;,|\; a\in R\} \subseteq R^c(\sigma,s)$ in the finite exponent case)
is a sub-truss and a paragon of $\tT(R(\sigma,s))$  (resp.\ $\tT(R^c(\sigma,s))$ in the finite exponent case).
\end{zlist}
\end{theorem}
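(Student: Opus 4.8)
I would take the three numbered parts in turn, with the real work concentrated in the associativity of \eqref{prod.Z} (part (1)(a)) and in the reconstruction of part (1)(c). For part (1)(a), distributivity is immediate, since each summand of \eqref{prod.Z} is separately additive in $(a,k)$ and in $(b,l)$; the substance is associativity, which I would check by expanding $\big((a,k)(b,l)\big)(c,m)$ and $(a,k)\big((b,l)(c,m)\big)$. The $\ZZ$-components agree trivially (both equal $klm$), so one compares only the $R$-components. Here the axioms of Definition~\ref{def.homothetism} are exactly what keeps the bookkeeping finite: \eqref{h.linear} and \eqref{h.assoc} make mixed words such as $a\sigma c$ and $\sigma b\sigma$ bracket-free, while \eqref{h.comm} legitimises $a\sigma^2$. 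After deleting the monomials that coincide verbatim, the surviving discrepancy consists precisely of terms involving $a\sigma^2$, $\sigma^2 c$, and a single pair $s\sigma$ against $\sigma s$; the first two are reconciled by substituting \eqref{h.sig} in the form $a\sigma^2=a\sigma+as$ and $\sigma^2 c=\sigma c+sc$ (using \eqref{inner.hom}), and the last by the normalisation \eqref{h.norm}. I expect this term-matching to be the main technical obstacle, since it is where both hypotheses are genuinely consumed; I would record only the residual monomials rather than the full expansion.

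Parts (1)(b), (2)(a) and (2)(b) are routine verifications. In part (1)(b), $\varphi_R$ and $\varphi_\ZZ$ are plainly additive and multiplicative, and $\ker\varphi_\ZZ=\{(a,0)\}=\im\varphi_R$ gives exactness. For part (2), $N$ being the exponent of $(R,+)$ forces $N(a\sigma)=0$ and $Ns=0$, so \eqref{prod.Z} shows at once that $(a,k)(0,Nl)$ and $(0,Nl)(a,k)$ lie in $\{0\}\times N\ZZ=I_N$; hence $I_N$ is a two-sided ideal and the quotient $R^c(\sigma,s)$ is defined. The induced sequence \eqref{seq.fin} is then exact by reading components modulo $N$, the only point being $\ker\varphi^c_{\ZZ_N}=\im\varphi^c_R$, which holds because $(a,Nm)+I_N=(a,0)+I_N$.

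The conceptual core is part (1)(c). Given an arbitrary extension \eqref{seq.z}, I would exploit freeness of $\ZZ$ to split it additively: choosing $q\in S$ with $\psi_\ZZ(q)=1$ gives $S=\psi_R(R)\oplus\ZZ q$ as abelian groups. Since $\psi_R(R)=\ker\psi_\ZZ$ is an ideal, both $q\psi_R(a)$ and $\psi_R(a)q$ return to $\psi_R(R)$, which defines a double operator $\sigma$ by $\psi_R(\sigma a)=q\psi_R(a)$ and $\psi_R(a\sigma)=\psi_R(a)q$; the multiplicativity of $\psi_R$ together with associativity in $S$ then delivers \eqref{h.linear}, \eqref{h.assoc} and \eqref{h.comm}, so that $\sigma\in\Pi(R)$. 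Defining $s$ by $q^2=q+\psi_R(s)$ (legitimate since $\psi_\ZZ(q^2-q)=0$), the relation $q^2\psi_R(a)=(q+\psi_R(s))\psi_R(a)$ unpacks to \eqref{h.sig}, while $q\psi_R(s)=q(q^2-q)=(q^2-q)q=\psi_R(s)q$ unpacks to \eqref{h.norm}; thus $(\sigma,s)$ is an admissible datum for part (1). Finally $\Theta(a,k)=\psi_R(a)+kq$ is an additive isomorphism commuting with the extension maps, and substituting $q\psi_R(b)=\psi_R(\sigma b)$, $\psi_R(a)q=\psi_R(a\sigma)$ and $q^2=q+\psi_R(s)$ into $\Theta(a,k)\Theta(b,l)$ reproduces $\Theta\big((a,k)(b,l)\big)$, so $\Theta$ is a ring isomorphism realising the equivalence \eqref{ext.equiv}. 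The obstacle here is bureaucratic rather than deep: one must verify \emph{all} of \eqref{h.linear}, \eqref{h.assoc}, \eqref{h.comm} for $\sigma$ and check that $s$ satisfies both \eqref{h.norm} and \eqref{h.sig}.

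Part (3) then requires no fresh computation: I would apply Proposition~\ref{prop.truss.ext} to the ideal extension \eqref{seq.inf} with the element $q=(0,1)$, whence $q+\varphi_R(R)=\{(a,1)\mid a\in R\}$ is a sub-heap and a paragon by part (1) of that proposition. By part (2) of the proposition it is a sub-truss exactly when \eqref{q.cond} holds, and \eqref{prod.Z} gives $q^2=(s,1)$, so $q^2-q=(s,0)=\varphi_R(s)\in\varphi_R(R)$; hence \eqref{q.cond} is met. The finite-exponent case is verbatim the same, applied to \eqref{seq.fin} with $q=(0,1)+I_N$, for which $q^2-q=\varphi^c_R(s)\in\varphi^c_R(R)$.
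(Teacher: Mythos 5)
Your proposal is correct and follows essentially the same route as the paper: direct term-matching for associativity in (1)(a) with \eqref{h.sig} and \eqref{h.norm} consumed exactly where you say, the additive splitting of \eqref{seq.z} via a preimage $q$ of $1$ to extract the homothetic datum and the isomorphism $\Theta(a,k)=\psi_R(a)+kq$ for (1)(c), and an appeal to Proposition~\ref{prop.truss.ext} with $q=(0,1)$ for part (3). No substantive differences to report.
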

\begin{proof}
That multiplication \eqref{prod.Z} makes $R\times \ZZ$ into an associative ring can be checked by direct calculations that use the double homothetism rules in Definition~\ref{def.homothetism} and equations \eqref{h.ext.cond}. Explicitly, for all $a,b,c\in R$ and $k,l,m\in \ZZ$,
$$
\begin{aligned}
((a,k)(b,l))(c,m) &= \left(ab +l a\sigma + k\sigma b + kls, kl\right)(c,m)\\
&= (abc + l a\sigma  c + k\sigma bc + klsc + mab \sigma +lm a\sigma^2 \\
& + km\sigma b\sigma  + klm s\sigma +kl\sigma c + klms, klm)\\
&= (abc + l a\sigma  c + k\sigma bc + klsc + mab \sigma +lm a\sigma + lmas \\
& + km\sigma b\sigma + klm \sigma s +kl\sigma c + klms, klm)\\
&= (a,k) \left(bc +m b\sigma + l\sigma c + lms, lm\right) = (a,k)((b,l)(c,m)). 
\end{aligned}
$$
Hence the multiplication is associative. The distributive laws follow from the additivity of $\sigma$ and distributive laws in $R$ and $\ZZ$.
The statement (1b) follows immediately from the definition of $R(\sigma,s)$. 

To prove (1c) first note that the sequence \eqref{seq.z} splits as a sequence of abelian groups, and hence there is  the following diagram with exact rows,
\begin{equation}\label{seq.z.split}
\xymatrix{0 \ar[r] & R \ar@<+0.5ex>[r]^-{\psi_R} & S \ar@<+0.5ex>[l]^-{\zeta}\ar@<+0.5ex>[r]^-{\psi_\ZZ}\ar@(rd,dl)^{\bar\pi} \ar@(lu,ur)^\pi & \ZZ \ar@<+0.5ex>[l]^-{\kappa}\ar[r] & 0,}
\end{equation}
in which $\kappa, \zeta, \pi$ and $\bar\pi$ are additive maps such that
$$
\psi_\ZZ\circ\kappa = \id_\ZZ, \quad \pi= \kappa\circ \psi_\ZZ, \quad \bar\pi = \id_S-\pi = \psi_R\circ\zeta, \quad \zeta\circ\psi_R = \id_R.
$$
For any $q\in  {\varphi_\ZZ}^{-1}(1)$, define a double operator $\sigma$ and $s\in R$, by
\begin{equation}\label{datum}
\sigo a = \zeta(q\psi_R(a)), \qquad a\sigo= \zeta(\psi_R(a)q), \qquad s=\zeta(q^2-q),
\end{equation}
for all $a\in R$. 
Observe that,  for all $a\in R$,
$$
\pi(q^2 - q) =\pi(q\psi_R(a))  = \pi(\psi_R(a)q) =0,
$$
by the exactness of the sequence \eqref{seq.z}. 
Hence equations \eqref{datum} can be equivalently written as
\begin{equation}\label{datum'}
\psi_R(\sigo a)= q\psi_R(a), \qquad \psi_R(a\sigo ) = \psi_R(a)q, \qquad \psi_R(s)=q^2-q.
\end{equation}
Using \eqref{datum'} and the fact that $\psi_R$ is an injective ring homomorphism, one easily finds that conditions \eqref{h.ext.cond} are satisfied for $\sigma$ and $s$ defined by \eqref{datum}, and so there is an exact sequence such as \eqref{seq.inf}.

Again a simple calculation aided by \eqref{datum'}  and the ring monomorphism property of $\varphi_R$ yields that 
$$
\Theta: R(\sigma,s)\lra S, \qquad (a,n)\lto \psi_R(a)+nq, 
$$ 
 is an isomorphism of rings with the inverse $\Theta^{-1}(x) = (\zeta(x), \psi_\ZZ(x))$, which provides one with the required equivalence of extensions.

If $(R,+)$ has a finite exponent $N$, then, for all $a\in R$ and $k,l\in \ZZ$,
$$
(a,k)(0,Nl) = \left(Nl a\sigma + klNs, klN\right) = (0,klN) \in I_N,
$$
and similarly for the right multiplication. Hence $I_N$ is an ideal in $R(\sigma, s)$. 

In view of the definition of $I_N$, the map $\varphi^c_R$ is injective. The kernel of $\varphi^c_{\ZZ_N}$ consists of all elements of the form $(a,0) +I_N$, i.e.\ of the whole of the image of $\varphi^c_R$. The map $\varphi^c_{\ZZ_N}$ is obviously surjective. This proves the exactness of the sequence \eqref{seq.fin}.

Finally, since $(0,1)^2 = (0,1) + (s,0)$ in both cases (modulo $I_N$ in the finite exponent case), these extensions satisfy assumptions of Proposition~\ref{prop.truss.ext} with $q=(0,1)$ (or $q=(0,1)+I_N$ in the cyclic case). The stated subsets are of the form $q + \varphi_R(R)$ and hence they are  trusses and paragons, as claimed. This  completes the proof of the theorem.
\end{proof}

\begin{definition}\label{def.h.ext}
Let $R$ be a ring.
\begin{zlist}
\item A pair $(\sigma,s)$, where $\sigma \in \Pi(R)$ and $s\in R$ satisfying conditions \eqref{h.ext.cond} is called a {\em homothetic datum} on $R$.
\item Let $(\sigma,s)$ be a homothetic datum on $R$. 
\begin{rlist}
\item The extension $(\varphi_R,R(\sigma,s),\varphi_\ZZ)$ given by the sequence \eqref{seq.inf} is  called an {\em integral} or {\em infinite homothetic extension}.
\item  The extension $(\varphi^c_R,R^c(\sigma,s),\varphi^c_{\ZZ_N})$ given by the sequence \eqref{seq.fin} is  called a {\em cyclic} or {\em finite homothetic extension}.
\end{rlist}
\item The image under the canonical (projection) isomorphism $\hH(R)\times\{1\} \lra \hH(R)$ of the truss considered in assertion (3) of Theorem~\ref{thm.h.ext} is called  a {\em homothetic truss} on $R$ and is denoted by  $\tT(\sigma,s)$.
\end{zlist}
\end{definition}

Note that $\tT(\sigma,s)$ is isomorphic to $R$ as a heap and it is a truss since $1$ is an idempotent in $\ZZ$ (or $\ZZ_N$). Explicitly, the multiplication $\di$ in $\tT(\sigma,s)$  is given by
$$
a\di b = ab+a\sigma+\sigma b +s,
$$
for all $a,b\in R$. In particular, for the trivial homothetic datum $(0,0)$ on $R$, $\tT(0,0) = \tT(R)$, the truss associated to the ring $R$. Next, we  identify isomorphism classes of homothetic trusses $\tT(\sigma,s)$.

\begin{lemma}\label{lem.iso.tr}
For all ring automorphisms $\Phi$ of $R$ and all $v\in R$, $\tT(\sigma,s)\cong \tT(\sigma',s')$, where
\begin{equation}\label{trans}
s' = \Phi(s+ v +v^2 - v\sigma - \sigma v) \quad \&\quad  \sigma'=\Phi^*(\sigma - \bar{v}),
\end{equation}
where $\Phi^*$ is the induced bijection on $\Pi(R)$ defined in Lemma~\ref{lem.h.aut}.
\end{lemma}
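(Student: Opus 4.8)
The plan is to build the required isomorphism as a composite of two elementary moves: a translation $\tau_v\colon a\mapsto a+v$, which absorbs the element $v\in R$, followed by the ring automorphism $\Phi$ itself, so that the final map is $F(a)=\Phi(a+v)$. Recall from Definition~\ref{def.h.ext} that the multiplication of $\tT(\sigma,s)$ on the heap $\hH(R)$ is $a\di b = ab + a\sigma + \sigma b + s$. Both $\tau_v$ and $\Phi$ are heap automorphisms of $\hH(R)$ (the first because it is affine, the second because $\Phi$ is additive), so in each step the only thing to verify is that the map intertwines the two truss multiplications. I note at the outset that $\sigma-\bar v\in\Pi(R)$ (as observed after Definition~\ref{def.homothetism}) and that $\Phi^*$ preserves $\Pi(R)$ by Lemma~\ref{lem.h.aut}, whence $\sigma'=\Phi^*(\sigma-\bar v)\in\Pi(R)$; that the pair $(\sigma',s')$ genuinely satisfies the homothetic-datum conditions \eqref{h.ext.cond}, so that $\tT(\sigma',s')$ is defined, is a routine direct check (alternatively it follows by transporting the truss structure of $\tT(\sigma,s)$ along the bijections below).

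First I would treat the translation, i.e. the case $\Phi=\id$. Writing $\sigma_v=\sigma-\bar v$ and $s_v = s+v+v^2-v\sigma-\sigma v$, the claim is that $\tau_v$ is a truss isomorphism $\tT(\sigma,s)\to\tT(\sigma_v,s_v)$. With $\di'$ denoting the multiplication of the target, the verification amounts to expanding $\tau_v(a)\di'\tau_v(b)=(a+v)(b+v)+(a+v)\sigma_v+\sigma_v(b+v)+s_v$ using the inner-homothetism rules $x\bar v = xv$ and $\bar v x = vx$ of \eqref{inner.hom}, and checking that it equals $\tau_v(a\di b)=ab+a\sigma+\sigma b+s+v$. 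Every quadratic term ($v^2$) and every mixed linear term ($av$, $vb$, $v\sigma$, $\sigma v$) cancels, and the chosen form of $s_v$ is exactly what these cancellations demand, leaving $ab+a\sigma+\sigma b+s+v$.

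Next I would treat the automorphism move with $v=0$: for an arbitrary homothetic datum $(\tau,t)$ the map $\Phi$ is a truss isomorphism $\tT(\tau,t)\to\tT(\Phi^*(\tau),\Phi(t))$. Here the key input is Lemma~\ref{lem.h.aut}, whose defining formulas \eqref{adj} give $\Phi(a)\,\Phi^*(\tau)=\Phi(a\tau)$ and $\Phi^*(\tau)\,\Phi(b)=\Phi(\tau b)$; combined with $\Phi(ab)=\Phi(a)\Phi(b)$ this yields $\Phi(a)\di'\Phi(b)=\Phi\bigl(ab+a\tau+\tau b+t\bigr)=\Phi(a\di b)$, as needed.

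Finally I would compose. Applying the automorphism move to the datum $(\sigma_v,s_v)$ produced by the translation yields the target $\bigl(\Phi^*(\sigma_v),\Phi(s_v)\bigr)=(\sigma',s')$ exactly as in \eqref{trans}, and the composite $F(a)=\Phi(a+v)$ is the desired isomorphism $\tT(\sigma,s)\to\tT(\sigma',s')$. I expect the main obstacle to be purely the bookkeeping in the translation step: one must keep careful track of the left action $a\sigma$ versus the right action $\sigma b$, and of $a\bar v=av$ versus $\bar v b=vb$, so that the cross terms cancel in precisely the pattern that forces the stated value of $s_v$; once that is in hand, the automorphism step is a routine application of Lemma~\ref{lem.h.aut}.
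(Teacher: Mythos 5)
Your proposal is correct and takes essentially the same route as the paper: the isomorphism is the very same map $a\mapsto\Phi(a+v)$ (called $\Phi_v$ there), and your verification that it intertwines the two multiplications is the paper's single computation merely factored into a translation step followed by an automorphism step. The only substantive difference is that the paper carries out in full the check that $(\sigma',s')$ satisfies the homothetic-datum conditions \eqref{h.ext.cond} — this occupies roughly half of the published proof — whereas you defer it as a routine direct calculation; your identification of exactly what must be checked there (and of the alternative transport-of-structure argument) is accurate.
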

\begin{proof}
First we need to show that the pair $(\sigma',s')$ is a homothetic datum on $R$. Using the fact that  $(\sigma,s)$ is a homothetic datum, we can compute, 
$$
\begin{aligned}
(s+ v +v^2 &- v\sigma - \sigma v)(\sigma - \bar{v})\\
&= s\sigma  +v^2\sigma+v\sigma -v\sigma^2 -\sigma v\sigma -sv - v^2 -v^3 -v\sigma v +\sigma v^2\\
&=\sigma s  +v^2\sigma  -vs  -\sigma v\sigma -sv - v^2 -v^3 -v\sigma v +\sigma v^2\\
&= \sigma s  +v^2\sigma   -\sigma v\sigma -vs -\sigma^2 v + \sigma v- v^2 -v^3 -v\sigma v +\sigma v^2\\
&= (\sigma - \bar{v})(s+ v +v^2 - v\sigma - \sigma v).
\end{aligned}
$$
Therefore,
$$
\begin{aligned}
s'\sigma' &=  \Phi(s+ v +v^2 - v\sigma - \sigma v)\Phi^*(\sigma - \bar{v})\\
&= \Phi\left((s+ v +v^2 - v\sigma - \sigma v)(\sigma - \bar{v})\right)\\
&= \Phi\left((\sigma - \bar{v})(s+ v +v^2 - v\sigma - \sigma v)\right)\\
&=\Phi^*(\sigma - \bar{v})\Phi(s+ v +v^2 - v\sigma - \sigma v) = \sigma's'.
\end{aligned}
$$
Hence the condition \eqref{h.norm} holds for $s'$ and $\sigma'$. Next, using the fact that $\Phi^*$ preserves both addition and multiplication, property \eqref{adj.inner} and that $\sigma$ and $s$ satisfy \eqref{h.sig} we compute,
$$
\begin{aligned}
{\sigma'}^2 &= \Phi^*(\sigma - \bar{v})^2 =  \Phi^*\left((\sigma - \bar{v})(\sigma - \bar{v})\right) 
\\
&=  \Phi^*\left(\sigma +\bar s - \bar v\sigma - \sigma \bar v + \bar{v}^2\right)\\
&= \Phi^*\left(\sigma -\bar v\right) + \Phi^*\left(\overline{s - v\sigma - \sigma v +v+ {v}^2}\right)\\
&= \sigma' + \overline{\Phi(s - v\sigma - \sigma v +v+ {v}^2)} = \sigma' + \overline{s'},
\end{aligned}
$$
as required. Therefore, there is a homothetic extension $R(\sigma',s')$ and the corresponding truss $\tT(\sigma',s')$.

Consider the map
$$
\Phi_v: R\lra R, \qquad a\lto  \Phi(a+v).
$$
Since $\Phi$ is a ring automorphism  $\Phi_v$ is an automorphism of the heap $\hH(R)$. Furthermore, for all $a,b\in R$,
$$
\begin{aligned}
\Phi_v(a)\Phi_v(b) +\Phi_v(a)\sigma' +\sigma' \Phi_v(b)&+s'
= \Phi(a+v)\Phi(b+v) + \Phi\left((a+v)(\sigma -\bar v)\right)\\
&+ \Phi\left((\sigma-\bar v)(b +v)\right) + \Phi(s - v\sigma - \sigma v+v+ {v}^2)\\
&=  \Phi(ab +a\sigma +\sigma b +s +v) = \Phi_v(ab +a\sigma +\sigma b +s).
\end{aligned}
$$
Therefore, $\Phi_v$ transforms multiplication in $\tT(\sigma,s)$ into multiplication in $\tT(\sigma',s')$, and hence is the required truss isomorphism $\tT(\sigma,s)\cong \tT(\sigma',s')$.
\end{proof}

Note in passing that the second of conditions \eqref{trans} is equivalent to the statement that the outer bitranslations $\xi(\sigo), \xi(\sigo')\in \Xi(R)$ are equivalent in the sense of Definition~\ref{def.out.equiv}.

\begin{lemma}\label{lem.h.aut.rev}
Let $R(\sigma,s)$  and $R(\sigma',s')$ be homothetic extensions of $R$ such that $\tT(\sigma,s)\cong \tT(\sigma',s')$. Then there exists a ring automorphism $\Phi$ of $R$ and an element $v\in R$ such that the relations \eqref{trans} hold.
\end{lemma}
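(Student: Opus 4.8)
The plan is to show that any truss isomorphism $f\colon \tT(\sigma,s)\lra \tT(\sigma',s')$ is forced to have exactly the shape of the map $\Phi_v$ constructed in Lemma~\ref{lem.iso.tr}, and then to read off $\Phi$ and $v$ from it. Since $\tT(\sigma,s)$ and $\tT(\sigma',s')$ carry the \emph{same} underlying heap $\hH(R)$ (Definition~\ref{def.h.ext}), $f$ is in particular a heap automorphism of $\hH(R)$. Writing $w=f(0)$, the map $\Phi\colon a\lto f(a)-w$ is additive and bijective, because $f$ preserves $[a,b,c]=a-b+c$ and $a+b=[a,0,b]$; hence $f(a)=\Phi(a)+w$ with $\Phi$ an additive automorphism of $(R,+)$.

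The second step is to substitute this into the multiplicativity condition $f(a\di b)=f(a)\di' f(b)$, where $a\di b=ab+a\sigma+\sigma b+s$ and $a\di' b=ab+a\sigma'+\sigma' b+s'$ (Definition~\ref{def.h.ext}). Expanding both sides produces an identity in $a,b$ every summand of which is biadditive in $(a,b)$, additive in $a$ alone, additive in $b$ alone, or constant. Because $\Phi$ is additive and multiplication is $\ZZ$-bilinear, both $\Phi(ab)$ and $\Phi(a)\Phi(b)$ are biadditive, so these homogeneous parts separate cleanly: evaluating at $a=b=0$, then at $b=0$, then at $a=0$, and forming the appropriate differences isolates the four relations
\begin{align*}
&\Phi(ab)=\Phi(a)\Phi(b), \qquad \Phi(a\sigma)=\Phi(a)\sigma'+\Phi(a)w,\\
&\Phi(\sigma b)=\sigma'\Phi(b)+w\Phi(b), \qquad \Phi(s)+w=w^2+w\sigma'+\sigma' w+s'.
\end{align*}
The first says $\Phi$ is multiplicative, so with additivity and bijectivity $\Phi$ is a ring automorphism of $R$; this is the structural heart of the argument.

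Finally, set $v:=\Phi^{-1}(w)$, so that $\Phi(v)=w$. Using the description \eqref{adj} of $\Phi^*$ together with $\bar v\,x=vx$ from \eqref{inner.hom}, I would compute the right and left components of $\Phi^*(\sigma-\bar v)$; substituting $\Phi^{-1}(a)$ into the two intertwining relations above and invoking $\Phi$ multiplicative, each component collapses to that of $\sigma'$, giving $\sigma'=\Phi^*(\sigma-\bar v)$, the second equation of \eqref{trans}. For the first equation I would expand $\Phi(s+v+v^2-v\sigma-\sigma v)$ using $\Phi(v^2)=w^2$ and evaluate the two intertwining relations at $a=v$ and $b=v$; this rewrites the expression as $\Phi(s)+w-w^2-w\sigma'-\sigma' w$, which equals $s'$ by the constant relation. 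The one genuine obstacle is justifying the degree separation in the middle step (legitimate precisely because $\Phi$ is additive and the products are biadditive); everything afterward is careful bookkeeping in which one must keep $\sigor$, $\sigol$, and ordinary products of $R$ distinct.
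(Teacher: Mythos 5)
Your proposal is correct and follows essentially the same route as the paper: peel off the translation by setting $\Phi(a)=f(a)-f(0)$, substitute into the multiplicativity identity, and isolate the relations \eqref{trans} by evaluating at $a=b=0$, $a=0$ and $b=0$ (your $v=\Phi^{-1}(w)$ coincides with the paper's $v=-\Psi^{-1}(0)$ since $\Phi(v)=\Psi(0)$). The only real difference is organizational and slightly to your credit: you work with $f$ rather than $f^{-1}$ and extract $\Phi(ab)=\Phi(a)\Phi(b)$ as the biadditive component of the same separation, whereas the paper defers the multiplicativity of $\Phi$ to an analogy with Lemma~\ref{lem.iso.tr}; note also that the separation needs nothing beyond additivity of $\Phi$ and evaluation-plus-subtraction, so the ``genuine obstacle'' you flag is not one.
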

\begin{proof}
Let $\Psi:\tT(\sigma,s) \lra \tT(\sigma',s')$ be a truss isomorphism. 
Define 
$$
\Phi: R\lra R, \quad a\lto \Psi(a) -\Psi(0).
$$
Since $\Psi$ is a heap homomorphism, $\Phi$ is an additive map, as, for all $a,b\in R$,
$$
\Phi(a+b) = \Psi(a - 0 +b) - \Psi(0) = \Psi(a) -\Psi(0) + \Psi(b) -\Psi(0) = \Phi(a)+\Phi(b).
$$
Clearly, $\Phi$ is an automorphism of abelian groups with the inverse $\Phi^{-1}(a) = \Psi^{-1}(a) - \Psi^{-1}(0)$. Set
$$
v = -\Psi^{-1}(0) \in R.
$$
Note that, since $\Psi$ respects the ternary operation, for all $a\in R$,
\begin{equation}\label{psi.a.v}
\Phi(a +v) = \Psi(a - \Psi^{-1}(0) +0) - \Psi(0) = \Psi(a) -\Psi(\Psi^{-1}(0)) +\Psi(0) -\Psi(0) = \Psi(a).
\end{equation}
Since $\tPsi$ is an isomorphism of trusses, for all $a,b\in R$,
\begin{equation}\label{psi.hom}
\Psi^{-1}(a)\Psi^{-1}(b) +\Psi^{-1}(a)\sigma + \sigma\Psi^{-1}(b) +s = \Psi^{-1}(ab +a\sigma' +\sigma' b +s').
\end{equation}
Evaluating the equality \eqref{psi.hom} at $a=b=0$ we obtain,
$$
\Psi^{-1}(s') = s + \Psi^{-1}(0)\sigma + \sigma \Psi^{-1}(0)  + \Psi^{-1}(0)\Psi^{-1}(0)
$$
That is,
\begin{equation}\label{s.psi}
s' = \Psi(s - v\sigma - \sigma v + v^2) = \Phi(s - v\sigma - \sigma v + v+ v^2),
\end{equation}
where the last equality follows by \eqref{psi.a.v}. Therefore, the first of conditions \eqref{trans} holds. Next, setting $a=0$ in \eqref{psi.hom} and using \eqref{s.psi} as well as the fact that $\Psi$ is a homomorphism of heaps, equation \eqref{psi.a.v} and the definitions of $\Phi$,  $\Phi^{-1}$ and $v$ we find
$$
\begin{aligned}
\sigma' b &= \Psi\left(\sigma \Psi^{-1}(b)+\sigma v - v^2 -v - v\Psi^{-1}(b)\right)\\
&=\Phi\left(\sigma \Psi^{-1}(b)+\sigma v - v^2  - v\Psi^{-1}(b)\right)\\
&= \Phi\left(\sigma \Phi^{-1}(b) - v\Phi^{-1}(b)\right) = \Phi^*(\sigma - \bar{v})b.
\end{aligned}
$$
In a similar way by setting $b=0$ in \eqref{psi.hom} one finds that $a\sigma' = a\Phi^*(\sigma - \bar{v})$. Put together these prove that 
$$
\sigma' = \Phi^*(\sigma - \bar{v}),
$$
as required. The proof that $\Phi$ respects multiplication in $R$ follows the same lines as the proof that $\Phi_v$ is a truss homomorphism in Lemma~\ref{lem.iso.tr}
\end{proof}

Our next task is to connect the correspondences between ring and truss isomorphisms described in Lemma~\ref{lem.iso.tr}  and Lemma~\ref{lem.h.aut.rev} with equivalences of homothetic extensions.  Before we do this, however, we would like to make the following observation. Any endomorphism of abelian groups has at least one fixed point, so, in particular, any translation by an element that is not identity, i.e.\ the function $a\lto a+e$, for a fixed $e\neq 0$, is not a group endomorphism. Consequently, there are no translation ring endomorphisms other than the identity. In contrast, given a heap $H$, for any two elements $e,f\in H$, the {\em translation map}
\begin{equation}\label{swap}
\tau_{e}^{f}:H\lra H,\qquad a\lto [a,e,f],
\end{equation}
is a heap automorphism. This leads us to the following definition. 

\begin{definition}
Let $T$ and $T'$ be trusses on the same heap. We say that $T$ and $T'$ are {\it translationally isomorphic}  
if there exist elements $e,f$ such that the translation heap automorphism $\tau_{e}^{f}$ 
is an isomorphism of trusses. In that case we write $T\treq T'$.
\end{definition}

Translationally isomorphic trusses turn out   to play a key role in the study of equivalence classes of homothetic ring extensions.

\begin{theorem}\label{thm.h.equiv}
For any ring $R$,
\begin{zlist}
\item Two homothetic extensions of $R$ are weakly equivalent if and only if the corresponding trusses are isomorphic.
\item Two homothetic extensions of $R$ are equivalent if and only if the corresponding trusses differ by a translation.
\end{zlist}
\end{theorem}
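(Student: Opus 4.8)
The plan is to reduce both assertions to the purely algebraic description of truss isomorphisms already secured in Lemma~\ref{lem.iso.tr} and Lemma~\ref{lem.h.aut.rev}, by first pinning down the exact shape of the ring isomorphisms that can implement a (weak) equivalence of homothetic extensions. Throughout I would work with the infinite extensions $R(\sigma,s)$ and $R(\sigma',s')$; the cyclic case is identical after passing to the quotients by $I_N$, since comparing an infinite extension with a cyclic one cannot occur (the two share the same quotient $Z$, and $\ZZ\not\cong\ZZ_N$).

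First I would determine the form of an implementing map. If $\Theta\colon R(\sigma,s)\to R(\sigma',s')$ realises a weak equivalence with accompanying automorphism $\Theta_R$ of $R$, then commutation with the inclusions forces $\Theta(a,0)=(\Theta_R(a),0)$, commutation with the projections onto $\ZZ$ forces the second coordinate of $\Theta(a,k)$ to be $k$, and additivity of $\Theta$ forces $\Theta(0,k)=(kw,k)$, where $w$ is the first coordinate of $\Theta(0,1)$. Hence necessarily
$$
\Theta(a,k)=(\Theta_R(a)+kw,\,k),
$$
with $\Theta_R=\id$ in the case of an honest (non-weak) equivalence.

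The heart of the argument is then to expand the multiplicativity requirement $\Theta((a,k)(b,l))=\Theta(a,k)\Theta(b,l)$ using the product \eqref{prod.Z} in both rings and to compare the coefficients of $1$, $k$, $l$ and $kl$. I expect the coefficients of $k$ and of $l$ to give, after invoking the definition of $\Theta_R^*$ from Lemma~\ref{lem.h.aut} together with \eqref{adj.inner}, the relation $\sigma'=\Theta_R^*(\sigma)-\bar w=\Theta_R^*(\sigma-\bar v)$ with $v:=\Theta_R^{-1}(w)$, while the coefficient of $kl$ should collapse — after substituting this relation and using $w\,\Theta_R^*(\sigma)=\Theta_R(v\sigma)$ and $\Theta_R^*(\sigma)\,w=\Theta_R(\sigma v)$ — to exactly $s'=\Theta_R(s+v+v^2-v\sigma-\sigma v)$. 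Thus a map of the displayed form is a ring isomorphism precisely when the pair $(\Theta_R,v)$ satisfies the relations \eqref{trans}, and conversely every pair $(\Phi,v)$ satisfying \eqref{trans} yields, via $w:=\Phi(v)$, such an isomorphism. The one genuinely delicate point is the bookkeeping of the left- and right-operator components of the double homothetisms, but this is mechanical once one consistently reads $a\sigma$ as the left-operator and $\sigma b$ as the right-operator component and uses $w\bar w=w^2=\bar w w$.

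With this correspondence established, both parts follow. For (1), a weak equivalence exists if and only if \eqref{trans} holds for some ring automorphism $\Phi$ and some $v\in R$, and by Lemma~\ref{lem.iso.tr} together with Lemma~\ref{lem.h.aut.rev} this is exactly the condition $\tT(\sigma,s)\cong\tT(\sigma',s')$. For (2), an honest equivalence corresponds to the subcase $\Theta_R=\Phi=\id$; in that case the induced truss isomorphism from Lemma~\ref{lem.iso.tr} is $\Phi_v\colon a\mapsto a+v=[a,0,v]=\tau_{0}^{v}(a)$, a translation in the sense of \eqref{swap}, so the trusses are translationally isomorphic. Conversely, applying Lemma~\ref{lem.h.aut.rev} to a translation $\tau_e^f$ returns $\Phi=\id$, which through the correspondence produces an honest equivalence. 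Hence $\tT(\sigma,s)\treq\tT(\sigma',s')$ if and only if the two homothetic extensions are equivalent, completing the proof.
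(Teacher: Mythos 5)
Your proposal is correct and follows essentially the same route as the paper: it pins down the forced form $\Theta(a,k)=(\Theta_R(a)+kw,k)$ from the commuting diagrams, reduces multiplicativity of $\Theta$ to the relations \eqref{trans} via the product formula \eqref{prod.Z}, and then invokes Lemma~\ref{lem.iso.tr} and Lemma~\ref{lem.h.aut.rev} to translate between these relations and (translational) isomorphisms of the trusses. The only difference is organizational: you package the coefficient comparison as a single equivalence, whereas the paper runs the two directions separately, deriving the induced truss isomorphism directly from the restriction of $\Theta$ in one direction and verifying multiplicativity of the explicitly defined $\Theta$ in the other.
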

\begin{proof}
For statement (1) we observe that if homothetic extensions $(\varphi_R, R(\sigma,s),\varphi_\ZZ)$ and $(\varphi'_R,R(\sigma',s'),\varphi'_\ZZ)$    are weakly equivalent by $\Theta$ and $\Theta_R$, then :
$$
\Theta \circ \varphi_R=\varphi'_R\circ\Theta_R\qquad \text{implies}\qquad \Theta(a,0)=(\Theta_R(a),0),
$$
$$
 \varphi_\ZZ=\varphi'_\ZZ\circ\Theta\qquad \text{implies}\qquad \Theta(0,1)=(e,1),
$$
for all $a\in R$ and some $e\in R$. Hence
$$
\Theta(a,1)=\Theta(a,0)+\Theta(0,1)=(\Theta_R(a)+e,1),
$$
i.e., the ring isomorphism $\Theta$ induces an isomorphism of trusses 
$$
\Theta_T: T(\sigma,s)\lra T(\sigma',s'), \qquad a \lto \Theta_R(a) +e.
$$

In the opposite direction, observe that Lemma~\ref{lem.iso.tr} and Lemma~\ref{lem.h.aut.rev} establish a bijective correspondence between isomorphisms of trusses $\tT(\sigma,s)$ and systems $(\Phi, v)$. Let us assume that $\tT(\sigma,s)\cong \tT(\sigma',s')$ and let $\Phi$ be the corresponding automorphism of $R$ and $v$ the corresponding element of $R$. Define
$$
\Theta: R(\sigma,s)\lra R(\sigma',s'), \qquad (a,k)\lto (\Phi(a) +k \Phi(v),k).
$$
Clearly, $\Theta$ is an isomorphism of abelian groups, so we need to check whether it preserves multiplications. Note that, for all $a,b\in R$ and $k,l\in \ZZ$,
$$
\sigma' (\Phi(b)-l\Phi(v)) = \Phi^*(\sigma - \bar v)(\Phi(b)-l\Phi(v)) = \Phi(\sigma b +l\sigma v - vb - lv^2),
$$
and, similarly,
$$
(\Phi(a)-k\Phi(v))\sigma' = \Phi(a\sigma +kv\sigma -av - kv^2).
$$
Therefore, in view of the definition of $s'$ in \eqref{trans},
$$
\begin{aligned}
\Theta(a,k)\Theta(b,l) &= ((\Phi(a) +k\Phi(v))(\Phi(b) +l\Phi(v))\\
&+ k\sigma' (\Phi(b)-l\Phi(v))
+ l(\Phi(a)-k\Phi(v))\sigma' +kls',kl)\\
&= (\Phi(ab +kvb +lav +klv^2 +k(\sigma b +l\sigma v - vb - lv^2)\\
& + l(a\sigma +kv\sigma -av - kv^2) +kl(s+v+v^2 - v\sigma - \sigma v)), kl)\\
&= (\Phi(ab  +k\sigma b 
 + la\sigma   +kls+klv ), kl) = \Theta\left(\left(a,k\right)\left(b,l\right)\right),
\end{aligned}
$$
as needed. Finally, setting $\Theta_R = \Phi$  we obtain the required weak equivalence of infinite homothetic extensions (in the sense of Definition~\ref{def.ring.ext}).

For the statement (2), if $(\varphi_R, R(\sigma,s),\varphi_\ZZ)$ and $(\varphi'_R,R(\sigma',s'),\varphi'_\ZZ)$  are equivalent by $\Theta$, then  we can follow arguments of the proof of (1) with $\Theta_R=\id_R$, to obtain the translational isomorphism of trusses 
$$
\Theta_T(a) =a+e = [a,0,e] = \tau_0^e(a).
$$
Hence $T(\sigma,s)\treq T(\sigma',s')$.

If  $T(\sigma,s)\treq T(\sigma',s')$, then there exists $e\in R$ such that 
$\tau_0^e$ 
is an isomorphism of trusses and so, by the argument of the proof of Lemma~\ref{lem.h.aut.rev} the corresponding isomorphism of rings is $\Phi(a)=\tau_0^e(a)-\tau_0^e(0)=a$, and thus 
$$
\Theta:R(\sigma,s)\lra R(\sigma',s'), \qquad (a,k)\lto (a+kv,k),
$$
is the corresponding equivalence of ring extensions.

The cyclic homothetic extension case is treated in exactly the same way.
\end{proof}

We note in passing that  statement (2) of Theorem~\ref{thm.h.equiv} can be viewed as translational invariance of equivalence classes of ideal ring extensions by $\ZZ$ as modifications of homothetic trusses by a translational isomorphism does not lead one out of the equivalence class of the corresponding extension.

We end this section by studying $\tT(\sigma,s)$ in some special cases or with additional properties.

\begin{proposition}\label{prop.hom.tr}
Let $(\sigma,s)$ be a homothetic datum on a ring $R$. 
\begin{zlist}
\item The truss $\tT(\sigma,s)$ is commutative if and only if $R$ is commutative and $\sigma$ is a central double homothetism, that is, $\sigol=\sigor$.
\item The truss $\tT(\sigma,s)$ has an absorber if and only if $\sigma$ is an inner double homothetism. In this case, the ring retract of  $\tT(\sigma,s)$ is isomorphic to $R$.
\item The truss $\tT(\sigma,s)$ has an identity if and only if $\tT(\sigo,s) \cong \tT(\id,0)$.
\item $R$ has identity if and only if $\tT(\id,0) \cong \tT(R)$. In this case,  $\tT(\sigo,s)\cong \tT(R)$.
\end{zlist}
\end{proposition}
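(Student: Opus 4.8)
The plan is to compute directly with the explicit multiplication $a\di b = ab+a\sigma+\sigma b+s$ carried by the heap $\hH(R)$, reducing each clause to identities in $R$ subject to the datum conditions \eqref{h.norm}--\eqref{h.sig}. For (1), I would test $a\di b=b\di a$ first at $b=0$, where it collapses (the $s$ and the products with $0$ dropping out) to $a\sigma=\sigma a$ for all $a$, i.e.\ $\sigol=\sigor$; substituting this back into the general equality leaves precisely $ab=ba$. The converse is a one-line substitution, so (1) is routine.

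For (2) I would first show that an absorber forces $\sigma$ inner: if $e\di a=a\di e=e$ for all $a$, then subtracting the value at $a=0$ annihilates the constant terms and yields $\sigor(a)=-ea$ and $\sigol(a)=-ae$, that is $\sigma=\overline{-e}$. Conversely, writing $\sigma=\bar c$, the key subtlety is that \eqref{h.sig} only gives $\overline{c^2}=\bar c+\bar s$, so $t:=c^2-c-s$ lies in the two-sided annihilator $\Ann(R)$ rather than vanishing. I would then propose $e:=-c-t\;(=s-c^2)$ as the absorber: from $e+c=-t\in\Ann(R)$ one gets $ea+ca=0=ae+ac$ and $ec=ce=-c^2$, whence $e\di a=a\di e=e$. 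For the retract I would check that $\theta\colon R\to \rR(\tT(\sigma,s);e)$, $a\mapsto a+e$, is a ring isomorphism onto $(\gG(R;e),\di)$: additivity is immediate, and multiplicativity $\theta(a)\di\theta(b)=\theta(ab)$ reduces, using $e+c\in\Ann(R)$ and $e^2=c^2$ (both consequences of $e=-c-t$ with $t\in\Ann(R)$), to $ab$. Equivalently one may apply Lemma~\ref{lem.iso.tr} with $\Phi=\id$, $v=c$ to replace $(\sigma,s)$ by $(0,-t)$ and read off the retract there. This annihilator bookkeeping is the main obstacle: since $\Ann(R)$ need not be trivial, ``$\sigma$ inner'' neither pins down $c$ nor forces $s=c^2-c$, and the absorber and the retract isomorphism must both be corrected by the element $t$.

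For (3) the easy direction observes that $\tT(\id,0)$ carries $a\di b=ab+a+b$, for which $0$ is an identity, so any truss isomorphic to it is unital. For the converse, if $u$ is an identity of $\tT(\sigma,s)$, the same ``subtract the $a=0$ value'' device applied to $u\di a=a$ and $a\di u=a$ gives $\sigor(a)=a-ua$, $\sigol(a)=a-au$ (so $\sigma=\id-\bar u$) and $s=u^2-u$. I would then invoke Lemma~\ref{lem.iso.tr} with $\Phi=\id$, $v=-u$: a short check gives $\sigma'=\sigma+\bar u=\id$ and $s'=s+u-u^2=0$, so $\tT(\sigma,s)\cong\tT(\id,0)$.

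For (4), the stated equivalence follows the same template: if $R$ has an identity $1_R$ then $\overline{1_R}=\id$, and Lemma~\ref{lem.iso.tr} applied to $(\id,0)$ with $v=1_R$, $\Phi=\id$ yields $(0,0)$, i.e.\ $\tT(R)$; conversely $\tT(\id,0)\cong\tT(0,0)$ produces, via Lemma~\ref{lem.h.aut.rev}, automorphism data with $\Phi^*(\id-\bar v)=0$, and bijectivity of $\Phi^*$ forces $\bar v=\id$, i.e.\ $v$ is a two-sided identity of $R$. For the final sentence I would note that once $R$ is unital every bimultiplication is inner: \eqref{h.linear}--\eqref{h.assoc} give $\sigor(a)=(\sigma 1_R)a$, $\sigol(a)=a(1_R\sigma)$ and $\sigma 1_R=1_R\sigma=:c$, so $\sigma=\bar c$. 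Part (2) then supplies an absorber with ring retract isomorphic to $R$, and hence $\tT(\sigma,s)=\tT(\rR(\tT(\sigma,s);e))\cong\tT(R)$ for every datum $(\sigma,s)$.
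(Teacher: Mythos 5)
Your proof is correct and follows essentially the same route as the paper: evaluate the product $a\di b=ab+a\sigma+\sigma b+s$ at $a=0$ or $b=0$ to extract the constraints on $\sigma$ and $s$, then invoke Lemma~\ref{lem.iso.tr} and Lemma~\ref{lem.h.aut.rev} to produce the required isomorphisms. Your explicit tracking of $t=c^2-c-s\in\mathfrak{a}(R)$ in part (2) is a welcome precision (the paper's absorber $-b^2+s$ coincides with your $-c-t$), and routing the final claim of (4) through part (2) rather than through a direct translation by $\sigma 1$ is an equally valid minor variant.
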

\begin{proof}
The truss $\tT(\sigma,s)$ is commutative if, and only if, for all $a,b\in R$,
\begin{equation}\label{comm.tr}
ab + a\sigma +\sigma b = ba +b\sigma +\sigma a.
\end{equation}
Setting either $a=0$ or $b=0$ we obtain the centrality of $\sigma$ and then the commutativity of $R$ follows. In the converse direction \eqref{comm.tr} is obviously satisfied. This proves statement (1).

An element $e\in R$ is an absorber in $\tT(\sigma,s)$ if and only if, for all $a\in R$,
\begin{equation}\label{abs.tr}
ae + a\sigma +\sigma e +s =e \quad \& \quad ea + e\sigma +\sigma a +s = e.
\end{equation}
Setting $a=0$ in \eqref{abs.tr} we obtain that $\sigma e +s =e$ and $e\sigma +s =e$, so plugging these back into \eqref{abs.tr} we conclude that $\sigma = \overline{-e}$. Conversely, if $\sigma = \bar{b}$ for some $b\in R$, then the equality \eqref{h.sig} implies that $\overline{b}^2 = \bar{b} +\bar{s}$ and using this relation one easily checks that equations \eqref{abs.tr} are satisfied with $e=-b^2+s$.

If $e$ is an absorber then, the map $R\lra R$, $r\lto r-e$ is the required isomorphism of rings $\rR(\tT(\sigma,s);e)\cong R$. This completes the proof of statement (2).

An element $u$ is the identity for $\tT(\sigma,s)$, if and only if, for all $a\in R$,
\begin{equation}\label{iden.tr}
au+a\sigma +\sigma u + s = a = ua +u\sigma +\sigma a +s.
\end{equation}
Setting $a=0$ we obtain $s=-\sigma u = - u\sigma$, and thus the existence of the identity $u$ implies that 
$$
a\sigma = a-au \quad \&\quad \sigma a = a - ua.
$$
In other words, $\sigma = \id-\bar{u}$. Now setting $a=u$ in \eqref{iden.tr}, we obtain
$$
0=s+u^2 - u +u\sigma +\sigma u,
$$
and hence $\Phi=\id$ and $v=-u$ induce the required isomorphism of trusses $\tT(\sigo,s) \cong \tT(\id,0)$. In the converse direction, if an automorphism $\Phi$ and element $v$ of $R$ are such that 
$$
\sigma = \Phi^*(\id -\bar{v}) = \id - \overline{\Phi(v)}
$$
and
$$
s = \Phi(v^2 +v-v\sigma -\sigma v) = \Phi(v^2-v),
$$
then $u=\Phi(v)$ is the identity in the truss $\tT(\sigma, s)$. Therefore, statement (3) holds.

Finally, if $R$ has the identity $1$, then every homothetism is inner as $\sigma = \overline{\sigma 1} = \overline{1\sigma}$. Then setting $v = {\sigma 1}$ and $\Phi = \id$ we obtain the required isomorphism of homothetic trusses $\tT(\sigma,s) \cong \tT(0,0) = \tT(R)$. In particular, $\tT(\id,0)\cong \tT(R)$. Conversely, if this last isomorphism holds, then $\id = -\overline{\Phi(v)}$, for some $v\in R$ and an automorphism $\Phi$ of $R$, which means precisely that $-\Phi(v)$ is the identity for $R$.
\end{proof}

\begin{proposition}\label{prop.prod}
Let $A$ be a ring with identity and $B$ be any ring. Then any homothetic truss on the product ring $R=A\times B$ is isomorphic to the product truss $\tT(A)\times \tT(\sigma_B,s_B)$, for some homothetic datum on $B$.
\end{proposition}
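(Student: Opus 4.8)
The plan is to show that the double homothetism underlying any homothetic truss on $R=A\times B$ splits along the product decomposition, with the block on $A$ forced to be inner by the presence of the identity. Throughout I would write $e=(1_A,0)$, a central idempotent of $R$ whose associated decomposition of abelian groups is $R=(A\times 0)\oplus(0\times B)$, with $A\times 0\cong A$ and $0\times B\cong B$ as (possibly non-unital) rings, noting that correspondingly $\hH(R)=\hH(A)\times\hH(B)$. First I would fix a homothetic datum $(\sigma,s)$ on $R$ and analyse the additive maps $\sigor\colon a\mapsto\sigma a$ and $\sigol\colon a\mapsto a\sigma$ on these two summands.

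The key lemma is that both $\sigor$ and $\sigol$ preserve each summand. Preservation of $A\times 0$ is immediate: for $x\in A\times 0$ one has $ex=xe=x$, so \eqref{h.linear} gives $\sigma x=\sigma(ex)=(\sigma e)x$ and $x\sigma=(xe)\sigma=x(e\sigma)$, both lying in $A\times 0$; writing $q\in A$ for the $A$-component of $\sigma e$ (which by \eqref{h.assoc} applied to $a=b=e$ agrees with that of $e\sigma$), the restriction of $\sigma$ to $A\times 0\cong A$ is exactly the inner homothetism $\bar q$. Preservation of $0\times B$ is where the computation bites: for $y\in 0\times B$ one has $ey=ye=0$, and applying \eqref{h.assoc} in the forms $e(\sigma y)=(e\sigma)y$ and $(y\sigma)e=y(\sigma e)$ shows that the $A$-components of $\sigma y$ and $y\sigma$ lie in $(A\times 0)\cap(0\times B)=0$, i.e.\ $\sigma y,\,y\sigma\in 0\times B$. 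Hence $\sigma$ restricts to a double operator $\sigma_B$ on $B$, and the identities \eqref{bimult} together with the self-permutability \eqref{h.comm} are inherited, so $\sigma_B\in\Pi(B)$. Writing $s=(s_A,s_B)$, I would then read off the two components of the datum conditions \eqref{h.ext.cond}: the $A$-components of \eqref{h.norm} and \eqref{h.sig}, using that $A$ is unital so that $\overline{q^2}=\overline{q+s_A}$ forces $q^2=q+s_A$, show that $(\bar q,s_A)$ is a homothetic datum on $A$, while the $B$-components show that $(\sigma_B,s_B)$ is a homothetic datum on $B$.

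Finally I would assemble the truss isomorphism. By the block decomposition just established, the homothetic product $a\di b=ab+a\sigma+\sigma b+s$ computes componentwise as $(a_1,a_2)\di(b_1,b_2)=(a_1\di_A b_1,\,a_2\di_B b_2)$, with $\di_A,\di_B$ the homothetic multiplications of $(\bar q,s_A)$ and $(\sigma_B,s_B)$; together with $\hH(R)=\hH(A)\times\hH(B)$ this yields an identification of trusses $\tT(\sigma,s)=\tT(\bar q,s_A)\times\tT(\sigma_B,s_B)$. Since $\bar q$ is inner, Proposition~\ref{prop.hom.tr}(2) gives that $\tT(\bar q,s_A)$ has an absorber and that its ring retract is isomorphic to $A$; as a truss with an absorber coincides with the truss of its ring retract, $\tT(\bar q,s_A)\cong\tT(A)$, whence $\tT(\sigma,s)\cong\tT(A)\times\tT(\sigma_B,s_B)$, as required. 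I expect the only genuine obstacle to be the block-diagonalisation step: one must check carefully that the cross blocks of $\sigma$ vanish, and it is precisely here that unitality of $A$ — through the central idempotent $e$ and the relation \eqref{h.assoc} — is indispensable, the remaining verifications being routine component-wise restrictions of the already-established identities.
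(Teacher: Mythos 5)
Your proof is correct and follows essentially the same route as the paper: both arguments use the identity of $A$ (via the idempotent $(1_A,0)$) to show that the double operator $\sigma$ is block-diagonal with respect to the decomposition $R=(A\times 0)\oplus(0\times B)$, split the homothetic datum accordingly, and then invoke Proposition~\ref{prop.hom.tr} to identify the $A$-factor with $\tT(A)$. The only cosmetic differences are that the paper obtains the vanishing of the cross blocks purely from \eqref{h.linear} applied to multiplication by $(1,0)$ rather than also using \eqref{h.assoc}, and cites part (4) of Proposition~\ref{prop.hom.tr} directly instead of going through the absorber argument of part (2).
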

\begin{proof}
Let $(\sigma, s)$ be a homothetic datum on $R = A\times B$. Then, for all $(a,b)\in R$, 
$$
\sigo (a,b) = (\sigor\su1 (a,b), \sigor\su 2 (a,b)), \qquad (a,b) \sigo  = (\sigol\su1 (a,b), \sigol\su 2 (a,b)),
$$
for some additive functions $\sigor\su1,\sigol\su 1:A\times B\lra A$, and $\sigor\su2,\sigol\su 2:A\times B\lra B$. Since $\sigo$ is a right operator,
$$
\sigo(a,0) = \sigo(a,0)(1,0) = (\sigor\su1 (a,0), \sigor\su 2 (a,0))(1,0) = (\sigor\su1 (a,0), 0).
$$
 Hence $\sigor\su 2(a,0) = 0$. Furthermore,
$$
(0,0) = \sigo (0,0) = \sigo (0,b)(1,0) = (\sigor\su1 (0,b), \sigor\su 2 (0,b))(1,0) = (\sigor\su1 (0,b), 0),
$$
which implies that $\sigor\su1 (0,b) =0$. Therefore,
$$
\sigo (a,b) = \sigo(a,0) +\sigo(0,b) = (\sigor\su1 (a,0), \sigor\su 2 (0,b)).
$$
In a similar way,
$$
(a,b)\sigo = (\sigol\su1 (a,0), \sigol\su 2 (0,b)).
$$
We thus conclude that 
$$
\sigo = (\sigo_A,\sigo_B),
$$
where $\sigo_A$ is a double operator on $A$ and $\sigo_B$ is a double operator on $B$ given by 
$$
\sigo_Aa = \sigor \su1(a,0),\; a\sigo_A = \sigol \su1(a,0), \qquad \sigo_Bb = \sigor \su2(0,b), \; b\sigo_B = \sigol \su2(0,b).
$$
Hence the problem of constructing homothetic trusses on $R$ splits into the problems of such constructions on $A$ and $B$ separately. Since $A$ has the identity, by statement (4) in Proposition~\ref{prop.hom.tr} all homothetic trusses $A$ are isomorphic to $\tT(A)$, and thus the assertion follows. 
\end{proof}

\section{From trusses to ring extensions}\label{sec.truss-ext}
In this section we start with a truss $T$ and first assign a ring to it and then homothetic extension of this ring in which the truss is contained as in Theorem~\ref{thm.h.ext}. This is achieved in two steps. First we associate a ring $\rR(T;e)$ to any truss $T$ (or, more generally a paragon) and any element $e$ in this truss (not necessarily an absorber or a central element as in \cite[Corollary~5.2]{Brz:tru} or \cite[Lemma~3.14]{Brz:par}). Next we show that there is a homothetic datum $(\sigma, s)$ on $\rR(T;e)$ stemming from the internal structure of the truss $T$ such that the induced truss coincides with the original $T$, that is, $\tT(\sigma, s)=T$.

Let $(T, [-,-,-], \cdot )$ be a truss. Given an element $e\in T$, the induced actions of $T$ on itself are defined as follows
\begin{subequations}\label{ind}
\begin{equation}\label{ind.l}
a\la e b = \lambda^e(a,b) :=  [ab,ae,e],
\end{equation}
\begin{equation}\label{ind.r}
a \ra e b = \varrho^e(a,b) := [ab,eb,e],
\end{equation}
\end{subequations}
for all $a,b\in T$. As shown in \cite{Brz:par} both $\lambda^e$ and $\varrho^e$ are heap homomorphisms in both arguments, which means that both $\la e$ and $\ra e$ distribute over the heap operation. Furthermore, $\lambda^e$ is a left action of the semigroup $(T,\cdot)$ while $\varrho^e$ is a right action. In addition the actions commute or satisfy the bimodule associative law, that is, for all $a,b,c\in T$, 
\begin{equation}\label{bimodule}
a\la e (b\ra e c) = (a\la e b)\ra e c.
\end{equation}
Indeed, 
$$
\begin{aligned}
a\la e (b\ra e c) &= a \la e[bc,ec,e] = [a[bc,ec,e], ae,  e] = [abc,aec,e],
\end{aligned}
$$
by the distributive law and \eqref{assoc}-\eqref{Malcev}. On the other hand and by the same token,
$$
(a\la e b)\ra e c = [ab,ae,e]\ra e c = [ [ab,ae,e] c,  ec, e] =  [abc,aec,e],
$$
as required. Therefore, we can write $a\la e b\ra e c$ for both ways of mixing the actions.

Finally, the Mal'cev identities imply that $e$ absorbs  induced actions, that is, for all $a\in T$,
\begin{equation}\label{absorb}
a\la e e = e\ra e a = e.
\end{equation}

\begin{definition}\label{def.closed}
 Let $(T,[-,-,-], \cdot)$ be a truss. A sub-heap $S$ of $(T,[-,-,-])$ is said to be {\em left-closed} (respectively, {\em right-closed}) if, there exists $e\in S$ such that
\begin{equation}\label{closed}
 \lambda^e(S,S)\subseteq S \qquad \mbox{(respectively,  $\varrho^e(S,S)\subseteq S$)}.
\end{equation}
 \end{definition}
 
 \begin{remark}\label{rem.closed}
 Note that the existential quantifier in Definition~\ref{def.closed} can be replaced by the universal one. Indeed, if the condition \eqref{closed} is satisfied for all $e\in S\neq \emptyset$, then such an $e$ exists.  Conversely, if \eqref{closed} is satisfied for some $e\in S$, then for any $e',s,s'\in S$,
 $$
 s\la{e'}s' = [ss',se',e'] = [[ss',se,e],[se',se,e],e'] = [s\la e s',s\la e e',e'] \in S,
 $$
 and similarly for the right action.
  \end{remark}

 Obviously $T$ is both left and right closed. Similarly, if $P$ is a left paragon in $T$, then, by the definition, $\lambda^e(T,P)\subseteq P$, for all $e\in P$, and hence it is left-closed. For the symmetric reason, right paragon is right-closed.

\begin{theorem}\label{thm.truss.ring}
Let $(T,[-,-,-], \cdot)$ be a truss. Let $S$ be a left- or right-closed sub-heap of $T$. For any  $e\in S$, the binary operation
$\pr e$ on $S$ defined by,
\begin{equation}\label{circ}
a\pr e b := [a\la e b, e\la e b, e] = [a\ra e b, a\ra e e, e],
\end{equation}
for all $a,b\in S$, makes the abelian group $(S,+_e)$ into an associative ring. We denote this ring by $\rR(S;e)$. 

For all $e,f\in T$, consider the  translation heap automorphism 
$\tau_e^f: T\lra T$, $a\lto [a,e,f]$; see  \eqref{swap}.
Let $e\in S$ and let $f\in T$ be such that
\begin{equation}\label{f.closed}
\lambda^e(f,S)\subseteq S \qquad \mbox{(respectively,  $\varrho^e(S,f)\subseteq S$)}.
\end{equation}
Then $\tau_e^f(S)$ is a left-closed (respectively, right-closed) sub-heap of $T$ and  $\tau_e^f$ restricts to an isomorphism of rings $\rR(S;e)\lra \rR(\tau_e^f(S);f)$.
\end{theorem}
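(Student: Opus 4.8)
The plan is to carry out every computation inside the single retract $\gG(S;e)$, writing $+,-$ for $+_e,-_e$ and treating $e$ as the zero; since the ternary operation of $T$ is recovered from any retract by $[x,y,z]=x-y+z$, an identity of heap expressions holds as soon as it holds in this group. Unfolding the induced actions \eqref{ind.l}--\eqref{ind.r} inside \eqref{circ}, I would first record the closed form
$$
a\pr e b = ab-ae-eb+ee,
$$
and observe that the two expressions in \eqref{circ} both evaluate to it, which settles their equality. Well-definedness is then immediate: if $S$ is left-closed then $a\la e b,\,e\la e b\in\lambda^e(S,S)\subseteq S$, and as $S$ is a sub-heap containing $e$ the bracket $[a\la e b,e\la e b,e]$ lies in $S$; the right-closed case uses the companion form.

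For the ring axioms, biadditivity of $\pr e$ on $\gG(S;e)$ follows by expanding $a\pr e(b+b')$ and $(a+a')\pr e b$ with the distributive law \eqref{dist} and cancelling. Associativity is the crux of the ring claim: substituting the closed form twice and distributing over iterated brackets, I expect both $(a\pr e b)\pr e c$ and $a\pr e(b\pr e c)$ to reduce to the single symmetric element
$$
abc-abe-aec-ebc+aee+ebe+eec-eee,
$$
the only structural input being associativity of the truss product (so that $(ab)c=a(bc)=abc$, and likewise for the mixed terms). This is routine but is the main calculational burden.

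For the isomorphism claim, the key simplification is that in the retract at $e$ the translation $\tau_e^f$ of \eqref{swap} is just $a\mapsto a+f$, since $[a,e,f]=a-e+f=a+f$. As $\tau_e^f(e)=f$, it is a heap isomorphism sending the zero $e$ to the zero $f$, hence a group isomorphism $\gG(S;e)\to\gG(\tau_e^f(S);f)$, and it restricts to a bijection onto $\tau_e^f(S)$ because it is a heap automorphism of $T$. To prove $\tau_e^f(S)$ left-closed I would compute $\lambda^f(\tau_e^f(s),\tau_e^f(s'))$ in the retract at $e$ and, after cancellation, identify it as $f+\big(\lambda^e(s,s')+\lambda^e(f,s')-\lambda^e(e,s')\big)$; here $\lambda^e(s,s')$ and $\lambda^e(e,s')$ lie in $S$ by left-closedness, $\lambda^e(f,s')$ lies in $S$ by the hypothesis \eqref{f.closed}, and $S$ is closed under $+_e,-_e$ as a sub-heap through $e$, so the element lies in $\tau_e^f(S)=S+f$. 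This is exactly the step that forces the hypothesis $\lambda^e(f,S)\subseteq S$, and I regard it as the main obstacle of this part.

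Multiplicativity is then a final cancellation: re-expressing the closed form of $\pr f$ in the retract at $e$ as $x\pr f y=xy-xf-fy+ff+f$, substituting $x=a+f$, $y=b+f$ and expanding $xy=(a+f)(b+f)$ by \eqref{dist}, every cross term in $f$ should vanish, leaving $ab-ae-eb+ee+f=\tau_e^f(a\pr e b)$, so that $\tau_e^f(a\pr e b)=\tau_e^f(a)\pr f\tau_e^f(b)$. Together with the group isomorphism this yields the ring isomorphism $\rR(S;e)\to\rR(\tau_e^f(S);f)$, and the right-closed case follows verbatim after interchanging $\la e\leftrightarrow\ra e$ and $\lambda^e\leftrightarrow\varrho^e$, using the second form of \eqref{circ} and the hypothesis $\varrho^e(S,f)\subseteq S$, by the evident left--right symmetry of all the definitions.
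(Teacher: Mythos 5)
Your proof is correct. The strategy is the same as the paper's (establish the two equivalent closed forms of $\pr e$, verify the ring axioms, then show the translation is a multiplicative group isomorphism), but the execution is genuinely different: you linearise every heap word in the single retract $\gG(T;e)$ and compute with $+_e$-polynomials such as $a\pr e b=ab-ae-eb+e^2$, reducing associativity to the symmetric expression $abc-abe-aec-ebc+ae^2+ebe+e^2c-e^3$, whereas the paper stays intrinsic, manipulating odd-length heap brackets via the cancellation and rearrangement rules \eqref{rear} and routing associativity through the separately established bimodule law \eqref{bimodule} for the induced actions. For the translation part the paper invokes the module-map property $\tau_e^f(a\la e b)=a\la f\tau_e^f(b)$ from \cite[Proposition~4.28]{Brz:par} to transport closedness and multiplicativity, while you rederive everything from scratch by expanding $\lambda^f(\tau_e^f(s),\tau_e^f(s'))$ and $x\pr f y=xy-xf-fy+f^2+f$ in the retract at $e$; your identification of $\lambda^f(\tau_e^f(s),\tau_e^f(s'))$ with $f+\bigl(\lambda^e(s,s')+\lambda^e(f,s')-\lambda^e(e,s')\bigr)$ correctly isolates where the hypothesis \eqref{f.closed} is needed, exactly as the paper's term $f\la e b$ does. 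Your route is self-contained and mechanical but obscures the structural content (the actions, the bimodule law); the paper's route is shorter on computation but depends on external lemmas and the reader's fluency with bracket rearrangement. Both are complete proofs.
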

\begin{proof}
In order to avoid unwieldy expressions that are too hard to read with ease, in what follows we will suppress the indices $e$ in expressions for products, sums and actions, and keep them only in places where an action induced by a different element appears.

First we check the equality of two expressions for $\pr{}$ in equation \eqref{circ}. This follows by the application of the symmetry rule \eqref{symm},
$$
\begin{aligned}
{}[a\la{} b, e\la{} b, e] &= [ab,ae,e,eb, e^2, e,e]
= [ab,eb,e,ae,e^2,e,e] = [a\ra{} b, a\ra{} e, e].
\end{aligned}
$$
As a consequence of this equality the operation $\pr{}$ is a binary operation on $S$ in both cases; if $S$ is left-closed we use the left actions and when $S$ is right-closed we use the right ones.

The distributive law for $\pr{}$ over $+$ follows by the distributive laws of actions, by the absorption rules \eqref{absorb} and the rearrangement rules \eqref{rear}. Explicitly, for all $a,b,c\in S$,
$$
\begin{aligned}
a\pr{}(b+ c) &= a\pr{} [b,e,c]  = [a \la{} [b,e,c], e\la{} [b,e,c], e]\\
&= [a\la{} b, e, a \la{} c, e \la{} b, e, e\la{} c , e]\\
&= [a\la{} b, e \la{} b, a \la{} c, e , e, e\la{} c , e]\\
&= [a\la{} b, e \la{} b, e, e, a \la{} c, e\la{} c , e] = a\pr{} b+ a\pr{} c.
\end{aligned}
$$
The right distributive law follows by symmetry through expressing the multiplication $\pr{}$ in terms of the right induced action. Finally, the associative law for $\pr{}$ is a consequence of the possibility of expressing of this operation in two different ways in \eqref{circ} and the bimodule associative law \eqref{bimodule}. Explicitly, for all $a,b,c\in S$,
$$
\begin{aligned}
a\pr{}(b\pr{} c) &= [a\la{} (b\pr{} c), e \la{} (b\pr{} c), e]\\
&= [a\la{} b \ra{} c, a\la{} b\ra{} e, a\la{} e, e\la{} b\ra{} c, e \la{} b \ra{} e, e\la{} e, e]\\
&= [a\la{} b \ra{} c, a\la{} b\ra{} e, e, e\la{} b\ra{} c, e \la{} b \ra{} e]
\end{aligned}
$$
On the other hand,
$$
\begin{aligned}
(a\pr{} b)\pr{} c &= [(a\pr{} b) \ra{} c, (a\pr{} b) \ra{} e, e]\\
&= [a\la{} b\ra{} c, e\la{} b\ra{} c, e \ra{} c, a\la{} b\ra{} e, e\la{} b\ra{} e, e \ra{} e, e]\\
&= [a\la{} b\ra{} c, e\la{} b\ra{} c, e , a\la{} b\ra{} e, e\la{} b\ra{} e]\\
&= [a\la{} b \ra{} c, a\la{} b\ra{} e, e, e\la{} b\ra{} c, e \la{} b \ra{} e] = a\pr{}(b\pr{} c),
\end{aligned}
$$
as required.

By \cite[Proposition~4.28]{Brz:par},  $\tau_e^f$ is an isomorphism of $T$-modules $(T,\la{})$ and $(T,\la f)$, 
as well as $T$-modules $(T,\ra{})$ and $(T,\ra f)$, 
that is, it is an isomorphism of heaps such that, for all $a, b\in T$,
\begin{equation}\label{tau.mod}
\tau_e^f(a\la{} b) = a\la f \tau_e^f(b) \quad \& \quad \tau_e^f(b\ra{} a) = \tau_e^f(b) \ra f b .
\end{equation}
In particular, it is an isomorphism of groups $\gG(T;e)\lra \gG(T;f)$ and, hence, if $e\in S$, it restricts to the isomorphism of groups $\gG(S;e)\lra \gG(\tau_e^f(S);f)$.  We need to show that $\tau_e^f(S)$ is a closed sub-heap. Assume that $f$ satisfies the first of conditions in \eqref{f.closed}. Then, for all $a,b\in S$,
$$
\tau_e^f(a)\la{} b = [a\la{} b, e\la{} b, f\la{} b] = [a\pr{} b, e,  f\la{} b] \in S.
$$ 
Therefore, by the first of equations \eqref{tau.mod}, 
$$
\tau_e^f(a)\la f \tau_e^f(b) =\tau_e^f (\tau_e^f(a)\la{} b) \in \tau_e^f(S),
$$
and hence $\tau_e^f(S)$ is left-closed. If the other condition in \eqref{f.closed} is satisfied, then we can use the second of the module map properties \eqref{tau.mod} to draw the required conclusion.

To complete the proof we only need to show that $\tau_e^f$ preserves the multiplications. To this end let us take any $a, b\in S$ and compute,
$$
\begin{aligned}
\tau_e^f(a)\pr f\tau_e^f(b) &= [\tau_e^f(a)\la f \tau_e^f(b), f\la f \tau_e^f(b),f]\\
&= [[a,e,f]\la f \tau_e^f(b), f \la f \tau_e^f(b),f]\\
&= [a\la f \tau_e^f(b), e\la f \tau_e^f(b), f\la f \tau_e^f(b), f \la f \tau_e^f(b),f]\\
&= [\tau_e^f(a\la{} b), \tau_e^f (e\la{} b), \tau_e^f(e)]
= \tau_e^f\left([a\la{} b, e\la{} b, e]\right) = \tau_e^f(a\pr{} b), 
\end{aligned}
$$
where we use the cancellation laws \eqref{cancel} and the module map property \eqref{tau.mod} to derive the fourth equality. Therefore, $\tau_e^f$ restricted to $S$ is an isomorphism of rings as asserted.
\end{proof}

Since $T$ is closed, there is a family of isomorphic rings $\rR(T;e)$ labelled by elements $e\in T$. These rings are of the main interest in what follows.

\begin{corollary}\label{cor.ring-type}~\\
\indent (1)  If $e$ is an absorber in a truss $T$, then, for all $a,b\in T$,
$$
a\pr{} b = ab,
$$
in $\rR(T;e)$.

(2) For any ring  $R$, $\rR(\tT(R);0) =R$. Consequently, for all $e\in R$, $\rR(\tT(R);e) \cong R$.

\end{corollary}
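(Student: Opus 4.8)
The plan is to treat part~(1) by direct evaluation of the definition~\eqref{circ} of the multiplication $\pr e$, using only the absorber property together with the Mal'cev identities~\eqref{Malcev}, and then to read off part~(2) as the special case $e=0$ for the truss $\tT(R)$, with the isomorphism statement following from the translation clause of Theorem~\ref{thm.truss.ring}.

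For part~(1), I would start from the left-action form $a\pr e b = [a\la e b, e\la e b, e]$. The absorber condition gives $ae=e$, so the induced action~\eqref{ind.l} collapses to $a\la e b = [ab,ae,e] = [ab,e,e] = ab$ by the second Mal'cev identity in~\eqref{Malcev}. Likewise, since $eb=e$ and $e^2=e$, one gets $e\la e b = [eb,e^2,e] = [e,e,e] = e$. Substituting both evaluations back into~\eqref{circ} yields $a\pr e b = [ab,e,e] = ab$, again by~\eqref{Malcev}. This exhausts part~(1); the only thing to watch is which of the two Mal'cev cancellations is applied at each step, and I foresee no real difficulty here.

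For part~(2), the first observation is that $0$ is an absorber of $\tT(R)$, since $0\cdot a = a\cdot 0 = 0$ in any ring; hence part~(1) applies with $e=0$ and shows that the multiplication $\pr 0$ of $\rR(\tT(R);0)$ is the original product of $R$. It then remains to match the additive data: the retract addition $+_0 = [-,0,-]$ computed in $\hH(R,+)$ is $a +_0 b = a - 0 + b = a+b$, with $0$ as neutral element and $[0,a,0]=-a$ as inverse, so the underlying abelian group of $\rR(\tT(R);0)$ is exactly $(R,+)$. Together these give the on-the-nose equality $\rR(\tT(R);0)=R$.

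Finally, for the consequence $\rR(\tT(R);e)\cong R$, I would apply the last assertion of Theorem~\ref{thm.truss.ring} with $S = T = \tT(R)$, which is both left- and right-closed, so that the closure condition~\eqref{f.closed} is automatic; the translation $\tau_0^e$ then restricts to a ring isomorphism $\rR(\tT(R);0)\lra \rR(\tau_0^e(\tT(R));e) = \rR(\tT(R);e)$, and composing with $\rR(\tT(R);0)=R$ gives the claim. The argument is entirely computational rather than conceptual, so I expect no genuine obstacle beyond careful bookkeeping of the heap cancellations in part~(1).
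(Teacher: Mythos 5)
Your proposal is correct and follows essentially the same route as the paper, which simply notes that part (1) is immediate from the absorber property, the definition of $\pr{}$ and the Mal'cev identities, and that part (2) follows from part (1) together with the translation-isomorphism clause of Theorem~\ref{thm.truss.ring}. Your computation of $a\la e b = ab$ and $e\la e b = e$ and your use of $\tau_0^e$ are exactly the details the paper leaves implicit.
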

\begin{proof}
The first statement follows immediately from the definition of an absorber and the multiplication $\pr{}$ (and the Mal'cev identities), while the second one is a consequence of the first one and the second statement of Theorem~\ref{thm.truss.ring}.
\end{proof}

\begin{definition}\label{def.invariant}
Let $(T,[-,-,-], \cdot)$ be a truss and let $e\in T$. A subgroup $I\leq \gG(T;e)$ is said to be  {\em left invariant} (respectively, {\em right invariant}) if $\lambda^e(e,I)\subseteq I$ (respectively, $\varrho^e(I,e)\subseteq I)$. The set of all left invariant subgroups of $\gG(T;e)$ is denoted by $\linv T e$ (respectively, $\rinv T e$ for right invariant subgroups).
\end{definition}

\begin{lemma}\label{lem.inv}
Let $(T,[-,-,-], \cdot)$ be a truss. 
\begin{zlist}
\item For all natural $n$,  if $I\in \linv T e$, then $\lambda^e(e^n, I)\subseteq I$ (resp.\  if $I\in \rinv T e$, then $\varrho^e(I,e^n)\subseteq I$).  \item Let $I\leq \gG(T;e)$ be a subgroup such that $e^2\in I$. Then $I\in \linv T e$ (respectively, $I\in \rinv T e$) if and only if $eI\subseteq I$ (respectively, $Ie \subseteq I$). 
\end{zlist}
\end{lemma}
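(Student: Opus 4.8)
The plan is to treat the two parts separately and, in each, to prove only the left-invariant statement, since the right-invariant one follows by the evident symmetry (replacing the left action $\la e$ by the right action $\ra e$, and $eI$ by $Ie$). The main tools are already available in the excerpt: that $\lambda^e$ is a left action of the semigroup $(T,\cdot)$ and $\varrho^e$ a right action (recorded just before Theorem~\ref{thm.truss.ring}), together with the description of the retract group $\gG(T;e)$ with addition $+_e=[-,e,-]$, zero $e$, and inverse $-_e h=[e,h,e]$.

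For part (1) I would argue by induction on $n$. The base case $n=1$ is exactly the definition of $I\in\linv T e$. For the inductive step I would use the left-action identity $(ab)\la e c=a\la e(b\la e c)$ with $a=e$, $b=e^n$, $c=i\in I$, giving
\[
e^{n+1}\la e i=(e\cdot e^n)\la e i=e\la e(e^n\la e i).
\]
By the inductive hypothesis $e^n\la e i\in I$, and then left-invariance of $I$ yields $e\la e(e^n\la e i)\in I$, which closes the induction. The right-handed version is identical, using $e^{n+1}=e^n e$ and the right-action identity $(a\ra e b)\ra e c=a\ra e(bc)$.

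For part (2) the key is to rewrite the induced action $e\la e i$ inside the retract group. Unfolding the definition gives $e\la e i=[ei,e^2,e]$, and a short manipulation with the Mal'cev identities shows $[ei,e^2,e]=ei-_e e^2$, the difference in $\gG(T;e)$ of $ei$ and $e^2$. I expect this identification to be the main (though routine) obstacle, since it is the one point where the truss multiplication, the induced action, and the group structure of the retract must be reconciled. Once it is established, the equivalence is immediate. If $eI\subseteq I$, then because $e^2\in I$ forces $-_e e^2\in I$, we get $e\la e i=ei-_e e^2\in I$ for every $i\in I$, whence $I\in\linv T e$. Conversely, if $I\in\linv T e$, then $ei-_e e^2=e\la e i\in I$, and adding back $e^2\in I$ gives $ei=(e\la e i)+_e e^2\in I$, so $eI\subseteq I$. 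The right-invariant case follows verbatim from $\varrho^e(i,e)=[ie,e^2,e]=ie-_e e^2$ with $Ie$ in place of $eI$.
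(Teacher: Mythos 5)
Your proposal is correct and follows essentially the same route as the paper: part (1) is exactly the paper's one-line appeal to the semigroup action property of $\lambda^e$ and $\varrho^e$ (which you simply spell out as an induction), and part (2) is the same computation, identifying $e\la e x=[ex,e^2,e]=ex-_ee^2$ and using that $I$ is a subgroup containing $e^2$ to pass between $e\la e x\in I$ and $ex\in I$. No gaps.
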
 
\begin{proof}
The first statement follows by the fact that $\lambda^e$ and $\varrho^e$ are actions of the semigroup $(T,\cdot)$. For the second statement,  $I\in \linv T e$, if and only if, for all $x\in I$ there exist $y\in I$ such that
$$
e\la{} x = [ex,e^2,e] =y,
$$
that is
$$
ex = [ex,e^2,e, e,e^2] = [y, e,e^2] = y+ e^2 \in I,
$$
as required.
\end{proof}

\begin{proposition}\label{prop.ideal}
Let $(T,[-,-,-], \cdot)$ be a truss. Then $P\neq \emptyset$ is a left (respectively, right) paragon in $T$ if and only if, for all $q\in P$, $\tau_q^e(P)$ is a left ideal in $\rR(T;e)$ such that $\tau_q^e(P) \in \linv T e$ (respectively, $\tau_q^e(P)$ is a right ideal in $\rR(T;e)$ such that $\tau_q^e(P) \in \rinv T e$). 
\end{proposition}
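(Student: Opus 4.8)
The plan is to funnel both sides of the equivalence through the single closure condition $T\la e P'\subseteq P'$, where $P':=\tau_q^e(P)$. I read this condition as the left paragon axiom for $P$ on one side, and as the conjunction ``$P'$ is a left ideal of $\rR(T;e)$ and $P'\in\linv T e$'' on the other.

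First I would make two elementary observations. Since $q\in P$ and $\tau_q^e(q)=[q,q,e]=e$, the set $P'=\tau_q^e(P)$ is a sub-heap containing the zero $e$ of $\gG(T;e)$, hence a subgroup of $\gG(T;e)$; as $P=\tau_e^q(P')$ and $\tau_e^q=(\tau_q^e)^{-1}$, the set $P$ is a sub-heap exactly when $P'$ is, so the sub-heap requirement never causes trouble. Second, by \eqref{ind.l} the defining left paragon condition $[ap,aq,q]\in P$ is literally $a\la q p\in P$; thus $P$ satisfies the left paragon axiom at the base point $q$ iff $T\la q P\subseteq P$.

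Next I would transport this along $\tau_q^e$. Reading the module-map property \eqref{tau.mod} with base points $q$ and $e$ gives $\tau_q^e(a\la q p)=a\la e\tau_q^e(p)$, and since $\tau_q^e$ is a heap bijection carrying $P$ onto $P'$, the condition $T\la q P\subseteq P$ is equivalent to $T\la e P'\subseteq P'$. This also absorbs the quantifier over $q$: the proposition fixes $q\in P$ and speaks about $P'=\tau_q^e(P)$, so it is enough to argue one base point at a time. It then remains to prove, for a subgroup $P'\leq\gG(T;e)$, that $T\la e P'\subseteq P'$ holds iff $P'$ is a left ideal of $\rR(T;e)$ lying in $\linv T e$. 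Here the formula \eqref{circ}, $a\pr e x=[a\la e x,e\la e x,e]$, does all the work: if $T\la e P'\subseteq P'$ then $a\la e x$, $e\la e x$ and $e$ all lie in $P'$, so $a\pr e x\in P'$ (left ideal) and $e\la e P'\subseteq P'$ (membership in $\linv T e$); conversely, solving the heap identity yields $a\la e x=[a\pr e x,e,e\la e x]$, and the left-ideal property together with $P'\in\linv T e$ place all three entries in $P'$, whence $a\la e x\in P'$. The right-handed statement is the mirror image, using $\ra e$ and the second expression for $\pr e$ in \eqref{circ}.

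The one genuinely load-bearing step is the converse of this last equivalence: the left-ideal property alone does not recover $T\la e P'\subseteq P'$, because the expression for $a\la e x$ in terms of $a\pr e x$ carries the correction term $e\la e x$, and it is precisely the hypothesis $P'\in\linv T e$ that tames this term. I expect this to be where care is needed; the remainder is bookkeeping with the heap identities \eqref{rear} and the bijectivity of $\tau_q^e$.
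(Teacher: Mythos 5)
Your proof is correct and follows essentially the same route as the paper's: the heart of both arguments is the interconversion $a\pr e x=[a\la e x,e\la e x,e]$ and $a\la e x=[a\pr e x,e,e\la e x]$, with membership in $\linv T e$ supplying exactly the correction term $e\la e x$ that you identify as the load-bearing step. The only cosmetic difference is that you transport the paragon condition between base points directly via the module-map property \eqref{tau.mod}, whereas the paper instead invokes the previously established fact that translates of paragons are paragons.
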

\begin{proof}
Assume first that $P$ is a left paragon. Then, since a left paragon in $T$ is the same as an induced submodule of the left regular module $T$, $\tau_q^e(P)$ is a left paragon in $T$ by \cite[Proposition~3.4]{BrzRyb:con}. Furthermore, since $e\in \tau_q^e(P)$ it is a subgroup of $\gG(T;e)$. The paragon property implies that $\tau_q^e(P) \in \linv T e$, and, for the same reason, for all $a\in T$ and $x\in \tau_q^e(P)$,
$$
a\pr{} x = [a\la{} x, e\la{} x, e]\in \tau_q^e(P).
$$
Hence, $\tau_q^e(P)$ is an invariant ideal in $\rR(T;e)$. 

In the converse direction, assume that $P\subseteq T$ is such that, for all $q\in P$, $I:= \tau_q^e(P)$ is an invariant left ideal in $\rR(T;e)$. Then, for all $a\in T$ and $x\in I$,
$$
a\la{} x = [a\la{} x , e\la{} x, e , e, e\la{} x] = [a\pr{} x, e\la{} x,e] \in I,
$$
so $I$ is a paragon in $T$. Since $P= \tau_e^q(I) = \{[x,e,q]\; |\; x\in I\}$, it is a paragon as well by \cite[Proposition~3.4]{BrzRyb:con}.
 \end{proof}
 
Presently, for any truss $T$  we describe a homothetic extension of $\rR(T;e)$ which contains $T$.
  \begin{theorem}\label{thm.hom}
 Let $T$ be a truss and $e\in T$. Define the double operator $\sige$ on the abelian group $\gG(T;e)$ by
\begin{equation}\label{sigma.e}
 \sige a = e\la{} a, \qquad a  \sige = a\ra{} e,
 \end{equation}
 for all $a\in T$. Then
 \begin{zlist}
 \item The pair $(\sige,e^2)$ is a homothetic datum on $\rR(T;e)$.
 \item As trusses, $T=\tT(\sige,e^2)$.
 \end{zlist}
 \end{theorem}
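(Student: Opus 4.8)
The plan is to carry out every computation inside the retract group $\gG(T;e)$ (whose zero is $e$), after rewriting the ring multiplication $\pr{}$ and the two components of $\sige$ purely in terms of the truss multiplication and the heap operation. The formulae I would record at the outset are $\sige a=[ea,e^2,e]$ and $a\sige=[ae,e^2,e]$, read off directly from \eqref{sigma.e} and the definitions of $\la{}$ and $\ra{}$, together with $a\pr{}b=[ab,ae,e^2,eb,e]$ obtained by expanding \eqref{circ}. Working concretely in $\gG(T;e)$ rather than with abstract nested brackets is the device I would rely on to keep the many heap entries and their positions under control, reducing every required identity to associativity and the distributive laws of $T$ via the rearrangement rules \eqref{rear}.

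For part (1) I must show that $\sige$ is a bimultiplication on $\rR(T;e)$, that it is self-permutable, and that $(\sige,e^2)$ satisfies \eqref{h.norm} and \eqref{h.sig}. Additivity of each component is immediate, since $\la{}$ and $\ra{}$ distribute over the heap and fix $e$ by the absorption rule \eqref{absorb}, so each is an endomorphism of $\gG(T;e)$. The module-type conditions \eqref{h.linear} and \eqref{h.assoc}, self-permutability \eqref{h.comm}, and the equality $e\la{}e^2=e^2\ra{}e$ demanded by \eqref{h.norm} all unfold, after the substitutions above, into consequences of associativity of the truss multiplication (used in forms such as $e(ea)=e^2a$ and $a(eb)=(ae)b$) combined with distributivity; I would expand both sides and cancel using \eqref{cancel} and \eqref{symm}. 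The one relation I would single out is \eqref{h.sig}, $\sige^2=\sige+\overline{e^2}$: on the right component it reads $e\la{}(e\la{}a)=(e\la{}a)+_e(e^2\pr{}a)$, and both sides reduce to $[e^2a,e^3,e]$, with the left component symmetric.

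For part (2) I would first note that, by the construction in Definition~\ref{def.h.ext}, the heap underlying $\tT(\sige,e^2)$ is the heap of $\rR(T;e)$, which is the original heap $(T,[-,-,-])$ (the heap associated to the retract $\gG(T;e)$ recovers $T$), so only the multiplications need be compared. Substituting the formulae above into the homothetic-truss product $a\di b=(a\pr{}b)+_e(a\sige)+_e(\sige b)+_e e^2$ and simplifying in $\gG(T;e)$, I expect every contribution involving $ae$, $eb$, $e^2$ and $e$ to cancel, leaving exactly $ab$; then the identity map is the asserted isomorphism, i.e.\ $T=\tT(\sige,e^2)$. There is no conceptual obstacle here: the induced actions are rigged so that the three correction terms carried by $\pr{}$, by $\sige$, and by the summand $e^2$ telescope. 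The main difficulty is therefore purely clerical — tracking the heap entries faithfully and reading the datum conditions with care, since \eqref{h.norm} is an equality of elements of $R$ whereas \eqref{h.sig} is an equality of bimultiplications in $\Omega(\rR(T;e))$, whose inner part $\overline{e^2}$ acts by $\pr{}$-multiplication by $e^2$; performing all the calculations inside the fixed retract $\gG(T;e)$ is how I would keep these straight.
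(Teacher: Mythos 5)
Your proposal is correct and follows essentially the same route as the paper: both verify that $\sige$ is a self-permutable bimultiplication satisfying \eqref{h.norm} and \eqref{h.sig} by direct expansion in the retract $\gG(T;e)$ using distributivity, associativity and the rearrangement rules \eqref{rear} (the paper packages some of this via the bimodule identity \eqref{bimodule}, which is itself proved by the same expansion), and both establish part (2) by showing the four terms of $a\di b$ telescope to $ab$. Your key intermediate claims — e.g.\ that both sides of \eqref{h.sig} applied to $a$ reduce to $[e^2a,e^3,e]$, and that $\overline{e^2}$ acts by $\pr{}$-multiplication — check out.
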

 \begin{proof}
Since $\sige$ is given by truss actions that preserve $e$, both maps 
are 
group endomorphisms of  $\gG(T;e)$. To prove that $\sige$ is a double homothetism the following properties need to be checked, for all $a,b\in T$:
\begin{subequations}\label{homo.all}
\begin{equation}\label{right}
e\la{}(a\pr{} b) = (e\la{} a)\pr{} b,
\end{equation}
\begin{equation}\label{left}
(a\pr{} b)\ra{} e = a\pr{} (b\ra{} e), 
\end{equation}
\begin{equation}\label{mult}
a\pr{} (e\la{} b) = (a\ra{} e)\pr{} b,
\end{equation}
\begin{equation}\label{homo}
e\la{} (a\ra{} e) = (e\la{} a)\ra{} e. 
\end{equation}
\end{subequations}

We start by proving equation \eqref{right}. By the bimodule property \eqref{bimodule}, the definition of multiplication $\pr{}$, and the distributivity of actions
$$
\begin{aligned}
e\la{}(a\pr{} b) &= e\la{} [a\ra{} b, a\ra{}  e, e] = [e\la{} (a\ra{} b), e\la{} (a\ra{}  e), e]\\
&= [(e\la{} a)\ra{} b, (e\la{} a)\ra{}  e, e] = (e\la{} a)\pr{} b,
\end{aligned}
$$
as required. The equality \eqref{left} is proven by the same arguments (but using the other equivalent definition of the product $\pr{}$). The multiplier property \eqref{mult} is also proven by direct calculations. On one hand,
$$
\begin{aligned}
a\pr{} (e\la{} b) &= [a\la{} (e\la{} b), e\la{} (e\la{} b), e ]\\
& = [ae\la{} b, e^2\la{} b, e] 
=[aeb, ae^2, e^3, e^2b, e],
\end{aligned}
$$
where we have used the fact that $\la{}$ is a left action and its definition as well as the rearrangement and cancellation properties \eqref{rear}. On the other hand, using analogous properties of the right action we find
$$
\begin{aligned}
(a\ra{} e)\pr{} b &= [(a\ra{} e)\ra{} b, (a\ra{} e)\ra{} b, e ]\\
& = [e\ra{} eb, a\la{} e^2, e] = [aeb,e^2b, e^3, ae^2, e]\\
&=[aeb, ae^2, e^3, e^2b, e] = a\pr{} (e\la{} b),
\end{aligned}
$$
as required. The final double homothetism condition \eqref{homo} is a special case of the bimodule property \eqref{bimodule}. 

Directly by the definition of the actions,
$$
\sige e^2 = e \la{} e^2 = [e^3,e^2,e] = e^2 \ra{} e = e^2 \sige,
$$
hence the first of conditions \eqref{trans} is satisfied. Furthermore, since $\la{}$ is the action, for all $a\in T$,
$$
\sige^2a - \sige a = e^2\la{} a - e\la{} a = [e^2\la{} a, e\la{} a , e] = e^2\pr{}a.
$$
In a similar way $a\sige^2 - a\sige = [a\ra{} e^2, a \ra{} e , e] = a\pr{}e^2$, hence $\sige^2 =\sige +\overline{e^2}$ as required for the second of conditions \eqref{trans}. This proves statement (1). 

Since the heap structure of a retract of a heap is equal to the original heap structure, $T$ and $\tT(\sige,e^2)$ are mutually equal as heaps. Let us denote by $\circ$ the product in $\tT(\sige,e^2)$. Then, for all $a,b\in T$,
$$
\begin{aligned}
a\circ b&= a\pr{} b +a\sige + \sige b + e^2 \\
&= [ab, ae,e,eb, e^2, e, ae, e^2, e, e, eb,e^2, e, e, e^2]\\
&= [ab, ae,e,eb, e^2, e, ae, e^2, eb]\\
&= [ab, ae,ae,eb,e,e,e^2,e^2,eb] = ab,
\end{aligned}
$$
where the fourth and last equalities follow by a repetitive use of the cancellation rule \eqref{cancel}, while the fifth equality follows by the symmetry rule \eqref{symm} under the cyclic permutation of odd indices $(3,7,5)$. Therefore, the trusses are equal as stated.
 \end{proof}
 
 \begin{definition}\label{def.hom.ext.tr}
 Let $T$ be a truss and $e\in T$, then the ring $\rR(T;e)(\eps,e^2)$  is called an {\em infinite homothetic extension ring of $T$} and is denoted by $T(e)$. Similarly, the ring $\rR(T;e)^c(\eps,e^2)$ (if it exists) is called a {\em finite homothetic extension ring of $T$} and is denoted by $T^c(e)$. 
 \end{definition}
 
 In view of the definition of the multiplication $\pr{}$ in $\rR(T;e)$ and since $a\eps = ae - e^2$ and $\eps b = eb - e^2$ in $\gG(T;e)$,
 the product of ring $T(e)$ built on the abelian group $\gG(T;e)\times \ZZ$ has the explicit form
 \begin{equation}\label{te.prod}
 (a,k)(b,l) = (ab + (l-1) ae+(k-1)eb + (k-1)(l-1)e^2, kl),
 \end{equation}
 for all $a,b\in T$ and $k,l\in \ZZ$. 
 
 The results of Part~\ref{part.ext}  can be summarised as the following two statements:
 \begin{zlist}
 \item 
 There is a one-to-one correspondence between isomorphism classes of trusses and weak equivalence classes of extensions of rings by $\ZZ$. Furthermore, up to translational isomorphism infinite homothetic trusses on a ring $R$ are in one-to-one correspondence with equivalence classes of extensions of $R$ by $\ZZ$.
 \item There is a two-way onto correspondence between trusses with a selected element and rings with homothetic data given by 
\begin{equation}\label{corresp}
 (R,\sigma,s)\lto \tT(\sigma,s), \qquad (T,e)\lto \rR(T,e).
\end{equation}
 In particular, every truss is a homothetic truss, that is, it is of the form $\tT(\sigma,s)$ for some ring $R$ and a homothetic datum on $R$. Conversely, every ring can be understood as a ring associated to a truss $T$ with a chosen element $e$.
 \end{zlist}

 \part{Interpretation}\label{part.int}

 \section{Universality of homothetic extensions of trusses}\label{sec.univ}
 It has been explained in \cite{BrzRyb:mod} that there is a method of embedding of any truss $T$ in a ring by appending $T$ with an absorber (zero). This is based on extending the truss multiplication $T$ to the coproduct (direct sum) of $T$ with the singleton truss $\{0\}$, $T\boxplus \{0\}$. Presently we review this procedure in brief.
 
 The heap $T\boxplus \{0\}$ consists of odd-length words in letters in $T$ and $0$ of the following form: fix an element $e\in T$,
\begin{equation}\label{ele}
 0,\;\; a,\;\; a\, e\, 0, \;\; a\, 0\, e\, 0\, e\ldots0\, e,\;\; 0\, a\, 0\, e\, 0\ldots e\, 0, \qquad a\in T;
\end{equation}
 see \cite[Proposition~3.6]{BrzRyb:mod}. Any word is identified with a word obtained by independent permutations of elements in odd positions or even positions (in concord with  \eqref{symm}). The operation is by concatenation of words followed by the removal of any pairs of identical letters placed in consecutive positions and application of the heap operation to any triples of consecutive elements of $T$. The multiplication in $T\boxplus \{0\}$ is defined by the rules, for all $a,b\in T$,
\begin{equation}\label{prod.T0}
 a\cdot b = ab, \quad a\cdot 0 = 0\cdot a =0,
\end{equation}
 and extended to the whole of $T\boxplus \{0\}$ by the truss distributivity. The rules \eqref{prod.T0} imply that $0$ is the absorber in this truss, the multiplication \eqref{prod.T0} makes the abelian group $\gG(T\boxplus \{0\};0)$ into a ring. We denote this ring by $T_0$.  
 Observe that any homomorphism of rings $\varphi: R\lra R'$ as a function is the same as a homomorphism of corresponding trusses $\tT(\varphi):\tT(R)\lra \tT(R')$, therefore whenever we write a composition of a truss homomorphism $\psi:T\lra \tT(R)$ with $\varphi:R\lra R'$, $\varphi\circ \psi$ we think of $\tT(\varphi)\circ \psi$.
 
 \begin{lemma}\label{lem.ext.0}
 Let $T$ be a truss. An extension $T_0$ has the following universal property. For any ring $R$ and a homomorphism of trusses $\varphi: T\lra \tT(R)$ there exists a unique ring homomorphism $\reallywidehat{\varphi}:T_0\lra R$ rendering commutative the following  diagram
 $$
\xymatrix{T \ar[rr]^-{\iota_T} \ar[dr]_-{\varphi}&& \tT(T_0) \ar@{-->}[dl]^-{\exists !\ \tT(\widehat{\varphi}) }
\\
& \tT(R), &}
$$
 where $\iota_T:T\lra \tT(T_0)$ is given by $t\lto t$. A pair $(T_0,\iota_T)$ is a universal arrow (see \cite[Section III.1, Definition]{Mac:lane} ).
  \end{lemma}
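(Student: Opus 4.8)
The plan is to deduce the universal property from two facts packaged into the construction of $T_0$: that $T_0 = \rR(T\boxplus\{0\};0)$ is the coproduct of $T$ with the singleton truss $\{0\}$ turned into a ring at its absorber $0$, and that ring homomorphisms out of $T_0$ are precisely the absorber-preserving truss homomorphisms out of $\tT(T_0)=T\boxplus\{0\}$. The whole argument is then a formal manipulation of universal arrows.

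First I would record the bridge between the two categories. For any ring $R$ a function $f:T_0\to R$ is a ring homomorphism if and only if $f$ preserves the ternary (heap) operation and the multiplication and satisfies $f(0)=0_R$. Indeed, an absorber-preserving heap homomorphism is automatically additive, since addition in $\gG(T_0;0)$ is $a+b=[a,0,b]$ and $0$ is sent to the absorber $0_R$ of $\tT(R)$, so $f(a+b)=[f(a),0_R,f(b)]=f(a)+f(b)$; multiplicativity is assumed. Thus, as functions, ring homomorphisms $T_0\to R$ coincide with truss homomorphisms $\tT(T_0)\to\tT(R)$ sending $0\mapsto 0_R$.

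For existence, note that $0_R$ is idempotent, so $g:\{0\}\to\tT(R)$, $0\mapsto 0_R$, is a truss homomorphism. The coproduct universal property of $T\boxplus\{0\}$ (from \cite{BrzRyb:mod}) then supplies a unique truss homomorphism $\Phi:T\boxplus\{0\}\to\tT(R)$ with $\Phi\circ\iota_T=\varphi$ and $\Phi(0)=0_R$. Since $\Phi$ preserves the absorber, the bridge above promotes it to a ring homomorphism $\widehat{\varphi}:T_0\to R$, and by construction $\tT(\widehat{\varphi})\circ\iota_T=\varphi$. For uniqueness, any ring homomorphism $\psi:T_0\to R$ with $\tT(\psi)\circ\iota_T=\varphi$ gives a truss homomorphism $\tT(\psi):T\boxplus\{0\}\to\tT(R)$ that restricts to $\varphi$ on $T$ and sends $0\mapsto 0_R$ (ring maps preserve zero); the uniqueness clause of the coproduct property forces $\tT(\psi)=\Phi$, whence $\psi=\widehat{\varphi}$. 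Reading existence and uniqueness together is exactly the assertion that $(T_0,\iota_T)$ is a universal arrow from $T$ to $\tT$.

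The step requiring the most care is the bridge lemma, that is, checking that an absorber-preserving truss homomorphism between the trusses underlying two rings is genuinely a ring homomorphism; everything else is a formal consequence of the coproduct description of $T\boxplus\{0\}$. If one prefers not to invoke that universal property as a black box, the alternative is to define $\widehat{\varphi}$ by hand on the normal forms \eqref{ele}---setting $\widehat{\varphi}(a)=\varphi(a)$, $\widehat{\varphi}(0)=0_R$ and extending additively and multiplicatively---in which case the genuine obstacle becomes well-definedness, i.e.\ verifying that the prescription respects the heap identifications among the words \eqref{ele}. This is precisely the content that the coproduct route compresses into a single reference, which is why I would take the categorical route.
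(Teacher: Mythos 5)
Your proposal is correct and follows essentially the same route as the paper: both arguments invoke the coproduct universal property of $T\boxplus\{0\}$ to produce a unique truss morphism $\tT(T_0)\lra\tT(R)$ agreeing with $\varphi$ on $T$ and sending $0$ to $0_R$, and then observe that an absorber-preserving truss homomorphism between trusses of rings is a ring homomorphism. Your explicit statement of that last ``bridge'' step (additivity from $f([a,0,b])=[f(a),0_R,f(b)]$) and of the uniqueness argument merely makes precise what the paper's proof leaves implicit.
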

 
\begin{proof}
Let us consider the following commutative diagram of morphisms of trusses:
\begin{equation}\label{uni.t0}
\xymatrix{&& \tT(R)&& \cr T \ar[rr]^{\iota_T}\ar[urr]^{\varphi} & &\tT(T_0) \ar@{-->}[u]_{\widetilde{\varphi}} & & \{0\}\ar[ll]_{\tT(\iota_0)}\ar[ull]_{\tT(j)} ,}
\end{equation}
where $j$ and $\iota_0$ are unique ring homomorphisms from the zero object $\{0\}$ in the category of rings. The existence of the unique truss morphism $\widetilde{\varphi}: \tT(T_0) = T\boxplus\{0\}\lra \tT(R)$ follows by the universal property of the coproduct. Since 
$$
\widetilde{\varphi}\circ \tT(\iota_0)(0)=\widetilde{\varphi}(0)=\tT(j)(0)=0_R,
$$ 
$\widetilde{\varphi}=\tT(\reallywidehat{\varphi})$ for some (unique) ring homomorphism $\reallywidehat{\varphi}:T_0\lra R$.
\end{proof}

The following theorem allows one to identify the universal ring extension $T_0$ with an infinite homothetic ring extension $T(e)$ of Definition~\ref{def.hom.ext.tr}.
 
 \begin{theorem}\label{thm.ext.0}
 Let $T$ be a truss and $e\in T$. For any $a\in T$ and $n\in \ZZ$ define the following elements of $T\boxplus \{0\}$:
 \begin{equation}\label{an}
 a[n] = 
 \begin{cases}
 \underbrace{a\, 0\, e\, 0\, e\ldots 0\, e}_{2n-1}\, , & n>0\\
 \underbrace{a\, e\, 0\,  e\, 0\ldots e\, 0}_{-2n+3}\, , & n\leq 0.
 \end{cases}
 \end{equation}
 Then:
 \begin{zlist}
 \item $T\boxplus \{0\} = \{ a[n]\; |\; a\in T, n\in \ZZ\}$.
 \item The map
 $$
 \chi_e: T_0\lra T(e), \qquad a[n]\lto (a,n),
 $$ 
 is a unique isomorphism of rings such that $\chi_e(a) = (a,1)$.
 \end{zlist}
 \end{theorem}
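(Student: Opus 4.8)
The plan is to reduce the whole statement to a single bookkeeping identity inside the additive group of $T_0$. Reading each word in \eqref{an} as an iterated heap operation and retracting $T\boxplus\{0\}$ at the absorber $0$, I would first establish that
$$
a[n]=a+_0(n-1)e \qquad\text{in } \gG(T\boxplus\{0\};0)=T_0,
$$
where $+_0=[-,0,-]$: for $n>0$ the block $[x,0,y]=x+_0 y$ collapses $a\,0\,e\,0\cdots 0\,e$ to $a+_0(n-1)e$, while for $n\le 0$ the block $[x,e,0]=x-_0 e$ collapses $a\,e\,0\cdots e\,0$ to $a-_0(1-n)e=a+_0(n-1)e$.

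For assertion (1) I would invoke the normal forms of $T\boxplus\{0\}$ recorded in \eqref{ele} (from \cite[Proposition~3.6]{BrzRyb:mod}) and match each with some $a[n]$. The forms $0$, $a$, $a\,e\,0$ and $a\,0\,e\cdots 0\,e$ are at once $e[0]$, $a[1]$, $a[0]$ and $a[n]$. The only subtle case is $0\,a\,0\,e\cdots e\,0$, whose distinguished letter sits in an even slot; here I would pass to the heap reflection at $e$, using $[e,a,e]=-_0 a+_0 2e$ to rewrite the word as $[e,a,e]+_0 m\,e=[e,a,e][m+1]$ with $[e,a,e]\in T$. This makes $(a,n)\mapsto a[n]$ surjective onto $T\boxplus\{0\}$.

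For assertion (2) I would produce the ring homomorphism painlessly through Lemma~\ref{lem.ext.0}. The assignment $a\mapsto(a,1)$ is a truss homomorphism $T\to\tT(T(e))$ — it is the inverse of the projection identifying the sub-truss $\{(a,1)\}$ with $\tT(\sige,e^2)=T$ (Theorems~\ref{thm.h.ext} and~\ref{thm.hom}) — so it extends to a unique ring homomorphism $\reallywidehat{\varphi}:T_0\to T(e)$ with $\reallywidehat{\varphi}(a)=(a,1)$. Additivity together with $a[n]=a+_0(n-1)e$ and the fact that $e$ is neutral in $\gG(T;e)$ then yields
$$
\reallywidehat{\varphi}(a[n])=(a,1)+(n-1)(e,1)=([a,e,e],n)=(a,n),
$$
so $\reallywidehat{\varphi}=\chi_e$. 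By (1) every element of $T_0$ is some $a[n]$, and $\reallywidehat{\varphi}(a[n])=(a,n)$ exhausts the underlying set $\gG(T;e)\times\ZZ$ of $T(e)$; thus $\reallywidehat{\varphi}$ is onto, and it is injective because $\reallywidehat{\varphi}(a[n])=\reallywidehat{\varphi}(a'[n'])$ forces $(a,n)=(a',n')$ and hence $a[n]=a'[n']$. Uniqueness is immediate from the universal property (equivalently, because $T$ generates $T_0$ additively).

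The step I expect to be the real obstacle — and the one this universal-property route is designed to bypass — is multiplicativity. A direct verification must reconcile the two retracts: expanding the product in $T_0$ by bilinearity gives
$$
a[n]\cdot b[m]=ab+_0(m-1)ae+_0(n-1)eb+_0(n-1)(m-1)e^2,
$$
a $+_0$-combination of the truss products $ab,ae,eb,e^2\in T$, and one must convert it into a $+_e$-combination using the lemma $\textstyle\sum_0 k_iy_i=(\sum_e k_iy_i)+_0(K-1)e$, with $K=\sum_i k_i$, proved by induction from $x+_0 y=(x+_e y)+_0 e$. The delicate and decisive point is that the total coefficient collapses to $K=1+(m-1)+(n-1)+(n-1)(m-1)=nm$, so the product equals $c[nm]$ with $c$ the first coordinate of \eqref{te.prod}; this is precisely what would force $\chi_e$ to respect multiplication. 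The two places to be careful are the parity bookkeeping in the $n\le 0$ branch of \eqref{an} and the verification that the coefficient total is exactly $nm$.
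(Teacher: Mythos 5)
Your proposal is correct and follows essentially the same route as the paper: the normal-form matching for assertion (1) (including the reflection $[e,a,e]$ for words with the $T$-letter in an even slot), the identity $a[n]=a+_0(n-1)e$ (which is the paper's \eqref{an0}), and the derivation of the ring homomorphism from the universal property of $T_0$ applied to the truss map $a\mapsto(a,1)$, with the coefficient count $1+(m-1)+(n-1)+(n-1)(m-1)=nm$ matching the paper's direct multiplicativity check. No gaps.
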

 \begin{proof}
 (1) Note that $e[0]= e\,e\,0 =0$, hence $e[0]=0$, $a[n]$, $n\geq 0$, describe all the elements of the first four types listed in \eqref{ele} (in particular $a[1]=a$). Finally,
 $$
 \begin{aligned}
 0\, a\, 0\, e\, 0\ldots e\, 0 &= e\, e\, 0\, a\, 0\, e\, 0\ldots e\, 0\\
 &= e\, a\, 0\, e\, 0\, e\, 0\ldots e\, 0\\
 &= [e, a, e]\, e\, 0\, e\, 0\, e\, 0\ldots e\, 0 = [e, a, e][n],
 \end{aligned}
 $$
 for the negative $n$ such that $-2n +1$ is equal to the length of the original word. This completes the proof of statement (1).
 
 (2) In view of the assertion of statement (1) and since there are no repetitions of elements listed as $a[n]$, it is clear that the map $\chi_e$ is a bijection. By the universal property of coproduct it is a unique homomorphism of heaps that fits the diagram
 \begin{equation}\label{uni.chi}
\xymatrix{&& \hH(T(e))&& \cr T \ar[rr]^{\iota_T}\ar[urr]^{\iota_{T(e)}} & &T\boxplus \{0\} \ar@{-->}[u]_{\chi_e} & & \{0\}\ar[ll]_{\iota_0}\ar[ull]_{j} ,}
\end{equation}
where $\iota_T: a\lto a=a[1]$, $\iota_0: 0\lto 0= e[0]$,  $j: 0\lto (e,0)$ and $\iota_{T(e)}: a\lto (a,1)$ are homomorphisms of heaps.  Note that the commutativity of the right hand triangle means that $\chi_e(0) = (e,0)$ so the zero of the ring $T_0$ is transformed to the zero of $T(e)$. Hence by the Lemma \ref{lem.ext.0} the map $\chi_e$ is a (unique) homomorphism of rings. 

The additivity of $\chi_e$ can also be checked directly by considering the following four cases. For positive $k,l\in \ZZ$,
 $$
 \begin{aligned}
 a[k]+b[l] &= \underbrace{a\, 0\, e\, 0\, e\ldots 0\, e}_{2k-1} 0 \underbrace{b\, 0\, e\, 0\, e\ldots 0\, e}_{2l-1}\\
 &= \underbrace{a\, 0\, b\, 0\, e\ldots 0\, e}_{2(k+l)-1} =  \underbrace{a\, e\,e\, 0\, b\, 0\, e\ldots 0\, e}_{2(k+l)+1}\\
&= \underbrace{a\, e\, b\, 0\, e 0\, e\ldots 0\, e}_{2(k+l)+1} =  \underbrace{[a, e, b]\, 0\, e\, 0\, e\ldots 0\, e}_{2(k+l)-1}= [a, e, b][{k+l}],
 \end{aligned}
 $$ 
 where we have used the symmetry to swap $e$ with $b$ and other  rules of operations in $T\boxplus\{0\}$. Therefore,
 $$
 \chi_e(a[k]+b[l] ) = ( [a, e, b], k+l) = (a+_e b, k+l) = (a,k)+(b,l) =  \chi_e(a[k])+\chi_e(b[l] ).
 $$ 
 The case of two non-positive indices is treated in a similar way. Now assume that $k>0$ and $l\leq 0$ are such that $k+l>1$. Then, for all $a,b\in T$,
 $$
  \begin{aligned}
 a[k]+b[l] &= \underbrace{a\, 0\, e\, 0\, e\ldots 0\, e}_{2k-1} 0  \underbrace{b\, e\, 0\,  e\, 0\ldots e\, 0}_{-2l+3}\\
 &= \underbrace{a\, 0\, e\, 0\, e\ldots 0\, e}_{2k-1} 0  \underbrace{0\, e\, 0\,  e\, 0\ldots e\, b}_{-2l+3}\\
  &= \underbrace{a\, 0\, e\, 0\, e\ldots 0\, b}_{2(k+l)-1} = \underbrace{a\, 0\, b\, e\, 0\, e\ldots 0}_{2(k+l)-1}\\
  &= \underbrace{a\, e\, e\,  0\,b\, e\ldots 0\, e}_{2(k+l)+1} = \underbrace{[a, e, b]\,0\, e\ldots 0\, e}_{2(k+l)-1}=  [a, e, b][{k+l}],
 \end{aligned}
 $$
 where the second, fourth and sixth equalities use the freedom of swapping elements in positions with matching parities and the remaining equalities use the cancellation of repeated letters and the application of the heap operation in $T$. Therefore, $\chi_e(a[k]+b[l]) = \chi_e(a[k])+\chi_e(b[l])$ also in this case. The case of $k+l\leq 1$ is treated in a similar way. In conclusion, the map $\chi_e$ is an isomorphism of abelian groups.
 
 To check directly that $\chi_e$ preserve multiplication observe that in the ring $T_0$, for all $a,b\in T$ and $k,l\in \ZZ$,
 \begin{equation}\label{an0}
 a[1]\cdot b[1] = ab[1],\quad  a[k]+b[l] = (a+_e b)[k+l],\quad a[k] = a[1] + (k-1)e[1], 
 \end{equation}
 since $a[1] =a$, $+=+_0$ in $T_0$ and by the additivity of $\chi_e$. Here the concatenation $ab$ means the product in $T$. Using \eqref{an0} and with the understanding that $+$ in-between elements of $T$ means $+_e$ (i.e.\ the operation in the retract $\gG(T;e)$), while $+$ in-between elements of $T_0$  means $+_0$, we can thus compute, 
 $$
 \begin{aligned}
 a[k]\cdot b[l] & = (a[1] + (k-1)e[1])\cdot (b[1] + (l-1)e[1]) \\
 &=ab[1] + (l-1)ae[1]+(k-1)eb[1] + (k-1)(l-1)e^2[1]\\
 &=(ab + (l-1) ae+(k-1)eb + (k-1)(l-1)e^2)[kl]\\
 &=(ab + (l-1) ae+(k-1)eb + (k-1)(l-1)e^2)[kl].
 \end{aligned}
 $$ 
 Hence,
 $$
 \begin{aligned}
 \chi_e(a[k]\cdot b[l]) &= (ab + (l-1) ae +(k-1)eb + (k-1)(l-1)e^2,kl)\\
 & = (a,k)(b,l) = \chi_e(a[k])\chi_e( b[l]),
 \end{aligned}
 $$
 by \eqref{te.prod}.
 Therefore, $\chi_e$ is an isomorphism of rings as stated.
 \end{proof}
 
 The identification of the homothetic extension ring $T(e)$ with the ring $T_0$ allows one to reveal the universality of the former.
 
  \begin{corollary}\label{cor.univ}
 For any truss $T$ and $e\in T$, the infinite homothetic extension ring $T(e)$ has the same universal property as $T_0$. That is, 
 for any ring $R$ and a homomorphism of trusses $\varphi: T\lra \tT(R)$ there exists a unique ring homomorphism $\widehat{\varphi}: T(e)\lra R$ such that  $
\tT(\widehat{\varphi}) \circ \iota_{T(e)} = \varphi.
$
\end{corollary}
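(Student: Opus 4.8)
The plan is to transfer the universal property of $T_0$ from Lemma~\ref{lem.ext.0} along the ring isomorphism $\chi_e\colon T_0\lra T(e)$ produced in Theorem~\ref{thm.ext.0}. The one extra observation I would isolate first is that $\chi_e$ intertwines the two canonical truss inclusions, namely $\chi_e\circ\iota_T=\iota_{T(e)}$. This is immediate from Theorem~\ref{thm.ext.0}(2), where $\chi_e(a)=(a,1)$, together with $\iota_T(a)=a$ (viewed in $\tT(T_0)$) and $\iota_{T(e)}(a)=(a,1)$.

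For existence, given a ring $R$ and a truss homomorphism $\varphi\colon T\lra\tT(R)$, Lemma~\ref{lem.ext.0} supplies a unique ring homomorphism $\theta\colon T_0\lra R$ with $\tT(\theta)\circ\iota_T=\varphi$. I would then define $\widehat{\varphi}:=\theta\circ\chi_e^{-1}\colon T(e)\lra R$, which is a ring homomorphism as a composite of ring homomorphisms. Recalling that the functor $\tT$ is the identity on underlying functions, the required factorisation is verified by
$$
\tT(\widehat{\varphi})\circ\iota_{T(e)}=\theta\circ\chi_e^{-1}\circ\chi_e\circ\iota_T=\theta\circ\iota_T=\varphi,
$$
where the first equality uses $\chi_e\circ\iota_T=\iota_{T(e)}$.

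For uniqueness, suppose $\psi\colon T(e)\lra R$ is any ring homomorphism with $\tT(\psi)\circ\iota_{T(e)}=\varphi$. Then $\psi\circ\chi_e\colon T_0\lra R$ is a ring homomorphism satisfying $\tT(\psi\circ\chi_e)\circ\iota_T=\tT(\psi)\circ\iota_{T(e)}=\varphi$, so the uniqueness clause of Lemma~\ref{lem.ext.0} forces $\psi\circ\chi_e=\theta$; composing on the right with $\chi_e^{-1}$ gives $\psi=\theta\circ\chi_e^{-1}=\widehat{\varphi}$.

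The argument is purely formal, so I do not expect a genuine obstacle. The only point requiring care is the bookkeeping across $\tT$: since $\tT$ acts as the identity on morphisms, the truss-level identity $\chi_e\circ\iota_T=\iota_{T(e)}$ may be substituted directly into composites of ring homomorphisms, and it is exactly this interplay that makes the transfer of the universal property go through.
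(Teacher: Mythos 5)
Your argument is correct and is precisely the one the paper intends: the corollary is stated without proof immediately after Theorem~\ref{thm.ext.0}, the point being exactly that the universal property of $T_0$ from Lemma~\ref{lem.ext.0} transfers along the ring isomorphism $\chi_e$, whose compatibility $\chi_e\circ\iota_T=\iota_{T(e)}$ is encoded in the diagram \eqref{uni.chi}. Your explicit existence and uniqueness bookkeeping fills in the routine details the paper leaves implicit.
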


\begin{lemma}\label{lem.ep}
Let $T$ be a truss and $e\in T$.  The truss homomorphism $\iota_{T(e)}:T\lra \tT(T(e))$ has the following cancellation property. For all  truss homomorphisms $\varphi,\psi:\tT(T(e))\lra U$ such that $\varphi(e,0) = \psi(e,0)$,  
$$
\varphi\circ \iota_{T(e)}=\psi\circ \iota_{T(e)}\ \  \text{implies}\ \  \varphi=\psi.
$$
In particular, if $U=\tT(R)$ for a ring $R$, then for all ring homomorphisms $f,g: T(e)\lra R$, 
$$
\tT(f)\circ \iota_{T(e)}=\tT(g)\circ \iota_{T(e)}\ \  \text{implies}\ \  f=g.
$$
\end{lemma}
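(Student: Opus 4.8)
The plan is to exploit that, as an abelian group, $T(e)$ is generated by the image of $\iota_{T(e)}$ together with the distinguished element $(e,0)$, which is in fact its additive zero. Recall from the proof of Theorem~\ref{thm.ext.0} that $\chi_e(0)=(e,0)$, so the additive group of $T(e)$ is $\gG(T;e)\times\ZZ$ with neutral element $(e,0)$, and the heap $\hH(T(e))$ is the one attached to this group. First I would record the elementary decomposition
$$
(a,k) = (a,1) +_{(e,0)} (k-1)(e,1),
$$
valid for all $a\in T$ and $k\in\ZZ$, where $+_{(e,0)}$ is the group operation of $\gG\big(T(e);(e,0)\big)$. It holds because $n(e,1)=(e,n)$ for every integer $n$ (the first coordinate stays at the additive zero $e$ of $\gG(T;e)$) and $a+_e e = a$. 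Since each $(a,1)$ and $(e,1)$ lies in $\iota_{T(e)}(T)$, this already shows that $\iota_{T(e)}(T)$ generates $\gG\big(T(e);(e,0)\big)$ as a group.

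Next I would pass to retracts. Put $o:=\varphi(e,0)=\psi(e,0)$. Because $\varphi$ and $\psi$ are heap homomorphisms carrying the base point $(e,0)$ to $o$, each induces a group homomorphism $\gG\big(T(e);(e,0)\big)\lra \gG(U;o)$: indeed, for all $x,y$,
$$
\varphi\big([x,(e,0),y]\big) = [\varphi(x),o,\varphi(y)] = \varphi(x)+_o\varphi(y),
$$
and likewise for $\psi$. By hypothesis $\varphi\circ\iota_{T(e)}=\psi\circ\iota_{T(e)}$, so these two group homomorphisms agree on the generating set $\iota_{T(e)}(T)$ and therefore coincide; hence $\varphi(a,k)=\psi(a,k)$ for all $(a,k)$, i.e.\ $\varphi=\psi$. (Equivalently, one may avoid retracts and simply expand both $\varphi(a,k)$ and $\psi(a,k)$ by applying the heap-homomorphism property to the decomposition above, written as an iterated ternary word in the letters $(a,1)$, $(e,1)$ and $(e,0)$; the two sides then match letter by letter using $\varphi(e,0)=\psi(e,0)$.)

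Finally, for the special case $U=\tT(R)$ I would observe that the hypothesis $\varphi(e,0)=\psi(e,0)$ is automatic: if $f,g:T(e)\lra R$ are ring homomorphisms then $f(e,0)=0_R=g(e,0)$, since $(e,0)$ is the additive zero of $T(e)$ and ring maps preserve zero. Applying the first part to $\varphi=\tT(f)$ and $\psi=\tT(g)$ gives $\tT(f)=\tT(g)$, and since $\tT$ is the identity on underlying functions this is exactly $f=g$.

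The only genuinely delicate point is the bookkeeping that confirms $(e,0)$ is the additive identity of $T(e)$ and that the displayed decomposition holds; everything else is the standard principle that two group homomorphisms agreeing on a generating set are equal. Once the generation statement is secured, the argument is immediate.
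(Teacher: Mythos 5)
Your proof is correct and is in substance the same as the paper's: the paper establishes the identity $(a,k+1)=[(a,k),(e,0),(e,1)]$ and runs an induction on $k$ (separately for positive and negative integers), which is exactly the unrolled form of your observation that $\iota_{T(e)}(T)$ generates $\gG(T(e);(e,0))$ and that heap morphisms sending $(e,0)$ to a common point induce group homomorphisms of the retracts agreeing on that generating set. Your handling of the ring case via $f(e,0)=0_R=g(e,0)$ also matches the paper's final step.
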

\begin{proof}
Recall that elements of $T(e)$ are of the form $(a,k)\in T\times \ZZ$. We will prove the lemma by induction on $k$ (separately for positive and negative integers).  Let $\varphi,\psi:\tT(T(e))\lra U$ be truss morphisms such that $\varphi(u,0) = \psi(u,0)$, for some $u\in T$, and 
$\varphi\circ \iota_{T(e)}=\psi\circ \iota_{T(e)}$. The second condition means that $\varphi(a,1) = \psi(a,1)$, for all $a\in T$. Assume that  $\varphi(a,k) = \psi(a,k)$ for some positive $k\in \ZZ$ and all $a\in T$. Then,
$$
\begin{aligned}
\varphi(a,k+1) &= \varphi\left([a,u,u],k-0 +1\right)  = \varphi\left([(a,k), (u,0), (u,1)]\right)\\
&= \left[\varphi(a,k), \varphi(u,0), \varphi(u,1)\right] = \left[\psi(a,k), \psi(u,0), \psi(u,1)\right] = \psi(a,k+1), 
\end{aligned}
$$
since both $\varphi$ and $\psi$ are heap homomorphisms. Similarly, for all $k\leq 1$, if $\varphi(a,k) = \psi(a,k)$, then 
$$
\begin{aligned}
\varphi(a,k-1) &= \varphi\left([a,u,u],k-1 +0\right)  = \varphi\left([(a,k), (u,1), (u,0)]\right)\\
&= \left[\varphi(a,k), \varphi(u,1), \varphi(u,0)\right] = \left[\psi(a,k), \psi(u,1), \psi(u,0)\right] = \psi(a,k-1). 
\end{aligned}
$$
Therefore, $\varphi = \psi$. Finally, since $(e,0)$ is the zero of the ring $T(e)$ the first condition is satisfied with $u=e$ and hence the second assertion follows.
\end{proof}

Since, for every $e\in T$, $T(e)$ is isomorphic to $T_0$, the truss homomorphism $\iota_T: T\lra \tT(T_0)$ defined in Lemma~\ref{lem.ext.0} has a similar cancellation property:
\begin{corollary}\label{cor.ep}
For all trusses $T$ and for all ring homomorphisms $f,g: T_0\lra R$, if 
$\tT(f)\circ \iota_{T}=\tT(g)\circ \iota_{T}$, then  $f=g$.
\end{corollary}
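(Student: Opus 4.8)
The plan is to deduce the statement from Lemma~\ref{lem.ep} by transporting the cancellation property of $\iota_{T(e)}$ across the ring isomorphism $\chi_e\colon T_0\lra T(e)$ of Theorem~\ref{thm.ext.0}. The whole argument is a transport of structure, the only genuine point being the compatibility of the two canonical truss inclusions under $\chi_e$.

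First I would fix an arbitrary $e\in T$ and record the key compatibility $\tT(\chi_e)\circ\iota_T=\iota_{T(e)}$ as truss homomorphisms $T\lra\tT(T(e))$. This is immediate from Theorem~\ref{thm.ext.0}(2): for $a\in T$ one has $\iota_T(a)=a=a[1]$ in $T_0$, whence $\chi_e(a[1])=(a,1)=\iota_{T(e)}(a)$. Following the convention recalled before Lemma~\ref{lem.ext.0}, I identify a ring homomorphism with its underlying truss homomorphism, so that $\tT(\chi_e)$ and $\chi_e$ act by the same function.

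Next, given ring homomorphisms $f,g\colon T_0\lra R$ with $\tT(f)\circ\iota_T=\tT(g)\circ\iota_T$, I would pass to the ring homomorphisms $f\circ\chi_e^{-1},\,g\circ\chi_e^{-1}\colon T(e)\lra R$, which are well defined since $\chi_e$ is a ring isomorphism. Using the compatibility above together with functoriality of $\tT$,
$$
\tT(f\circ\chi_e^{-1})\circ\iota_{T(e)}=\tT(f\circ\chi_e^{-1})\circ\tT(\chi_e)\circ\iota_T=\tT(f)\circ\iota_T,
$$
and symmetrically $\tT(g\circ\chi_e^{-1})\circ\iota_{T(e)}=\tT(g)\circ\iota_T$; the hypothesis then forces $\tT(f\circ\chi_e^{-1})\circ\iota_{T(e)}=\tT(g\circ\chi_e^{-1})\circ\iota_{T(e)}$. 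Now the second assertion of Lemma~\ref{lem.ep} applies to $f\circ\chi_e^{-1}$ and $g\circ\chi_e^{-1}$ and gives $f\circ\chi_e^{-1}=g\circ\chi_e^{-1}$, and composing with $\chi_e$ on the right yields $f=g$.

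I do not expect a real obstacle here: there is no side condition to verify, because the zero-compatibility hypothesis appearing in the first part of Lemma~\ref{lem.ep} is automatic for ring homomorphisms (they preserve the zero $(e,0)$ of $T(e)$), which is precisely why its second assertion is unconditional. The only step demanding attention is the bookkeeping in the displayed chain of equalities — in particular checking $\tT(\chi_e)\circ\iota_T=\iota_{T(e)}$ exactly, so that the two equalizer conditions line up — after which the conclusion is immediate from Lemma~\ref{lem.ep}.
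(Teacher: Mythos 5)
Your argument is correct and is essentially the paper's own proof: both transport the cancellation property of $\iota_{T(e)}$ from Lemma~\ref{lem.ep} back to $\iota_T$ via the ring isomorphism $\chi_e$ of Theorem~\ref{thm.ext.0}, using the compatibility $\tT(\chi_e)\circ\iota_T=\iota_{T(e)}$ (which you, unlike the paper's slightly garbled phrasing, state in the correctly typed form). Your remark that the zero-compatibility hypothesis of Lemma~\ref{lem.ep} is automatic for ring homomorphisms is also exactly the point the paper relies on.
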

\begin{proof}
If $\tT(f)\circ \iota_{T}=\tT(g)\circ \iota_{T}$ then $\tT(f)\circ \iota_{T}\circ \tT(\chi_e^{-1})=\tT(g)\circ \iota_{T}\circ \tT(\chi_e^{-1})$, where $\chi_e: T_0\lra T(e)$ is the ring isomorphism constructed in Theorem~\ref{thm.ext.0}. Since $\iota_{T}\circ \tT(\chi_e^{-1}) = \iota_{T(e)}$ by the diagram \eqref{uni.chi}, the assertion follows from Lemma~\ref{lem.ep}.
\end{proof}

The universal property of the ring $T_0$ described in Lemma~\ref{lem.ext.0} gives rise to a functor $(-)_0:\truss \lra \ring$ between categories of trusses and rings, see \cite[Section IV, Theorem 2(ii)]{Mac:lane}. For the sake of keeping the presentation self-contained, we add a short proof. The functor is given for all trusses $T$ by  $T\lto T_0$,  and for all morphisms $\varphi\in \hom{\truss}{T}{U}$ by $\varphi\lto \varphi_0:=\widehat{\iota_{U}\circ \varphi}$, where $\widehat{~~}$ denotes the ring homomorphism induced from a truss homomorphism via the diagram in  Lemma~\ref{lem.ext.0}. Observe that, by Lemma \ref{lem.ext.0}, for all $\varphi\in \hom{\truss}{T}{U}$ and $\psi\in \hom{\truss}{U}{V}$,
 $$
 \begin{aligned}
\tT(\psi_0\circ \varphi_0) \circ \iota_{T} &= 
\tT(\widehat{\iota_{V}\circ \psi})\circ \tT(\widehat{\iota_{U}\circ \varphi})\circ \iota_{T}=\tT(\widehat{\iota_{V}\circ \psi})\circ \iota_{U}\circ \varphi \\
& = 
\iota_{V}\circ \psi\circ \varphi = \tT(\widehat{\iota_{V}\circ \psi\circ \varphi})\circ \iota_{T} = \tT((\psi\circ \varphi)_0) \circ \iota_{T}.
 \end{aligned}
 $$
Lemma~\ref{lem.ep} implies that 
  $$
\psi_0\circ \varphi_0 =   (\psi\circ \varphi)_0.
  $$
 Thus the composition is preserved by the assignment. One can easily check that identity morphisms are preserved. Hence, $(-)_0: \truss\lra \ring$ is a functor.
 
 The following proposition follows by \cite[Section IV, Theorem 2(ii)]{Mac:lane}, but for the sake of the unaccustomed reader, we sketch a proof.
 
 \begin{proposition}\label{prop.adjoint.truss}
 The functor $(-)_0$ is left adjoint to the functor $\tT: \ring\lra \truss$.
 \end{proposition}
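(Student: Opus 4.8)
The plan is to read off the adjunction directly from the universal property established in Lemma~\ref{lem.ext.0}, by exhibiting for each truss $T$ and each ring $R$ a bijection
$$
\Theta_{T,R}\colon \hom{\ring}{T_0}{R}\lra \hom{\truss}{T}{\tT(R)}, \qquad f\lto \tT(f)\circ \iota_T,
$$
and then verifying that $\Theta$ is natural in both variables. Since $\tT$ acts as the identity on morphisms, $\tT(f)$ is just $f$ reread as a truss homomorphism, so $\Theta_{T,R}$ is manifestly well defined, and the family $(\iota_T)_T$ will serve as the unit of the adjunction in the sense of \cite[Section~IV.1, Theorem~2]{Mac:lane}.

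First I would check that each $\Theta_{T,R}$ is bijective with $T$ and $R$ fixed. Surjectivity is precisely the existence clause of Lemma~\ref{lem.ext.0}: any $\varphi\in\hom{\truss}{T}{\tT(R)}$ satisfies $\varphi=\tT(\widehat{\varphi})\circ\iota_T=\Theta_{T,R}(\widehat{\varphi})$. Injectivity is the cancellation property recorded in Corollary~\ref{cor.ep}: if $\tT(f)\circ\iota_T=\tT(g)\circ\iota_T$ then $f=g$; equivalently, it is the uniqueness clause of Lemma~\ref{lem.ext.0}. Hence $\Theta_{T,R}$ is a bijection with inverse $\varphi\lto\widehat{\varphi}$.

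Next I would establish naturality. For a ring homomorphism $h\colon R\lra R'$, naturality in the ring variable is immediate from functoriality of $\tT$, since
$$
\Theta_{T,R'}(h\circ f)=\tT(h\circ f)\circ\iota_T=\tT(h)\circ\tT(f)\circ\iota_T=\tT(h)\circ\Theta_{T,R}(f).
$$
For a truss homomorphism $\psi\colon T'\lra T$, naturality in the truss variable is the identity $\Theta_{T',R}(f\circ\psi_0)=\Theta_{T,R}(f)\circ\psi$, and expanding the left-hand side gives $\tT(f)\circ\tT(\psi_0)\circ\iota_{T'}$. The crux is therefore the single identity $\tT(\psi_0)\circ\iota_{T'}=\iota_T\circ\psi$; but this is exactly the defining diagram of $\psi_0=\widehat{\iota_T\circ\psi}$ from the construction of the functor $(-)_0$ preceding the proposition. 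Substituting it yields $\tT(f)\circ\iota_T\circ\psi=\Theta_{T,R}(f)\circ\psi$, as required.

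I expect the only point needing any care—the ``hard part,'' such as it is—to be naturality in the truss variable, since that is the one place where $(-)_0$ interacts nontrivially with the unit maps $\iota_T$; everywhere else $\tT$ being the identity on morphisms trivialises the bookkeeping. Even there, however, the work collapses to the defining property of $\psi_0$ already secured in the construction of the functor, so no genuinely new computation arises. With the natural isomorphism $\Theta$ in place, $(-)_0\dashv\tT$ follows, with unit $\iota_T$ and counit obtained by evaluating $\Theta_{\tT(R),R}^{-1}$ at $\id_{\tT(R)}$.
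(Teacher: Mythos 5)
Your proof is correct and follows essentially the same route as the paper: the bijection $f\mapsto \tT(f)\circ\iota_T$ (onto by the existence clause of Lemma~\ref{lem.ext.0}, injective by Corollary~\ref{cor.ep}), with naturality in the ring variable being trivial from $\tT$ acting as the identity on morphisms and naturality in the truss variable reducing to the defining identity $\tT(\psi_0)\circ\iota_{T'}=\iota_T\circ\psi$ of $\psi_0=\widehat{\iota_T\circ\psi}$. No substantive differences.
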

 \begin{proof}
 For all trusses $T$ and rings $R$ let us consider the functions
  $$
 \alpha_{T,R}:\hom{\ring}{T_0}{R}\lra \hom{\truss}{T}{\tT(R)}, \qquad 
 f\lto \tT(f)\circ \iota_{T}.
 $$ 
 We will show that these functions define a natural isomorphism of bifunctors $\alpha:  \hom{\ring}{(-)_0}{-}\lra \hom{\truss}{-}{\tT(-)}$.
 
The functions $\alpha_{T,R}$ are injective  by Corollary~\ref{cor.ep}. The universal property in Lemma~\ref{lem.ext.0} immediately implies that the $\alpha_{T,R}$ are also onto. 
For naturality, take any rings $R$, $S$ and trusses $T$, $U$, and consider homomorphisms $f:T_0\lra R$, $\varphi:U\lra T$ and $g:R\lra S$.
 Then
$$
\alpha_{U,R}(f\circ \reallywidehat{\iota_{T}\circ \varphi})=\tT(f\circ\reallywidehat{\iota_{T}\circ \varphi})\circ\iota_{U}=\tT(f)\circ\iota_{T}\circ \varphi=\alpha_{T,R}(f)\circ \varphi,
$$
by Lemma~\ref{lem.ext.0}.
Similarly,
$$
\alpha_{T,S}(g\circ f)=\tT(g\circ f)\circ \iota_{T}=\tT(g)\circ \alpha_{T,R}(f),
$$
as $\tT(g)=g$ as functions.
Therefore $\alpha$ is a natural isomorphism and  the extension to rings functor $(-)_0$ is the left adjoint to $\tT$.
 \end{proof}
 
 Combining Proposition~\ref{prop.adjoint.truss} with Theorem~\ref{thm.ext.0} we thus obtain
 
 \begin{corollary}\label{cor.adjoint.truss}
 For all rings $R$ and trusses $T$, and for all $e\in T$,
 $$
 \hom{\ring}{T(e)}{R}\cong \hom{\truss}{T}{\tT(R)}.
 $$
 \end{corollary}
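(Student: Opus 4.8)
The plan is to read the corollary off the two results it cites, namely the adjunction $(-)_0\dashv\tT$ of Proposition~\ref{prop.adjoint.truss} and the ring isomorphism $T_0\cong T(e)$ of Theorem~\ref{thm.ext.0}, by composing the corresponding bijections.

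First I would specialise Proposition~\ref{prop.adjoint.truss} to the objects $T$ and $R$, which gives the bijection
$$
\alpha_{T,R}:\hom{\ring}{T_0}{R}\lra\hom{\truss}{T}{\tT(R)},\qquad f\lto\tT(f)\circ\iota_T .
$$
Next, Theorem~\ref{thm.ext.0} provides a ring isomorphism $\chi_e:T_0\lra T(e)$. Since $\chi_e$ is invertible, precomposition with it is a bijection
$$
\hom{\ring}{T(e)}{R}\lra\hom{\ring}{T_0}{R},\qquad g\lto g\circ\chi_e ,
$$
whose inverse is precomposition with $\chi_e^{-1}$. Composing these two bijections produces the desired isomorphism $\hom{\ring}{T(e)}{R}\cong\hom{\truss}{T}{\tT(R)}$.

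The single point worth checking is that the composite has the expected form $g\lto\tT(g)\circ\iota_{T(e)}$, so that the identification is the natural one. Unwinding definitions, the composite sends $g$ to $\alpha_{T,R}(g\circ\chi_e)=\tT(g\circ\chi_e)\circ\iota_T=\tT(g)\circ\tT(\chi_e)\circ\iota_T$; and the commutativity of the left-hand triangle in diagram~\eqref{uni.chi}, which at the level of functions reads $\tT(\chi_e)\circ\iota_T=\iota_{T(e)}$, rewrites this as $\tT(g)\circ\iota_{T(e)}$, as wanted.

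There is no genuine obstacle here: the entire content is carried by the two cited results, and the remaining task is purely that of matching up the maps. Alternatively, I could bypass the adjunction and argue directly from Corollary~\ref{cor.univ}: that corollary asserts precisely that $g\lto\tT(g)\circ\iota_{T(e)}$ is a well-defined surjection of $\hom{\ring}{T(e)}{R}$ onto $\hom{\truss}{T}{\tT(R)}$ (the existence and uniqueness of the factorising ring homomorphism $\widehat{\varphi}$), while its injectivity is exactly the cancellation property of $\iota_{T(e)}$ recorded in Lemma~\ref{lem.ep}. Either route yields the stated bijection.
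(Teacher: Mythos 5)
Your proposal is correct and matches the paper's argument, which obtains the corollary precisely by combining the adjunction bijection of Proposition~\ref{prop.adjoint.truss} with the ring isomorphism $\chi_e\colon T_0\lra T(e)$ of Theorem~\ref{thm.ext.0}. Your extra verification that the composite bijection is the natural map $g\lto\tT(g)\circ\iota_{T(e)}$, via the left triangle of diagram~\eqref{uni.chi}, is a harmless and welcome addition.
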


 \section{Minimality of homothetic extensions of trusses}\label{sec.min}
In Section~\ref{sec.univ} we have described the universal property of the ring $T(e)$ and shown that the assignment of the ring $T_0$ to a truss $T$ is functorial. It seems quite natural that when one extends a truss $T$ to a ring, one would like to obtain as ``small" a ring as possible. Therefore, in this section we will use the universal property to describe different kinds of ``smallness" of truss extensions into rings. To fix notation, whenever we write $\reallywidehat{\varphi}$ we think of a unique filler of the diagram from Corollary \ref{cor.univ}.
 
\begin{definition}\label{def.locsmal}
Let $T$ be a truss, $R$ a ring and let $\eta_{R}:T\lra \tT(R)$ be an injective homomorphism of trusses. We say that
$R$ is a {\em locally small extension of $T$} if there is no subring $S\subsetneq R$ such that  $\eta_R(T)\subseteq S$.
\end{definition}
\begin{proposition}\label{prop.iso.locsmal}
Let $T$ be a truss and $R$ be an extension of $T$ into a ring with injection $\eta_{R}:T\lra \tT(R)$.  Then 
$R$ is a locally small extension 
if and only if $R= \im(\reallywidehat{\eta_R})\cong T(e)/\ker({\reallywidehat{\eta_{R}}})$, for all $e\in T$.
\end{proposition}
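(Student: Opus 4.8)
The plan is to show that $\im(\reallywidehat{\eta_R})$ is precisely the smallest subring of $R$ containing $\eta_R(T)$, from which both implications follow at once; the displayed isomorphism is then merely the first isomorphism theorem for rings applied to $\reallywidehat{\eta_R}:T(e)\lra R$.

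First I would record that, for any $e\in T$, the image $\im(\reallywidehat{\eta_R})$ is a subring of $R$ (being the image of a ring homomorphism) and that it contains $\eta_R(T)$. The latter holds because Corollary~\ref{cor.univ} gives $\tT(\reallywidehat{\eta_R})\circ\iota_{T(e)}=\eta_R$, so $\eta_R(T)=\reallywidehat{\eta_R}(\iota_{T(e)}(T))\subseteq\im(\reallywidehat{\eta_R})$.

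The key step is minimality. Let $S\subseteq R$ be any subring with $\eta_R(T)\subseteq S$, and write $\iota\colon S\hookrightarrow R$ for the inclusion. Then $\eta_R$ corestricts to a truss homomorphism $\eta_S\colon T\lra\tT(S)$ with $\tT(\iota)\circ\eta_S=\eta_R$. Applying Corollary~\ref{cor.univ} to $\eta_S$ and the ring $S$ yields a ring homomorphism $\reallywidehat{\eta_S}\colon T(e)\lra S$ with $\tT(\reallywidehat{\eta_S})\circ\iota_{T(e)}=\eta_S$. Composing with $\iota$ and using that $\tT$ is the identity on underlying functions, the ring homomorphism $\iota\circ\reallywidehat{\eta_S}\colon T(e)\lra R$ satisfies $\tT(\iota\circ\reallywidehat{\eta_S})\circ\iota_{T(e)}=\tT(\iota)\circ\tT(\reallywidehat{\eta_S})\circ\iota_{T(e)}=\tT(\iota)\circ\eta_S=\eta_R$. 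By the uniqueness clause in Corollary~\ref{cor.univ}, $\iota\circ\reallywidehat{\eta_S}=\reallywidehat{\eta_R}$, whence $\im(\reallywidehat{\eta_R})=\iota(\im(\reallywidehat{\eta_S}))\subseteq S$. Thus $\im(\reallywidehat{\eta_R})$ is contained in every subring of $R$ containing $\eta_R(T)$; in particular it is independent of the choice of $e$.

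With minimality in hand the equivalence is immediate. If $R=\im(\reallywidehat{\eta_R})$, then any subring $S$ with $\eta_R(T)\subseteq S$ satisfies $R=\im(\reallywidehat{\eta_R})\subseteq S\subseteq R$, so $S=R$ and no proper such subring exists, i.e.\ $R$ is locally small. Conversely, if $R$ is locally small, the subring $\im(\reallywidehat{\eta_R})$ contains $\eta_R(T)$ and hence cannot be proper, forcing $R=\im(\reallywidehat{\eta_R})$. Finally $R=\im(\reallywidehat{\eta_R})\cong T(e)/\ker(\reallywidehat{\eta_R})$ by the first isomorphism theorem, and this holds for every $e\in T$ since the image is $e$-independent. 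The one point to treat with care is the passage between truss homomorphisms and ring homomorphisms under $\tT$ when invoking the uniqueness of the universal arrow; this is precisely the identification of morphisms with their underlying functions stressed before Lemma~\ref{lem.ext.0}, and it is what makes the uniqueness in Corollary~\ref{cor.univ} applicable to $\iota\circ\reallywidehat{\eta_S}$.
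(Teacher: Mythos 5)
Your proof is correct and follows essentially the same route as the paper's: both directions rest on the observation that $\im(\reallywidehat{\eta_R})$ is a subring containing $\eta_R(T)$, and on using the uniqueness clause of Corollary~\ref{cor.univ} to identify $\iota\circ\reallywidehat{\eta_S}$ with $\reallywidehat{\eta_R}$ for any intermediate subring $S$, exactly as in the paper's commutative diagram. The only cosmetic difference is that you package this as a single minimality statement for $\im(\reallywidehat{\eta_R})$ (noting its independence of $e$) before deducing both implications, whereas the paper runs the two directions separately.
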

\begin{proof}
Let us assume that $R$ is a locally small extension of $T$ 
with $\eta_R: T\lra \tT(R)$ and take any $e\in T$.  
By Corollary \ref{cor.univ}, there exists a unique ring homomorphism $\reallywidehat{\eta_R}:T(e)\lra R$  such that $\tT(\reallywidehat{\eta_R})\circ\iota_{T(e)} = \eta_R$. Consequently, $S=\im(\reallywidehat{\eta_R})$ is a subring of $R$ such that $\eta_R(T) \subseteq S$, and hence $S=R$, by the local smallness of the extension $R$. The first isomorphism theorem for rings yields the required isomorphism.

In the converse direction, let $R={\im(\reallywidehat{\eta_R})}$ (or, equivalently, $R\cong T(e)/\ker({\reallywidehat{\eta_{R}}})$) and suppose that there is a subring $S$ of $R$ such that $\eta_R(T)\subseteq S$.  Let $j: S\lra R$ be the inclusion ring homomorphism and let $\eta_S: T\lra S$ be given by $T(j)\circ \eta_S = \eta_R$. All these maps together with the corresponding ring homomorphisms $\widehat{\eta_R}$ and $\widehat{\eta_S}$ can be fitted in the commutative diagram:
$$
\xymatrix{T \ar[rr]^-{\iota_{T(e)}}\ar[dr]_{\eta_S} \ar@/_2.0pc/@{->}[ddr]_-{\eta_R} && T(e) \ar[dl]^-{\tT(\widehat{\eta_S})} \ar@/^2.0pc/@{->}[ddl]^-{\tT(\widehat{\eta_R})} \\
& \tT(S) \ar[d]_{\tT(j)} &\\
& \tT(R). &}
$$
Hence $j\circ \reallywidehat{\eta_S} = \reallywidehat{\eta_R}$, which implies that $R=\im(\reallywidehat{\eta_R}) \subseteq S$, that is, $S=R$. Therefore, $R$ is a locally small ring extension of $T$.
\end{proof}

\begin{remark}\label{rem.iso.locsmal}
Proposition~\ref{prop.iso.locsmal} indicates that a locally small extension of a truss $T$ into a ring is not necessarily unique (not even up to isomorphism) and also provides one with a method of constructing such extensions. One needs simply to take any ring $R$ which embeds $T$ as a sub-truss of $\tT(R)$ via an inclusion map, say,  $\eta_R$, construct the corresponding unique ring homomorphism $\widehat{\eta_R}: T(e) \lra R$. The ring $S=\im( \widehat{\eta_R}) \subseteq R$ together with the corestriction of $\eta_R$ to $S$ is the required locally small extension (note that $\eta_R(T)\subseteq S$, since $\widehat{\eta_R}\circ \iota_{T(e)} = \eta_R$). The form of the ring map $\widehat{\eta_R}: T(e) \lra R$ can be easily worked out by inductive arguments. Explicitly,
$$
\widehat{\eta_R}: (a,k) \lto \eta_R(a)+ (k-1)\eta_R(e).
$$
In particular, the map $\reallywidehat{\iota_{T(e)}}$ corresponding to the canonical truss inclusion $\iota_{T(e)} : T \lra T(e)$ is equal to the identity map, and hence $T(e)$ is a locally small extension of $T$. In view of Proposition~\ref{prop.iso.locsmal} all other locally small extensions in $T$ correspond to suitable ideals in $T(e)$. 
\end{remark}

\begin{example}\label{ex.integers}
For any integer $r\geq 2$ or $r=-1$ consider the sub-truss of $\tT(\ZZ)$,
$$
T_r = r(r-1)\ZZ + r = \{r((r-1)k + 1)\; |\; k\in \ZZ\}.
$$
 Note that the multiplication in $T_r$ is well-defined since $r^2$ is congruent to $r$ modulo $r(r-1)$. Needless to say, this truss embeds in $\tT(\ZZ)$, with the embedding $\eta: n\lto n$. The map $\eta$ induces a homomorphism of rings $\widehat{\eta}: T_r(r)\lto \ZZ$, which in view of Remark~\ref{rem.iso.locsmal} reads
$$
\widehat{\eta}(r(r-1)k+r,l) = r((r-1)k+l),
$$
for all $k,l\in \ZZ$. Hence the ring $r\ZZ = \im (\widehat{\eta})$ is a locally small ring extension of $T_r$. Since $\im (\widehat{\eta})\subsetneq \ZZ$, $\ZZ$ is not a locally small extension of $T_r$ for all $r\neq -1$.
\end{example}
As indicated in Remark~\ref{rem.iso.locsmal} locally small extensions of a truss $T$ correspond to certain ideals $I$ in $T(e)$. By Theorem \ref{thm.h.ext}(3), $\iota_{T(e)}(T)$ is a paragon in $T(e)$ and thus
$$
\iI(T):=\tau_{(e,1)}^{(e,0)}(\iota_{T(e)}(T))=\{(a,0)\;|\; a\in T\}
$$
is an ideal in the homothetic extension $T(e)$. To ensure that the composite map
$$
\xymatrix{T\ar@{^{(}->}[r] & \iI(T) \ar@{^{(}->}[r] & T(e) \ar@{->>}[r] & T(e)/I},
$$
is an injective map, we need to require that $I$ intersects trivially with $\iI(T)$. In summary, we can state

\begin{lemma}\label{lem:ideal}
Let $T$ be a truss, $I$ be an ideal in $T(e)$ and $\pi:T(e)\lra T(e)/I$ be a canonical epimorphism. Then $T(e)/I$ is a locally small extension of $T$ into a ring with an injection 
$$
\pi\circ \iota_{T(e)}:T\lra T(e)/I
$$
if and only if 
$I\cap \iI(T)=\{(e,0)\}$.
\end{lemma}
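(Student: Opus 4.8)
The plan is to unwind the definition of a locally small extension (Definition~\ref{def.locsmal}) into its two separate demands on the map $\pi\circ\iota_{T(e)}$: first, that it be injective (it is named ``an injection'' in the statement), and second, that $T(e)/I$ contain no proper subring including the image $(\pi\circ\iota_{T(e)})(T)$. I would dispatch the second demand first, showing it is automatic, so that the whole equivalence collapses onto a statement purely about injectivity of $\pi\circ\iota_{T(e)}$.

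For the ``no proper subring'' part, I would invoke Remark~\ref{rem.iso.locsmal}, where it is recorded that $\reallywidehat{\iota_{T(e)}}=\id$ and hence that $T(e)$ is itself a locally small extension of $T$; equivalently, the set $\iota_{T(e)}(T)=\{(a,1)\mid a\in T\}$ generates $T(e)$ as a ring (in fact already as an additive group, since $(a,k)=(a,1)+(k-1)(e,1)$ in $\gG(T;e)\times\ZZ$). Because $\pi$ is a surjective ring homomorphism, $\pi(\iota_{T(e)}(T))$ then generates $\pi(T(e))=T(e)/I$, so no proper subring of $T(e)/I$ can contain $(\pi\circ\iota_{T(e)})(T)$. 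Thus $T(e)/I$ is a locally small extension of $T$ along $\pi\circ\iota_{T(e)}$ exactly when this map is injective.

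It then remains to show that $\pi\circ\iota_{T(e)}$ is injective if and only if $I\cap\iI(T)=\{(e,0)\}$. The key computation is in the additive group $\gG(T;e)\times\ZZ$, whose zero is $(e,0)$: for $a,b\in T$ one has $(a,1)-(b,1)=(a-_eb,0)$, which lies in $\iI(T)$, and conversely every element $(c,0)$ of $\iI(T)$ arises this way, for instance as $(c,1)-(e,1)$. Since cosets satisfy $(a,1)+I=(b,1)+I$ iff $(a,1)-(b,1)\in I$, and $(a,1)-(b,1)\in\iI(T)$ always, we obtain that $(a,1)+I=(b,1)+I$ iff $(a-_eb,0)\in I\cap\iI(T)$. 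For the implication $(\Leftarrow)$, if $I\cap\iI(T)=\{(e,0)\}$ then $(a-_eb,0)\in I$ forces $a-_eb=e$, i.e.\ $a=b$, giving injectivity. For $(\Rightarrow)$, given $(c,0)\in I\cap\iI(T)$, write it as $(c,1)-(e,1)$; then $(c,1)+I=(e,1)+I$, so injectivity yields $c=e$ and hence $(c,0)=(e,0)$. As $(e,0)\in I\cap\iI(T)$ always (it is the ring zero), the intersection equals exactly $\{(e,0)\}$.

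I expect the main obstacle to be conceptual rather than computational: the point is to recognise that the ``no proper subring'' half of local smallness comes for free, thanks to $\iota_{T(e)}(T)$ generating $T(e)$ and $\pi$ being onto, so that the entire content of the lemma reduces to the elementary coset computation above.
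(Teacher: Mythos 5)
Your proof is correct and follows essentially the same route as the paper, which presents the lemma as a summary of the preceding discussion: local smallness of the quotient is automatic because $\iota_{T(e)}(T)$ generates $T(e)$ (cf.\ Remark~\ref{rem.iso.locsmal} and Proposition~\ref{prop.iso.locsmal}) and $\pi$ is onto, so the whole content reduces to the injectivity of $\pi\circ\iota_{T(e)}$, which is exactly the coset computation $(a,1)-(b,1)=(a-_eb,0)\in\iI(T)$ that you carry out. Your write-up merely makes explicit the details the paper leaves implicit, and all the steps check out.
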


\begin{corollary}\label{cor.cyclic}
Let $T$ be a truss such that the (any) retract $\gG(T;e)$ has a finite exponent. Then the cyclic homothetic extension $T^{\mathrm{c}}(e)$ is a locally small extension of $T$.
\end{corollary}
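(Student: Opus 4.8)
The plan is to realise $T^{\mathrm{c}}(e)$ as a quotient $T(e)/I$ for a suitable ideal $I$ and then to invoke Lemma~\ref{lem:ideal}, whose trivial-intersection criterion does all the work.

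First I would unwind the definitions. By Definition~\ref{def.hom.ext.tr} the cyclic extension is $T^{\mathrm{c}}(e)=\rR(T;e)^{\mathrm{c}}(\eps,e^2)$, while the infinite one is $T(e)=\rR(T;e)(\eps,e^2)$. Writing $R=\rR(T;e)$ and letting $N$ denote the (finite) exponent of the additive group $\gG(T;e)$ of $R$ (well defined, since all retracts are isomorphic as groups), Theorem~\ref{thm.h.ext}(2a) identifies $R^{\mathrm{c}}(\eps,e^2)$ with the quotient $R(\eps,e^2)/I_N$ by the ideal $I_N=\{0\}\times N\ZZ$. Hence $T^{\mathrm{c}}(e)=T(e)/I_N$, so $T^{\mathrm{c}}(e)$ is exactly a quotient of the infinite homothetic extension of the type to which Lemma~\ref{lem:ideal} applies.

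Next I would rewrite $I_N$ in the coordinates used for $T(e)$. The point to keep straight is that the zero of the ring $R=\rR(T;e)$ is the chosen element $e$, so in the description of $T(e)$ as a ring on $\gG(T;e)\times\ZZ$ the ideal reads $I_N=\{(e,Nk)\mid k\in\ZZ\}$. On the other hand, Lemma~\ref{lem:ideal} records that $\iI(T)=\{(a,0)\mid a\in T\}$, and the additive identity of $T(e)$ is $(e,0)$. Then I would check the hypothesis of Lemma~\ref{lem:ideal}: an element lies in $I_N\cap\iI(T)$ precisely when it is simultaneously of the form $(e,Nk)$ and has vanishing second coordinate, and since $N\geq 1$ the relation $Nk=0$ forces $k=0$; therefore $I_N\cap\iI(T)=\{(e,0)\}$. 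With $I=I_N$ the hypothesis $I\cap\iI(T)=\{(e,0)\}$ of Lemma~\ref{lem:ideal} is met, and the lemma immediately yields that $T(e)/I_N=T^{\mathrm{c}}(e)$ is a locally small extension of $T$ via the injection $\pi\circ\iota_{T(e)}$.

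No step here is genuinely hard, and there is no real obstacle beyond careful bookkeeping: the only thing one must be attentive to is the identification of the zero of $\rR(T;e)$ with the element $e$, which is what positions $I_N$ correctly relative to $\iI(T)$ and makes the intersection collapse to the single element $(e,0)$.
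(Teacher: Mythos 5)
Your argument is correct and matches the paper's own proof: both identify $T^{\mathrm{c}}(e)$ as $T(e)/I_N$ with $I_N=\{(e,Nk)\mid k\in\ZZ\}$, observe that $(e,Nk)\in\iI(T)$ forces $k=0$, and conclude via Lemma~\ref{lem:ideal}. Your version merely spells out the bookkeeping (the zero of $\rR(T;e)$ being $e$) a bit more explicitly.
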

\begin{proof}
Let $N$ be the exponent of $\gG(T;e)$, and let $I_N = \{(e,Nk)\;|\; k\in \ZZ\}$ be the ideal of $T(e)$ that defines the cyclic homothetic extension $T^{\mathrm{c}}(e) = T(e)/I_N$. Then $(e,Nk) \in \iI(T)$ if and only if $k=0$, and hence $T^{\mathrm{c}}(e)$ is a locally small extension as stated.
\end{proof}

In the hierarchy of locally small extensions of a truss $T$ one can distinguish those that are particularly close to $T$.

\begin{definition}\label{def.small}
A locally small extension $(S,\eta_{S})$ of a truss $T$ is called a {\em small extension} provided $\reallywidehat{\eta_S}(\iI(T))$ is an essential ideal in $S$.
\end{definition}

Taking into account the explicit form of the induced ring map $\reallywidehat{\eta_S}$ described in Remark~\ref{rem.iso.locsmal} one immediately obtains the following characterisation of small extensions.
\begin{lemma}\label{lem.small}
Let $T$ be a truss, $e\in T$  and let  $(S,\eta_{S})$ be a  locally small extension of $T$. Then $(S,\eta_{S})$ is a small extension if and only if, for all ideals $J\lhd S$, there exists $a\in T$ such that $a\neq e$ and
$\eta_S(a) - \eta_S(e)\in J$.
\end{lemma}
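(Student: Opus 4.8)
The plan is to reduce the claimed equivalence to a single computation: to describe the ideal $\reallywidehat{\eta_S}(\iI(T))$ explicitly and then observe that its essentiality is word-for-word the stated condition. Both directions of the ``if and only if'' then fall out simultaneously.

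First I would pin down $K := \reallywidehat{\eta_S}(\iI(T))$ as a concrete subset of $S$. Since $(S,\eta_S)$ is a locally small extension, Proposition~\ref{prop.iso.locsmal} gives $S = \im(\reallywidehat{\eta_S})$, so $\reallywidehat{\eta_S}\colon T(e)\lra S$ is surjective and therefore carries the ideal $\iI(T)=\{(a,0)\mid a\in T\}$ of $T(e)$ to an ideal of $S$. Feeding $k=0$ into the explicit formula $\reallywidehat{\eta_S}(a,k)=\eta_S(a)+(k-1)\eta_S(e)$ recorded in Remark~\ref{rem.iso.locsmal} yields
$$
K=\reallywidehat{\eta_S}(\iI(T))=\{\,\eta_S(a)-\eta_S(e)\ \mid\ a\in T\,\}.
$$

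Next I would unwind the two sides and match them. By Definition~\ref{def.small}, $(S,\eta_S)$ is a small extension exactly when $K$ is an essential ideal, i.e.\ when $K\cap J\neq\{0\}$ for every nonzero ideal $J\lhd S$. The key remark is that every element of $K$ has the shape $\eta_S(a)-\eta_S(e)$, and since $\eta_S$ is injective (it is the structural embedding of the extension, Definition~\ref{def.locsmal}), such an element is nonzero precisely when $a\neq e$. Hence for a fixed $J$ the condition $K\cap J\neq\{0\}$ holds if and only if there is some $a\in T$ with $a\neq e$ and $\eta_S(a)-\eta_S(e)\in J$. Quantifying over all nonzero ideals $J$ then turns the essentiality of $K$ into exactly the stated criterion, establishing both implications at once.

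I do not expect a genuine obstacle here; the content is a careful transcription rather than a new idea. The one point meriting care is the reading of the quantifier: for the zero ideal $J=\{0\}$ no admissible $a$ can exist, again because $\eta_S$ is injective, so the quantifier in the statement is to be understood over the nonzero ideals, in accordance with the standard definition of an essential ideal.
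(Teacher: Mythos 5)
Your argument is correct and is exactly the one the paper has in mind: the paper omits a formal proof, stating only that the lemma follows ``immediately'' from the explicit formula $\reallywidehat{\eta_S}(a,k)=\eta_S(a)+(k-1)\eta_S(e)$ of Remark~\ref{rem.iso.locsmal}, which is precisely your computation of $\reallywidehat{\eta_S}(\iI(T))=\{\eta_S(a)-\eta_S(e)\mid a\in T\}$ followed by unwinding essentiality via the injectivity of $\eta_S$. Your closing observation that the quantifier must be read over nonzero ideals $J$ is a fair (and correct) clarification of a small imprecision in the statement itself.
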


\begin{example}\label{ex.int.small}
Let $T_r$ be the truss defined in Example~\ref{ex.integers}. Consider the locally small extension $\eta: T_r \lra \tT(r\ZZ)$, $n\mapsto n$ (with $e=r$).  Since $r\ZZ$ is a principal ideal domain, all ideals in $r\ZZ$ are of the form $I_q=qr\ZZ$, for a non-negative integer $q$. Then
$$
I_q\ni qr(r-1) = \eta(r(r-1)q +r) - \eta(r),
$$
and hence $r\ZZ$ is a small extension of $T_r$.
\end{example}

There exist locally small extensions which are not small extensions. As we observed in Remark~\ref{rem.iso.locsmal},  $T(e)$ is a locally small extension of a truss $T$ but usually it is not small. For example if $T=\tT(R)$, for some ring $R$, then $\tT(R)(0) = R\times \ZZ$ and clearly $(R,0)$ is not an essential ideal in $R\times \ZZ$. Similarly, if $(R,+)$ has a finite exponent $N$, then $\tT(R)^{\mathrm{c}}(0) = R\times \ZZ_N$ and e.g.\ $(0,\ZZ_N)$ intersects trivially with $(R,0)$.

Finally we look at extensions which are at the bottom (or top, depending on the point of view) of the hierarchy of locally small extensions.

\begin{definition}\label{def.min}
Let $T$ be a truss and $(S,\eta_S)$ be a locally small extension of $T$ into a ring $S$. Then we say that $(S,\eta_S)$ is a {\em minimal extension} if, for all ideals $I\subseteq T(e)$ such that $I\cap \iI(T)=\{(e,0)\}$,  $I\subseteq \ker(\reallywidehat{\eta_S})$.
\end{definition}
\begin{lemma}
A minimal extension of a truss is unique up to isomorphism.
\end{lemma}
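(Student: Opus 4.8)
The plan is to reduce the statement to the ideal lattice of the single universal ring $T(e)$, for a fixed $e \in T$, using that every locally small extension is a quotient of $T(e)$. Let $(S,\eta_S)$ and $(S',\eta_{S'})$ be minimal extensions of $T$. As both are in particular locally small, Proposition~\ref{prop.iso.locsmal} together with Corollary~\ref{cor.univ} provide surjective ring homomorphisms $\widehat{\eta_S}\colon T(e)\lra S$ and $\widehat{\eta_{S'}}\colon T(e)\lra S'$ characterised by $\tT(\widehat{\eta_S})\circ \iota_{T(e)}=\eta_S$ and $\tT(\widehat{\eta_{S'}})\circ \iota_{T(e)}=\eta_{S'}$. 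Writing $K:=\ker(\widehat{\eta_S})$ and $K':=\ker(\widehat{\eta_{S'}})$, the first isomorphism theorem gives $S\cong T(e)/K$ and $S'\cong T(e)/K'$, so the whole problem becomes one about the ideals $K$ and $K'$ of $T(e)$.

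The first step is to observe that $K$ and $K'$ both belong to the family of ideals over which the minimality condition quantifies, namely $K\cap \iI(T)=\{(e,0)\}$ and $K'\cap \iI(T)=\{(e,0)\}$. This is exactly the content of Lemma~\ref{lem:ideal}: after identifying $S$ with $T(e)/K$ through $\widehat{\eta_S}$, injectivity of the embedding $\eta_S$ (which is part of the definition of a locally small extension) translates precisely into $K\cap \iI(T)=\{(e,0)\}$, and likewise for $K'$.

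The crux of the argument is then to pit the two minimality hypotheses against each other. Since $K'$ is an admissible ideal by the previous step, the defining property of the minimal extension $(S,\eta_S)$ applied to $I=K'$ forces $K'\subseteq K$; by symmetry, minimality of $(S',\eta_{S'})$ applied to $I=K$ forces $K\subseteq K'$. Hence $K=K'$. Conceptually, minimality singles out the kernel as the unique largest ideal of $T(e)$ meeting $\iI(T)$ trivially, so it is an invariant of $T$ and $e$ alone. I expect the only genuine subtlety to be the bookkeeping of the first step: one must be sure that each kernel satisfies the trivial-intersection condition before minimality may legitimately be invoked on it.

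Finally, I would upgrade the equality $K=K'$ to an isomorphism of extensions. Define $\theta\colon S\lra S'$ by $\theta(\widehat{\eta_S}(x))=\widehat{\eta_{S'}}(x)$ for $x\in T(e)$; this is a well-defined ring isomorphism because $\widehat{\eta_S}$ and $\widehat{\eta_{S'}}$ are surjective with the common kernel $K=K'$, and by construction $\theta\circ \widehat{\eta_S}=\widehat{\eta_{S'}}$. Applying the functor $\tT$ and precomposing with $\iota_{T(e)}$ then yields $\tT(\theta)\circ\eta_S=\tT(\theta)\circ\tT(\widehat{\eta_S})\circ \iota_{T(e)}=\tT(\widehat{\eta_{S'}})\circ \iota_{T(e)}=\eta_{S'}$, so that $\theta$ intertwines the two embeddings. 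Therefore $(S,\eta_S)$ and $(S',\eta_{S'})$ are isomorphic as extensions, which is the asserted uniqueness up to isomorphism.
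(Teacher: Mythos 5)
Your proof is correct and follows essentially the same route as the paper: both arguments play the two minimality conditions off against each other to conclude $\ker(\widehat{\eta_S})=\ker(\widehat{\eta_{S'}})$ and then invoke the first isomorphism theorem. Your version is simply more explicit about the two points the paper leaves tacit — that each kernel meets $\iI(T)$ trivially (so minimality may be applied to it) and that the resulting isomorphism intertwines the embeddings.
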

\begin{proof}
 Let $(S,\eta_{S})$ and $(S',\eta_{S'})$ be two minimal extensions of a truss $T$. Then $\ker(\reallywidehat{\eta_S})\subseteq \ker(\reallywidehat{\eta_{S'}})$ and $\ker(\reallywidehat{\eta_{S'}})\subseteq\ker(\reallywidehat{\eta_S})$, so $\ker(\reallywidehat{\eta_S})=\ker(\reallywidehat{\eta_{S'}})$, and isomorphism is given by the first isomorphism theorem for rings.
 \end{proof}
 \begin{example}[Minimal extensions exist] Let $T=\tT(R)$ for a ring $R$, then $R$ is a minimal extension of $\tT(R)$ to a ring. Since $\tT(R)(0) = R\times \ZZ$,  and the ring homomorphism $\widehat{\eta}: R\times \ZZ\lra R$ induced from the identity map $\eta: R\lra R$ is the projection on the first factor. Furthermore, $\iI(T) = (R,0)$, and so  if an ideal $I\lhd R\times \ZZ$ intersects trivially with $(R,0)$ then $I\subseteq (0,\ZZ) = \ker \widehat{\eta}$.
\end{example}

Observe that if $(S,\eta_S)$ is a small extension, then it is a locally small extension. In a similar way every minimal extension is a small extension. This easily follows by the definitions of locally small, small and minimal extensions.

\begin{lemma}\label{lem:minimal}
Let $(S,\eta_S)$ be an extension of a truss $T$ with at least two elements such that $S$ is a domain. If $(S,\eta_S)$ is a small extension, then $(S,\eta_S)$ is a minimal extension.
\end{lemma}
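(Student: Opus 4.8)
The plan is to fix an element $e\in T$ and reduce the minimality condition to a one-line ideal-product computation carried out in the domain $S$. First I recall, via Proposition~\ref{prop.iso.locsmal} and Remark~\ref{rem.iso.locsmal}, that since $(S,\eta_S)$ is in particular a locally small extension, the induced ring map $\widehat{\eta_S}:T(e)\lra S$ is surjective and is given explicitly by $(a,k)\lto \eta_S(a)+(k-1)\eta_S(e)$; in particular $\widehat{\eta_S}$ sends the zero $(e,0)$ of $T(e)$ to $0\in S$. Writing $K:=\widehat{\eta_S}(\iI(T))$, this is an ideal of $S$ (the image of the ideal $\iI(T)$ under a surjective ring homomorphism), it is essential by Definition~\ref{def.small}, and it is nonzero: choosing $a\in T$ with $a\neq e$, which is possible precisely because $T$ has at least two elements, injectivity of $\eta_S$ gives $\widehat{\eta_S}(a,0)=\eta_S(a)-\eta_S(e)\neq 0$. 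This is the only point at which the hypothesis that $T$ has at least two elements is needed.

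Next I verify the condition of Definition~\ref{def.min}. Let $I\subseteq T(e)$ be any ideal with $I\cap\iI(T)=\{(e,0)\}$ and put $J:=\widehat{\eta_S}(I)$, an ideal of $S$. The key observation is that, since both $I$ and $\iI(T)$ are ideals of $T(e)$, the product ideal satisfies $I\cdot\iI(T)\subseteq I\cap\iI(T)=\{(e,0)\}$, so $I\cdot\iI(T)$ is the zero ideal of $T(e)$. Applying the ring homomorphism $\widehat{\eta_S}$ then shows that every generating product of $JK$ vanishes: for $i\in I$ and $x\in\iI(T)$ we have $\widehat{\eta_S}(i)\,\widehat{\eta_S}(x)=\widehat{\eta_S}(ix)=0$, since $ix=(e,0)$. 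Hence $JK=0$ in $S$.

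It remains to pass from $JK=0$ to $J=0$, and this is where the domain hypothesis does its work: choosing $k_0\in K$ with $k_0\neq 0$ (possible since $K\neq 0$), for every $j\in J$ we have $jk_0\in JK=0$, so $jk_0=0$, and because $S$ has no zero divisors this forces $j=0$. Thus $J=\widehat{\eta_S}(I)=\{0\}$, that is $I\subseteq\ker\widehat{\eta_S}$, which is exactly the requirement in Definition~\ref{def.min}. Therefore $(S,\eta_S)$ is a minimal extension.

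I do not expect a serious obstacle here; the proof is short once the explicit form of $\widehat{\eta_S}$ and the containment $I\cdot\iI(T)\subseteq I\cap\iI(T)$ are in hand. The only conceptual subtlety worth flagging is that essentiality of $K$ is not really exploited beyond guaranteeing $K\neq 0$: in a domain any two nonzero ideals meet nontrivially, so for a domain the smallness hypothesis is in effect equivalent to $K\neq 0$, which is already automatic once $T$ has at least two elements. The genuine content is thus the interplay between the trivial-intersection condition on $I$ and the absence of zero divisors in $S$.
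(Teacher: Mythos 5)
Your proof is correct and follows essentially the same route as the paper's: both arguments rest on the observation that products of elements of $I$ with elements of $\iI(T)$ land in $I\cap\iI(T)=\{(e,0)\}$, push this through $\widehat{\eta_S}$, and then use the domain property of $S$ together with the nonvanishing of $\widehat{\eta_S}(b,0)=\eta_S(b)-\eta_S(e)$ for some $b\neq e$ (available since $T$ has at least two elements and $\eta_S$ is injective). The only cosmetic difference is that you phrase this at the level of product ideals $JK=0$ while the paper works elementwise with $(b,0)(a,k)$; your closing remark that essentiality is not used beyond $K\neq 0$ is accurate and consistent with the paper's own proof.
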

\begin{proof}
Let $I$ be an ideal in $T(e)$ such that $I\cap \iI(T)=\{(e,0)\}$. If $(a,k)\in I$, then for all $b\in T$, 
$
(b,0)(a,k)\in I\cap \iI(T),
$
and hence $(b,0)(a,k)=(e,0)$. Since $\reallywidehat{\eta_S}$ is a ring homomorphism,
$$
\reallywidehat{\eta_S}(b,0)\reallywidehat{\eta_S}(a,k) =\reallywidehat{\eta_S}(e,0)=0,
$$
and the fact that $S$ is a domain implies that
$
\reallywidehat{\eta_S}(b,0)=0$ or $\reallywidehat{\eta_S}(a,k)=0,
$
for all $(a,k)\in I$ and $b\in T$. In particular, for $b\neq e$, 
$\reallywidehat{\eta_S}(b,0) = \eta_S(b) - \eta_S(e) \neq 0$ by the formula in Remark~\ref{rem.iso.locsmal} and since $\eta_S$ is injective. 
Therefore $\reallywidehat{\eta_S}(a,k)=0$, for all $(a,k)\in I$ and thus
$I\subset \mathrm{ker}(\reallywidehat{\eta_S})$ and $(S,\eta_S)$ is a minimal extension.
\end{proof}

\begin{example}
Let us consider trusses $T_r = r(r-1)\ZZ +r$, for $r=-1$ or $r\geq 2$ of Example~\ref{ex.integers}. By Example~\ref{ex.int.small}, $T_r\hookrightarrow \tT(r\ZZ )$ is a small extension and since
$r\ZZ$ is a domain,  $r\ZZ$ is a minimal extension of $T_r$ by Lemma \ref{lem:minimal}.
\end{example}

\begin{example}[Small but not a minimal extension]
Let $p$ be a prime number and consider the truss
$$
T = \begin{pmatrix} 1 & \ZZ_p \cr 0 & 1 \end{pmatrix}
$$
with the usual matrix multiplication and the heap structure arising from the matrix addition. For $e = \begin{pmatrix} 1 &0 \cr 0 & 1 \end{pmatrix}$, the integral extension $T(e)$  can be identified with
$$
T(e)=\left\{\begin{pmatrix} m&a\\0&m\end{pmatrix}\; | \; m\in\mathbb{Z},\, a\in\mathbb{Z}_p\right\},
$$ 
a Dorroh extension of the ring $\ZZ_p$ with zero multiplication. With this identification, $\iota_T: T\lra T(e)$ is the obvious (set-theoretic) inclusion map, and the corresponding ideal $\iI(T)$ of $T(e)$ comes out as
$$
\iI(T)= \begin{pmatrix} 0 & \ZZ_p \cr 0 & 0 \end{pmatrix}.
$$

For all $n\in \NN$, let us define injective truss homomorphisms
$$
\eta_n: T\lra \tT(\ZZ_{p^{n+1}}), \qquad \begin{pmatrix}1&a\\0&1\end{pmatrix} \lto (1-ap^n)\pmod{p^{n+1}}.
$$
The universally constructed ring homomorphisms are
$$
\widehat{\eta}_n: T(e)\lra \ZZ_{p^{n+1}}, \qquad \begin{pmatrix}m&a\\0&m\end{pmatrix} \lto (m-ap^n)\pmod{p^{n+1}}.
$$
Each of the maps $\widehat{\eta_n}$ is onto so the extensions $\eta_n: T\lra \ZZ_{p^{n+1}}$ are locally small. Furthermore, since for all $n$, the ideals
$$
\widehat{\eta}_n(\iI(T)) = \{ap^n\pmod{p^{n+1}}\;|\; a\in \ZZ_p\},
$$
are essential in $\ZZ_{p^{n+1}}$, all these extensions are small. By the uniqueness of the minimal extensions at most one of them could be minimal. Thus we obtain an infinite family of small extensions that are not minimal.
\end{example}

\part{Classifications}\label{part.class}

 \section{Trusses from rings with zero multiplication}\label{sec.zero}
 The results of Section~\ref{sec.ext-truss} allow one to associate a truss to a homothetic datum on a ring, and thus provide one with a way of constructing trusses. In this part we will classify or describe all  trusses induced by homothetic data on rings with particular properties. We start with the simplest possible rings, those with zero multiplication. 
 
 \begin{proposition}\label{prop.hom.data.zero}
 Let $R$ be a ring with  zero multiplication. Then:
 \begin{zlist}
 \item Any homothetic datum on $R$ consists of an element $s\in R$ and a double operator $\sigma$ such that 
 \begin{blist}
 \item for all $a\in R$, $\sigma(a\sigma) = (\sigma a)\sigma$,
 \item $\sigma$ is an idempotent, that is $\sigma^2 =\sigma$,
 \item $s\sigma = \sigma s$.
 \end{blist}
 \item Any truss induced by a homothetic datum is isomorphic to $\tT(\sigma,0)$, where $\sigma$ satisfies conditions (a) and (b) above.
 \item Two trusses $\tT(\sigma,0)$ and $\tT(\sigma',0)$ are isomorphic if an only if there exists an abelian group automorphism $\Phi: R\lra R$ such that $\sigma'=\Phi^*(\sigma)$.
 \end{zlist}
 \end{proposition}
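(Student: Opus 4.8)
The plan is to dispatch the three assertions in order, with essentially all of the work concentrated in (2). Throughout I abbreviate $p=\sigor$ and $q=\sigol$ for the two additive endomorphisms that make up the double operator $\sigma$, and I repeatedly use the single genuine simplification afforded by zero multiplication: every product in $R$ vanishes, so for every $a\in R$ the inner homothetism $\bar a$ of \eqref{inner.hom} is the zero element of $\Omega(R)$.

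For (1) I would simply unwind Definition~\ref{def.homothetism}. Because each side of \eqref{h.linear} and of \eqref{h.assoc} is a product in $R$, hence $0$, these identities hold automatically, so in the zero-multiplication case \emph{every} double operator on $R$ is a bimultiplication. Consequently the only surviving requirement for $\sigma\in\Pi(R)$ is the self-permutability \eqref{h.comm}, which reads $pq=qp$ and is exactly condition (a). For a homothetic datum one adds \eqref{h.norm} and \eqref{h.sig}: the former is condition (c) verbatim, while in the latter $\bar s=0$ reduces $\sigma^2=\sigma+\bar s$ to $\sigma^2=\sigma$, i.e.\ $p^2=p$ and $q^2=q$, which is condition (b).

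For (2) the strategy is to use Lemma~\ref{lem.iso.tr} to absorb $s$ while leaving $\sigma$ untouched. Taking $\Phi=\id$ and using $\bar v=0$, $v^2=0$, $\sigma v=p(v)$ and $v\sigma=q(v)$, the formulas \eqref{trans} give $\tT(\sigma,s)\cong\tT(\sigma,s')$ with $s'=s+v-\sigma v-v\sigma=s+(\id-p-q)(v)$, so it suffices to solve $(\id-p-q)(v)=-s$. Showing that $s$ lies in $\im(\id-p-q)$ is the crux of the whole proposition and the step where condition (c) is indispensable; this is the main obstacle. Since by (1) the endomorphisms $p,q$ are commuting idempotents, the four operators $pq$, $p(\id-q)$, $(\id-p)q$, $(\id-p)(\id-q)$ are orthogonal idempotents summing to $\id$ and split $R=R_{11}\oplus R_{10}\oplus R_{01}\oplus R_{00}$, on whose summands $\id-p-q$ acts as $-\id$, $0$, $0$, $\id$ respectively; hence $\im(\id-p-q)=R_{11}\oplus R_{00}$. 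Decomposing $s=s_{11}+s_{10}+s_{01}+s_{00}$, condition (c) in the form $ps=qs$ reads $s_{11}+s_{10}=s_{11}+s_{01}$, which in a direct sum forces $s_{10}=s_{01}=0$; thus $s\in R_{11}\oplus R_{00}=\im(\id-p-q)$. Solving for $v$ and feeding it back into Lemma~\ref{lem.iso.tr} yields $\tT(\sigma,s)\cong\tT(\sigma,0)$, and $\sigma$ satisfies (a) and (b) by part (1).

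Finally, (3) should follow quickly from Lemma~\ref{lem.iso.tr} and Lemma~\ref{lem.h.aut.rev} once one observes that over a zero-multiplication ring every additive automorphism $\Phi$ is automatically a ring automorphism, since it trivially preserves the identically-zero product; thus the two classes of maps coincide. For the backward implication, given $\sigma'=\Phi^*(\sigma)$ I apply Lemma~\ref{lem.iso.tr} with this $\Phi$ and $v=0$: as $\bar 0=0$ the formulas \eqref{trans} give $\sigma'=\Phi^*(\sigma-\bar 0)=\Phi^*(\sigma)$ and $s'=\Phi(0)=0$, so $\tT(\sigma,0)\cong\tT(\sigma',0)$. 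For the forward implication, an isomorphism $\tT(\sigma,0)\cong\tT(\sigma',0)$ yields, via Lemma~\ref{lem.h.aut.rev}, a ring automorphism $\Phi$ and an element $v\in R$ with $\sigma'=\Phi^*(\sigma-\bar v)$; since $\bar v=0$ this is precisely $\sigma'=\Phi^*(\sigma)$ with $\Phi$ an abelian group automorphism. I anticipate no real obstacle here beyond tracking the definition \eqref{adj} of $\Phi^*$ correctly.
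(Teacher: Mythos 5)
Your proposal is correct, and parts (1) and (3) run essentially parallel to the paper's proof (the paper likewise disposes of (1) by noting that \eqref{bimult} and $\bar s=0$ are automatic, and settles (3) by citing Lemmas~\ref{lem.iso.tr} and~\ref{lem.h.aut.rev}; your extra observation that every additive automorphism of a zero-multiplication ring is a ring automorphism is a small but genuine clarification that the paper leaves implicit). The interesting divergence is in (2). The paper simply exhibits the element $v=2\sigma s-s=2s\sigma-s$ and verifies by a one-line computation, using (1b), (1c) and zero multiplication, that the formula \eqref{trans} then yields $s'=0$. You instead \emph{derive} the existence of such a $v$: writing $p=\sigor$, $q=\sigol$ as commuting idempotents, you split $R$ by the orthogonal idempotents $pq$, $p(\id-q)$, $(\id-p)q$, $(\id-p)(\id-q)$, observe that $\id-p-q$ is invertible on $R_{11}\oplus R_{00}$ and zero on the other two summands, and use condition (c) in the form $p(s)=q(s)$ to force $s\in R_{11}\oplus R_{00}=\im(\id-p-q)$. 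This is exactly the decomposition the paper only introduces afterwards (Lemmas~\ref{lem.idem}--\ref{lem.sub} and Remark~\ref{rem.idem}) for the classification in Theorem~\ref{thm.zero}, so your argument ties the proposition to that later structure and explains \emph{why} a suitable $v$ exists and where (c) is indispensable; indeed solving $(\id-p-q)(v)=-s$ on the two surviving summands gives $v=s_{11}-s_{00}=2p(s)-s$, which is precisely the paper's unmotivated choice. The paper's route is shorter; yours is more transparent and self-explanatory. Both are sound.
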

 \begin{proof}
 Since $R$ has zero multiplication, all the bimultiplication conditions \eqref{bimult} are automatically satisfied, so only \eqref{h.comm} remains, and this is precisely condition (1)(a) in the statement of the proposition. For the same reason, $\bar{s}$ is the zero operation, so the homothetic datum conditions reduce to (1)(b) and (1)(c) above.  Any $s\in R$ can be reduced to zero by choosing $\Phi = \id$ and $v= 2\sigo s -s = 2s\sigo -s$ in \eqref{trans}. Indeed, in this case,
 $$
 \begin{aligned}
 s' &= s+v+v^2 -v\sigo -\sigo v = s +  2\sigma s -s -2\sigo s \sigo +s\sigo -2\sigo^2 s +\sigo s =0,
 \end{aligned}
 $$
by properties (1b) and (1c) and since $R$ is a ring with zero multiplication. The assertion (3) follows immediately from Lemma~\ref{lem.iso.tr} and  Lemma~\ref{lem.h.aut.rev}.
 \end{proof}
 
 Our aim in this section is to reveal the contents of Proposition~\ref{prop.hom.data.zero} in a way that could lead to the full classification of trusses built on rings with zero multiplication. Recall first that a double operator $\sigo: R\lra R$ can be identified with an ordered pair of additive endomaps $\sigor$ and $\sigol$ on $R$. Put together, conditions (1a) and (1b) in Proposition~\ref{prop.hom.data.zero} mean that $\sigor$ and $\sigol$ are commuting idempotents in the endomorphism ring $\lend{}{R,+}$. The following lemmas are probably well known. We include them for completeness.

 \begin{lemma}\label{lem.idem}
Let $S$ be a ring and $e,f\in S$ be idempotent elements. The following statements are equivalent:
 \begin{rlist}
 \item $ef= fe$,
 \item there exists exactly one triple of orthogonal idempotents $(e_1,e_2,e_3)$ in $S$ such that
\begin{equation}\label{e.f}
 e= e_1+ e_3\quad \&\quad f=e_2+e_3.
\end{equation}
 \end{rlist}
 \end{lemma}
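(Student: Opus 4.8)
The plan is to prove the two implications separately, with the forward direction being constructive and the reverse direction being a short expansion; the uniqueness clause in (ii) will then fall out of the very computation that establishes the reverse implication, so I would organize the argument to reuse it.

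For the direction (ii)~$\Rightarrow$~(i), I would simply expand $ef$ and $fe$ using $e = e_1+e_3$, $f = e_2+e_3$ together with the orthogonality relations $e_ie_j = \delta_{ij}e_i$. Every mixed term vanishes, so both products collapse to $e_3$: explicitly $ef = (e_1+e_3)(e_2+e_3) = e_3^2 = e_3$ and $fe = (e_2+e_3)(e_1+e_3) = e_3^2 = e_3$, whence $ef = fe$. I want to record here the stronger fact that, for \emph{any} orthogonal triple realizing $e$ and $f$ as in (ii), one is forced to have $e_3 = ef$; this is what will give uniqueness below.

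For the harder direction (i)~$\Rightarrow$~(ii), assuming $ef = fe$, I would exhibit the triple explicitly by setting $e_3 := ef$, $e_1 := e - ef$ and $e_2 := f - ef$. First I would check idempotency, e.g.\ $e_3^2 = efef = e(fe)f = e(ef)f = ef = e_3$, the analogous checks for $e_1$ and $e_2$ repeatedly reducing the recurring words $efe = e(fe) = e(ef) = ef$ and $fef = ef$ by commutativity. Next I would verify the six orthogonality relations $e_ie_j = 0$ for $i\neq j$, each of which reduces to a cancellation of the shape $ef - ef = 0$ once $ef = fe$ is applied. Finally $e_1 + e_3 = e$ and $e_2 + e_3 = f$ hold by construction. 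For uniqueness I would invoke the observation from the reverse direction: any orthogonal triple $(e_1',e_2',e_3')$ with $e = e_1'+e_3'$, $f = e_2'+e_3'$ forces $e_3' = ef$, and hence $e_1' = e - ef$ and $e_2' = f - ef$, so it coincides with the triple just built, giving ``exactly one.''

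I do not expect a genuine obstacle here, since the whole argument is elementary algebra in $S$; the only thing to watch is the disciplined use of $ef = fe$ at every step, in particular collapsing each of $efe$, $fef$ and $efef$ to $ef$, so that idempotency and orthogonality all go through cleanly.
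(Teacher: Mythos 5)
Your proof is correct and follows essentially the same route as the paper: construct the triple $e_1 = e-ef$, $e_2 = f-ef$ (the paper writes $f-fe$, which is the same under (i)), $e_3 = ef$, derive uniqueness from the forced identity $e_3 = ef$, and obtain the converse by expanding the products using orthogonality. No gaps.
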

 \begin{proof}
 If the idempotents $e$ and $f$ commute, then setting 
 $$
 e_1 = e-ef, \quad e_2 = f- fe, \quad e_3 =ef,
 $$
 we obtain a triple of orthogonal indempotent that satisfies \eqref{e.f}. Suppose $(f_1,f_2,f_3)$ is another such triple. Then, since $f_3=ef=fe$, 
 $$
 f_1= e-ef= e_1 \quad \& \quad f_2 = f -fe = e_2,
 $$
 which proves the uniqueness.
 
 In the converse direction  the orthogonality and idempotent property of the $e_i$ imply that $e$ and $f$ are idempotents and that $ef=fe$, as required.
 \end{proof}
 
 \begin{lemma}\label{lem.sub}
 For any abelian group $A$, there is a bijective correspondence between the following sets of data:
  \begin{rlist}
 \item ordered pairs $(\sigor, \sigol)$ of commuting idempotents in the  ring $\lend{}{A,+}$;
 \item ordered triples $(\eps_1,\eps_2,\eps_3)$ of orthogonal idempotents in  $\lend{}{A,+}$;
 \item ordered quadruples $(A_1,A_2,A_3,A_4)$ of subgroups of $A$ such that 
 $$
 A= A_1\oplus A_2\oplus A_3\oplus A_4.
$$ 
  \end{rlist} 
 \end{lemma}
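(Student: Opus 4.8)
The plan is to establish the two bijections (i)$\leftrightarrow$(ii) and (ii)$\leftrightarrow$(iii) separately and then compose them. The first is essentially a restatement of Lemma~\ref{lem.idem}, applied to the endomorphism ring; the second is the classical correspondence between complete systems of orthogonal idempotents and direct sum decompositions, with one conceptual twist worth flagging: a triple of orthogonal idempotents need \emph{not} sum to the identity, so it determines a fourth, complementary idempotent, and this is exactly what accounts for the fourth direct summand in (iii).

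For (i)$\leftrightarrow$(ii) I would apply Lemma~\ref{lem.idem} verbatim to the ring $S=\lend{}{A,+}$. Given a pair $(\sigor,\sigol)$ of commuting idempotents, the lemma produces the unique orthogonal triple
$$
\eps_1=\sigor-\sigor\sigol,\qquad \eps_2=\sigol-\sigor\sigol,\qquad \eps_3=\sigor\sigol,
$$
using $\sigor\sigol=\sigol\sigor$; conversely an orthogonal triple $(\eps_1,\eps_2,\eps_3)$ yields the commuting idempotent pair $(\eps_1+\eps_3,\eps_2+\eps_3)$. The uniqueness clause of Lemma~\ref{lem.idem} is precisely what guarantees these two assignments are mutually inverse, so (i)$\leftrightarrow$(ii) is a bijection.

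For (ii)$\leftrightarrow$(iii) I would first pass from a triple to a \emph{complete} orthogonal system of four idempotents by setting $\eps_4:=\id-\eps_1-\eps_2-\eps_3$. A short computation shows that $\eps:=\eps_1+\eps_2+\eps_3$ is idempotent (the cross terms vanish by orthogonality), whence $\eps_4=\id-\eps$ is idempotent and orthogonal to each $\eps_i$, and $\eps_1+\eps_2+\eps_3+\eps_4=\id$. As $\eps_4$ is determined by the other three, triples correspond bijectively to quadruples $(\eps_1,\eps_2,\eps_3,\eps_4)$ of orthogonal idempotents with $\eps_1+\eps_2+\eps_3+\eps_4=\id$. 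I would then invoke the standard bijection between such complete systems and decompositions $A=A_1\oplus A_2\oplus A_3\oplus A_4$: send $(\eps_i)$ to the subgroups $A_i:=\im\eps_i$, and send a decomposition to the tuple of projections onto each summand along the others.

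The only step requiring genuine verification is this last correspondence, and it is routine. In one direction, $a=\eps_1(a)+\eps_2(a)+\eps_3(a)+\eps_4(a)$ with $\eps_i(a)\in A_i$ gives $A=A_1+A_2+A_3+A_4$, and the sum is direct because applying $\eps_j$ to a relation $a_1+a_2+a_3+a_4=0$ with $a_i\in A_i$ (so that $\eps_i(a_i)=a_i$ and $\eps_j(a_i)=0$ for $i\neq j$) yields $a_j=0$. In the other direction the projections attached to a decomposition are manifestly orthogonal idempotents summing to $\id$, and the two constructions are mutually inverse since each $\eps_i$ restricts to the identity on $\im\eps_i$ and annihilates $\im\eps_j$ for $j\neq i$. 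Composing the bijections (i)$\leftrightarrow$(ii)$\leftrightarrow$(iii) then completes the proof.
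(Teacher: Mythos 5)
Your proposal is correct and follows essentially the same route as the paper: (i)$\leftrightarrow$(ii) is delegated to Lemma~\ref{lem.idem}, and (ii)$\leftrightarrow$(iii) is obtained by adjoining the complementary idempotent $\eps_4=\id-\eps_1-\eps_2-\eps_3$ and passing between a complete orthogonal system and the images/projections of a direct sum decomposition. You simply spell out the routine verifications that the paper leaves implicit.
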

 \begin{proof}
 The equivalence of statements (i) and (ii) is proven in Lemma~\ref{lem.idem}. Given system of orthogonal idempotents in (ii) set $\eps_4 = \id- \eps_1 - \eps_2 -\eps_3$, and then define $A_i =\im \,\eps_i$, $i=1,\ldots, 4$. Conversely, given ordered direct sum decomposition of $A$ as in (iii), set the $\eps_i$, $i=1,2,3$ to be the corresponding projections on the $A_i$. 
  \end{proof}
 
 \begin{remark}\label{rem.idem}
 There is a freedom in setting up ordering of tuples in Lemma~\ref{lem.sub}. In the following examples we choose the convention in which
\begin{blist}
  \item $\sigor$ is a projection on $A_1\oplus A_3$,
 \item $\sigol$ is a projection on $A_2\oplus A_3$.
 \end{blist}
 As a consequence of this choice:
 \begin{blist}
 \item[(c)] $\sigor \circ \sigol$ is a projection on $A_3$,
 \item[(d)] $\sigor - \sigor\circ \sigol$ is a projection on $A_1$,
 \item[(e)] $\sigol - \sigol\circ \sigor$ is a projection on $A_2$,
 \item[(f)] $A_4 = \ker \sigor \cap \ker\sigol$.
 \end{blist}
The identification in (f) follows from the fact that both $\sigor$ and $\sigol$ are sums of orthogonal projections.
 \end{remark}
 
 With all these results at hand we can now describe all trusses corresponding to rings with zero multplication.
 
 \begin{theorem}\label{thm.zero} 
 Let $A$ be an abelian group.
 \begin{zlist}
 \item For any ordered quadruples  $\A := (A_1,A_2,A_3,A_4)$ of abelian subgroups of  $A$ such that $A= A_1\oplus A_2 \oplus A_3 \oplus A_4$, and any elements $s\in A_3 \oplus A_4$, the 
 multiplication 
\begin{equation}\label{prod.idem}
 (a_1+a_2+a_3+a_4)(b_1+b_2+b_3+b_4) = b_1 +a_2 + a_3 +b_3 +s,
\end{equation}
 for all $a_i, b_i\in A_i$, defines a truss on $\hH(A)$.  We denote this truss by $\tT(\A, s)$.
 \item Any truss $\tT(\A, s)$ is isomorphic to $\tT(\A) := \tT(\A, 0)$, and $\tT(\A)\cong \tT(\B)$ if and only if $A_i\cong B_i$, $i=1,\ldots, 4$.  
  \item The product in the homothetic  extension $\tT(\A)(0)$ of the truss $\tT(\A)$ is given by the following formula,
 \begin{equation}\label{prod.idem.exten}
 \left(a_1+a_2+a_3+a_4,k\right)\left(b_1+b_2+b_3+b_4,l\right) = \left(k(b_1 +b_3) +l(a_2 + a_3),kl\right),
\end{equation}
 for all $a_i, b_i\in A_i$ and $k,l\in \ZZ$. 
 \item Any truss arising from or leading to a ring with zero multiplication (in the sense of the correspondence \eqref{corresp})  is of the form  $\tT(\A, s)$.
 \end{zlist}
 \end{theorem}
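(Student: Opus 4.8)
The plan is to read the entire statement off the homothetic-datum machinery of Section~\ref{sec.ext-truss}, using the bijection of Lemma~\ref{lem.sub} between ordered quadruples of subgroups and commuting idempotents $\sigma=(\sigor,\sigol)$ of $\lend{}{A,+}$. Throughout, $R$ denotes the zero-multiplication ring on $A$, so that $ab=0$ and $\bar a=0$ for all $a,b\in A$; in particular every additive bijection of $A$ is a ring automorphism, so the two notions of automorphism coincide. I will use freely that on such $R$ the homothetic multiplication of Definition~\ref{def.h.ext} reduces to $a\di b=a\sigma+\sigma b+s$. For (1), given $\A$ and $s\in A_3\oplus A_4$ I take $\sigor,\sigol$ to be the projections onto $A_1\oplus A_3$ and $A_2\oplus A_3$ as fixed in Remark~\ref{rem.idem}. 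Conditions (1a)--(1b) of Proposition~\ref{prop.hom.data.zero} hold because each of $\sigor,\sigol$ is idempotent and both composites $\sigor\sigol$ and $\sigol\sigor$ equal the projection onto $A_3$, so they commute; condition (1c), $s\sigma=\sigma s$, reads $\sigol(s)=\sigor(s)$ and holds precisely when the $A_1$- and $A_2$-components of $s$ vanish, i.e.\ $s\in A_3\oplus A_4$. Thus $(\sigma,s)$ is a homothetic datum, Theorem~\ref{thm.h.ext} makes $\tT(\sigma,s)$ a truss, and substituting $ab=0$, $a\sigma=a_2+a_3$, $\sigma b=b_1+b_3$ into $a\di b=a\sigma+\sigma b+s$ recovers exactly \eqref{prod.idem}. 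So $\tT(\A,s):=\tT(\sigma,s)$ is the asserted truss.

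For (2), the isomorphism $\tT(\A,s)\cong\tT(\A)$ is Proposition~\ref{prop.hom.data.zero}(2): $s$ is reduced to $0$ by $\Phi=\id$, $v=2\sigma s-s$ in \eqref{trans}. For the classification I translate the criterion $\sigma'=\Phi^*(\sigma)$ of Proposition~\ref{prop.hom.data.zero}(3) through the dictionary of Lemma~\ref{lem.sub}. Since $\Phi^*$ conjugates each of $\sigor,\sigol$ by $\Phi$ (Lemma~\ref{lem.h.aut}), it carries the images $A_1\oplus A_3$, $A_2\oplus A_3$, the composite-image $A_3$ and the common kernel $A_4$ to their $\Phi$-images, so by the uniqueness clause of Lemma~\ref{lem.sub} the quadruple of $\sigma'$ is $(\Phi A_1,\Phi A_2,\Phi A_3,\Phi A_4)$; hence $B_i=\Phi(A_i)\cong A_i$. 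Conversely, given isomorphisms $\phi_i:A_i\lra B_i$ the map $\Phi=\bigoplus_i\phi_i$ is a group isomorphism respecting the decompositions, so it intertwines the projection-multiplications \eqref{prod.idem} and is a truss isomorphism $\tT(\A)\cong\tT(\B)$. When $\A,\B$ decompose different groups I first use that a truss isomorphism is in particular a heap isomorphism, forcing $A\cong A'$, and transport $\B$ across such an isomorphism to land in the single-group situation where Proposition~\ref{prop.hom.data.zero}(3) applies.

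For (3) I identify $\tT(\A)(0)$ with $R(\sigma,0)$. A one-line retract computation at $e=0$ gives $a\pr0 b=0$ and $0\di 0=0$, so $\rR(\tT(\A);0)$ is the zero-multiplication ring $R$, and the induced datum of Theorem~\ref{thm.hom} is $(\eps,0^2)=(\sigma,0)$; hence $\tT(\A)(0)=R(\sigma,0)$. Feeding $ab=0$ and $s=0$ into the product rule \eqref{prod.Z} yields $(a,k)(b,l)=(l(a_2+a_3)+k(b_1+b_3),kl)$, which is \eqref{prod.idem.exten}. Statement (4) is then assembled from the two directions already established: a truss arising from a zero-multiplication ring is $\tT(\sigma,s)$ with $(\sigma,s)$ as in Proposition~\ref{prop.hom.data.zero}(1), hence of the form $\tT(\A,s)$ by the identification in (1); and a truss $T$ leading to a zero-multiplication ring $\rR(T;e)$ equals $\tT(\eps,e^2)$ by Theorem~\ref{thm.hom}, where $(\eps,e^2)$ is a homothetic datum on a zero-multiplication ring and is therefore again of the form $\tT(\A,e^2)$.

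The only genuinely non-mechanical step is the forward direction of the classification in (2): extracting the isomorphism invariants $A_i\cong B_i$ from an abstract truss isomorphism. This rests on the conjugation-of-idempotents computation together with the uniqueness of the quadruple in Lemma~\ref{lem.sub}, and on the bookkeeping---via the proof of Lemma~\ref{lem.h.aut.rev}---needed to pass from a bare truss isomorphism to the relation $\sigma'=\Phi^*(\sigma)$, including the reduction to a common underlying group when $\A$ and $\B$ live on different $A$.
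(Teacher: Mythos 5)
Your proposal is correct and follows essentially the same route as the paper: part (1) via Proposition~\ref{prop.hom.data.zero}, Lemma~\ref{lem.sub} and Remark~\ref{rem.idem}, and parts (2)--(4) by translating the isomorphism criterion of Proposition~\ref{prop.hom.data.zero}(2)--(3) through that dictionary and specialising the extension product formula. The paper leaves these latter steps as ``immediate''; you have merely supplied the (correct) details, including the conjugation-of-idempotents argument for the forward direction of the classification.
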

 \begin{proof}
 If $A$ is equipped with zero multiplication, by Proposition~\ref{prop.hom.data.zero} and  Lemma~\ref{lem.sub}  in conjunction with Remark~\ref{rem.idem}, the pair $(\A, s)$ gives a homothetic datum on this ring. The truss induced by this datum has multiplication \eqref{prod.idem} since
 $$
 \begin{aligned}
 \sigo(b_1+b_2+b_3+b_4) &= \sigor(b_1+b_2+b_3+b_4) = b_1+b_3,\\
 (a_1+a_2+a_3+a_4) \sigo &= \sigol(a_1+a_2+a_3+a_4)= a_2+a_3.
 \end{aligned}
 $$
 Hence $\tT(\A, s)$ is a truss, as claimed. The remaining statements follow immediately from Proposition~\ref{prop.hom.data.zero}, the product formula \eqref{te.prod}, and the correspondence \eqref{corresp}. 
  \end{proof}
 
 \begin{example}\label{ex.indecomp}
 Let $A$ be a nontrivial indecomposable abelian group. Since it cannot be written as a direct sum of two non-trivial subgroups, there are four possible ordered quadruples $\bf{A}$ that necessarily contain one copy of $A$ and three copies of the trivial group $0$. Multiplications in the corresponding trusses  and their homothetic extensions are collected in the following table:
 \\~
 
 \begin{center}
  \begin{tabular}{| c | c |  c | }
\hline
$\bf{A}$  &  Truss $\tT(\mathbf{A})$  & Extension   $\tT({\mathbf{A}})(0)$ \\ 
& $ \forall a,b\in T$ & $ \forall a,b\in T, k,l\in \ZZ$\\
\hline 
$(A,0,0,0)$ &   $ab = b$  &  
  $(a,k)(b,l) = (kb,kl)$ \\\hline
$(0,A,0,0)$ & $ab=a$ & 
$(a,k)(b,l) = (la,kl)$; \\ \hline
 $(0,0, A,0)$ & $ab=a+b$ &  
  $(a,k)(b,l) = (la+kb,kl)$\\ \hline
 $(0,0,0,A)$ & $ab=0$  & 
  $(a,k)(b,l) = (0,kl)$\\ \hline 
 \end{tabular}
 \end{center}
 ~\\
  \end{example}

 \begin{example}\label{ex.vec}
 Let $p$ be a prime number and $n$ be any natural number and set $A$ to be the abelian group 
 $$
 A= \ZZ_p^n= \underbrace{\ZZ_p\oplus \ZZ_p\oplus \ldots \oplus \ZZ_p}_{\mbox{$n$-times}}.
 $$
  Since $\ZZ_p$ is a simple cyclic group all subgroups of $A$ are  isomorphic to $\ZZ_p^k$, $0\leq k \leq n$. Therefore, up to isomorphism, there are as many ordered partitions of $A$ into the direct sum of four subgroups as there are elements  $\mathbf{n} = (n_1,n_2,n_3,n_4) \in \NN^4$ such that $n_1+n_2+n_3+n_4=n$. The corresponding groups are $A_i \cong \ZZ_p^{n_i}$ and they are uniquely determined by  $\mathbf{n}$. Hence, by Theorem~\ref{thm.zero} there are exactly ${n+3}\choose{3}$ non-isomorphic trusses  $\tT(\A)$. In view of the formula \eqref{prod.idem} the product in $\tT(\A)$ comes out as, for all $\mathbf{a}= (a_i)_{i=1}^n,\mathbf{b}= (b_i)_{i=1}^n \in \ZZ_p^n$, 
$$
    \begin{aligned}
\mathbf{a}\mathbf{b} &= \left(a_1+ b_1,\ldots, a_{n_1}+ b_{n_1}, b_{n_1+1}, \ldots  b_{n_1+n_2}, a_{n_1+n_2+1}, \ldots, a_{n_1+n_2+n_3}, 0,\ldots ,0\right)\\
&=\sum_{i=1}^{n_1}a_i \mathbf{e}_i + \sum_{i=1}^{n_1+n_2}b_i\mathbf{e}_i + \sum_{i=n_1+n_2+1}^{n_1+n_2+n_3}a_i\mathbf{e}_i,
\end{aligned}
$$
where the $\mathbf{e}_i$ are members of the standard basis for the $\ZZ_p$-vector space $\ZZ_p^n$.
Therefore, the product in the infinite homothetic extension $\tT(\A)(0)$ is
$$
(\mathbf{a},k)(\mathbf{b},l) = \left(\sum_{i=1}^{n_1}l a_i\mathbf{e}_i + \sum_{i=1}^{n_1+n_2}k b_i\mathbf{e}_i + \sum_{i=n_1+n_2+1}^{n_1+n_2+n_3}l a_i\mathbf{e}_i, kl\right).
$$ 
The ring $\tT(\A)(0)$  can be identified with a particular subring of the ring $M_2(\ZZ_p^n)$  of $2\times 2$ matrices with entries from  the product ring $\ZZ_p^n$ as follows.  Set
$$
\begin{aligned}
\mathbf{u} &= (\underbrace{1,\ldots,1}_{\mbox{$n_1+n_2$}}, \underbrace{0,\ldots,0}_{\mbox{$n_3+n_4$}}), \qquad 
\mathbf{v} = (\underbrace{1,\ldots,1}_{\mbox{$n_1$}},\underbrace{0,\ldots,0}_{\mbox{$n_2$}},\underbrace{1,\ldots,1}_{\mbox{$n_3$}}, \underbrace{0,\ldots,0}_{\mbox{$n_4$}}). 
\end{aligned}
$$
The collection of all upper-triangular matrices 
$$
U:= \left\{
\begin{pmatrix}
 k\mathbf{u}  & \mathbf{a} \cr 0 & k\mathbf{v}
 \end{pmatrix} 
 \mid \mathbf{a}\in \ZZ_p^n, k\in \ZZ\right\}
 $$
 is a subring in $M_2(\ZZ_p^n)$, since both $\mathbf{u}$ and $\mathbf{v}$ are idempotents. The function
 $$
 \tT(\A)(0) \lra U, \qquad (\mathbf{a},k) \lto \begin{pmatrix}
 k\mathbf{u}  & \mathbf{a} \cr 0 & k\mathbf{v}
 \end{pmatrix} ,
 $$
 is the required isomorphism of rings. 
 
 Since the abelian group $\ZZ_p^n$ has the exponent $p$, the cyclic version of the homothetic extension exists. Its description is obtained by replacing $\ZZ$ by $\ZZ_{p}$ in the above formulae.
 \end{example}

 \section{Trusses from rings with trivial annihilators}\label{sec.ann}
 For a ring $R$ we denote by
 $$
\mathfrak{a}(R) = \{a\in R \; |\; \forall r\in R,\; ra = a r =0\},
 $$
 the annihilator ideal of $R$. In this section we describe all trusses that can be associated to a ring with the trivial annihilator. We start by gathering some properties of homothetisms in this case (the first two are well known, see e.g.\ \cite[Section~2]{And:max}).

\begin{lemma}\label{lem.ann.bimul}
Let $R$ be a ring such that $\mathfrak{a}(R)=0$.  Then
\begin{zlist}
\item The map
$$
\beta: R\lra \Omega(R), \qquad a\lto \bar{a},
$$
is injective (a monomorphism of rings).
\item $\Pi(R)=\Omega(R)$.
\item For all $\sigma \in \Omega(R)$ and $s\in R$ such that  $\sigma^2 = \sigma+\bar{s}$,
$$
s\sigma = \sigma s.
$$
\end{zlist}
\end{lemma}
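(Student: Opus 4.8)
The three assertions are essentially independent, and the common engine is that in a ring with $\mathfrak{a}(R)=0$ an element is zero as soon as it annihilates $R$ on both sides. I would record this principle once and invoke it in (2) and (3). For (1) no such argument is needed: injectivity is immediate, since $\bar{a}=0$ means $ab=\bar{a}b=0$ and $ba=b\bar{a}=0$ for all $b$, i.e.\ $a\in\mathfrak{a}(R)=0$. Additivity of $\beta$ is clear from \eqref{inner.hom}, and multiplicativity $\overline{ab}=\bar{a}\bar{b}$ is a one-line verification against the composition rule \eqref{x.omega}: both sides act as $c\mapsto(ab)c$ on the left and $c\mapsto c(ab)$ on the right. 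Hence $\beta$ is an injective ring homomorphism.

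For (2), since $\Pi(R)\subseteq\Omega(R)$ holds by definition, I only need the reverse inclusion, namely that every $\sigma\in\Omega(R)$ automatically satisfies the self-permutability condition \eqref{h.comm}. The plan is to show that the element $(\sigma a)\sigma-\sigma(a\sigma)$ of $R$ lies in $\mathfrak{a}(R)$. Multiplying on the left by an arbitrary $b$ and transporting $\sigma$ across using \eqref{h.assoc} and \eqref{h.linear}, I expect both $b\bigl((\sigma a)\sigma\bigr)$ and $b\bigl(\sigma(a\sigma)\bigr)$ to collapse to the common value $((b\sigma)a)\sigma$; by the mirror computation, right multiplication by $b$ should reduce both $\bigl((\sigma a)\sigma\bigr)b$ and $\bigl(\sigma(a\sigma)\bigr)b$ to $\sigma\bigl((a\sigma)b\bigr)$. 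Since $\mathfrak{a}(R)=0$, the difference vanishes and $\sigma\in\Pi(R)$.

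For (3), the key preliminary step is to unpack \eqref{h.sig}. Evaluating $\sigma^2=\sigma+\bar{s}$ on an arbitrary $a$ from the left (using \eqref{x.omega} and \eqref{inner.hom}) yields $\sigma(\sigma a)=\sigma a+sa$, and from the right yields $(a\sigma)\sigma=a\sigma+as$; equivalently $sa=\sigma(\sigma a)-\sigma a$ and $as=(a\sigma)\sigma-a\sigma$, which expresses multiplication by $s$ purely in terms of $\sigma$. I would then show $\sigma s-s\sigma\in\mathfrak{a}(R)$. For left annihilation, \eqref{h.assoc} and \eqref{h.linear} rewrite $b(\sigma s)$ as $(b\sigma)s$ and $b(s\sigma)$ as $(bs)\sigma$; substituting the two identities for $\,\cdot\, s$ should turn both into $\bigl((b\sigma)\sigma\bigr)\sigma-(b\sigma)\sigma$. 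The right-annihilation computation is the mirror image: it reduces $(\sigma s)b$ and $(s\sigma)b$ to $\sigma(sb)$ and $s(\sigma b)$ respectively, and the identities for $s\,\cdot\,$ should turn both into $\sigma\bigl(\sigma(\sigma b)\bigr)-\sigma(\sigma b)$. Triviality of the annihilator then forces $\sigma s=s\sigma$.

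The routine manipulations are harmless; the only genuine obstacle is the bookkeeping, that is, selecting at each step the correct one among \eqref{h.linear} and \eqref{h.assoc}, and, in (3), recognising that one must eliminate the \emph{opaque} element $s$ via the substitutions $sa=\sigma^2a-\sigma a$ and $as=a\sigma^2-a\sigma$ before the two sides can be matched. I would also stress that both one-sided reductions are required (hence both the left and right evaluations of \eqref{h.sig}), because $\mathfrak{a}(R)$ is defined by two-sided annihilation, so establishing only one of $b\,(\sigma s-s\sigma)=0$ or $(\sigma s-s\sigma)\,b=0$ would not suffice.
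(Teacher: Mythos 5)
Your proposal is correct and follows essentially the same route as the paper: all three parts rest on the observation that an element annihilating $R$ on both sides must vanish, which the paper packages as the injectivity of $a\mapsto\bar a$ in part (1) and then invokes for (2) and (3) by checking equality of the corresponding inner bimultiplications. The only cosmetic difference is in (3), where the paper works inside the ring $\Omega(R)$ and notes that $\bar s=\sigma^2-\sigma$ commutes with $\sigma$ before descending via \eqref{inn.ideal} and (1), whereas you carry out the equivalent element-level computation by substituting $sa=\sigma(\sigma a)-\sigma a$ and $as=(a\sigma)\sigma-a\sigma$; both are sound.
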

\begin{proof}
If $a,c\in R$ are such that $\bar{a} =\bar{c}$, then, for all $b\in R$,
$$
(a-c)b=0 \quad \& \quad b(a-c) =0,
$$
and thus $a=c$ since $\mathfrak{a}(R)=0$. This proves statement (1). For any $\sigma\in \Omega(R)$ and $a\in R$,  conditions \eqref{bimult} imply that
$$
\overline{(\sigma a)\sigma} = \overline{\sigma (a\sigma)},
$$
and hence \eqref{h.comm} follows by assertion (1), and thus every bimultiplication is self-permutable, i.e.\ a homothetism, as required.

Take $\sigma$ and $s$ such that $\sigma^2-\sigma =\bar{s}$. Then starting with $\bar{s}^2$ and using this relation sufficiently many times one finds that $\bar{s}\sigma = \sigma\bar{s}$. The assertion (3) then follows by \eqref{inn.ideal} and assertion (1).
\end{proof}

\begin{theorem}\label{thm.ann}
Let $R$ be a ring such that $\mathfrak{a}(R)=0$. There is a bijective correspondence between isomorphism classes of homothetic trusses on $R$ and equivalence classes of idempotents in the ring $\Xi(R)$ of outer bimultiplications on $R$ (with respect to the relation defined in Definition~\ref{def.out.equiv}).
\end{theorem}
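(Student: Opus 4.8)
The plan is to build an explicit bijection between isomorphism classes of homothetic trusses $\tT(\sigma,s)$ on $R$ and $\sim$-classes of idempotents of $\Xi(R)$, exploiting that the annihilator hypothesis collapses a homothetic datum onto its bimultiplication part. First I would record what a homothetic datum $(\sigma,s)$ becomes when $\mathfrak{a}(R)=0$: by Lemma~\ref{lem.ann.bimul}(2) we have $\Pi(R)=\Omega(R)$, so $\sigma$ ranges over all of $\Omega(R)$; by Lemma~\ref{lem.ann.bimul}(3) condition \eqref{h.norm} is automatic as soon as \eqref{h.sig} holds; and \eqref{h.sig}, i.e.\ $\sigma^2-\sigma=\bar s$, asserts exactly that $\sigma^2-\sigma$ is inner, equivalently that $\xi(\sigma)$ is an idempotent of $\Xi(R)$. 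Since $\beta\colon R\to\Omega(R)$, $a\mapsto\bar a$, is injective (Lemma~\ref{lem.ann.bimul}(1)), the element $s$ with $\bar s=\sigma^2-\sigma$ is \emph{uniquely} determined by $\sigma$. Thus homothetic data correspond bijectively to bimultiplications $\sigma$ with $\xi(\sigma)$ idempotent, and each $\tT(\sigma,s)$ is determined by $\sigma$ alone; this lets me define $\Theta\colon[\tT(\sigma,s)]\mapsto[\xi(\sigma)]$ into $\sim$-classes of idempotents.

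Next I would verify that $\Theta$ is well defined and surjective. For well-definedness, if $\tT(\sigma,s)\cong\tT(\sigma',s')$ then Lemma~\ref{lem.h.aut.rev} furnishes a ring automorphism $\Phi$ and $v\in R$ with $\sigma'=\Phi^*(\sigma-\bar v)$; applying $\xi$ and using the diagram \eqref{diag.phi.di} together with $\xi(\bar v)=0$ gives $\xi(\sigma')=\Phi^\di(\xi(\sigma))$, so $\xi(\sigma)\sim\xi(\sigma')$ in the sense of Definition~\ref{def.out.equiv}. For surjectivity, given an idempotent $\epsilon\in\Xi(R)$ I would pick any lift $\sigma\in\Omega(R)=\Pi(R)$ with $\xi(\sigma)=\epsilon$; then $\xi(\sigma^2-\sigma)=\epsilon^2-\epsilon=0$, so $\sigma^2-\sigma\in\bar R$, and choosing the unique $s$ with $\bar s=\sigma^2-\sigma$ yields a homothetic datum (condition \eqref{h.norm} holding by Lemma~\ref{lem.ann.bimul}(3)) with $\Theta[\tT(\sigma,s)]=[\epsilon]$.

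Injectivity of $\Theta$ is the step I expect to carry the real weight, and it is where $\mathfrak{a}(R)=0$ does work beyond trimming the axioms. Suppose $\xi(\sigma)\sim\xi(\sigma')$, say $\xi(\sigma')=\Phi^\di(\xi(\sigma))=\xi(\Phi^*(\sigma))$ by \eqref{diag.phi.di}. Then $\sigma'-\Phi^*(\sigma)\in\bar R$, so $\sigma'-\Phi^*(\sigma)=\bar w$ for some $w\in R$; setting $v=-\Phi^{-1}(w)$ and invoking \eqref{adj.inner} gives $\Phi^*(\sigma-\bar v)=\Phi^*(\sigma)-\overline{\Phi(v)}=\Phi^*(\sigma)+\bar w=\sigma'$, so the second relation of \eqref{trans} holds for this $\Phi$ and $v$. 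Lemma~\ref{lem.iso.tr} then produces an isomorphism $\tT(\sigma,s)\cong\tT(\sigma'',s'')$ whose datum has $\sigma''=\Phi^*(\sigma-\bar v)=\sigma'$. The one remaining obstacle is to match the first components, i.e.\ to see that $\tT(\sigma'',s'')=\tT(\sigma',s')$; this is exactly where injectivity of $\beta$ closes the argument, since $s''$ and $s'$ both satisfy $\bar{s''}={\sigma''}^2-\sigma''={\sigma'}^2-\sigma'=\bar{s'}$ and hence $s''=s'$. Therefore $\tT(\sigma,s)\cong\tT(\sigma',s')$, $\Theta$ is injective, and the asserted bijection follows.
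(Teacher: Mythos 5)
Your proposal is correct and follows essentially the same route as the paper: both directions rest on Lemma~\ref{lem.ann.bimul} (to identify homothetic data with bimultiplications whose image in $\Xi(R)$ is idempotent), Lemma~\ref{lem.iso.tr} and Lemma~\ref{lem.h.aut.rev}, combined with the diagram \eqref{diag.phi.di}. The only difference is cosmetic: where the paper computes $\bar t$ explicitly to verify that the second datum produced by Lemma~\ref{lem.iso.tr} matches the given one, you observe that $s$ is uniquely determined by $\sigma$ via injectivity of $\beta$, which shortcuts that calculation.
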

\begin{proof}
Let $\xi: \Omega(R)\lra \Xi(R)$ be the canonical surjection. 
By Lemma~\ref{lem.ann.bimul}, $(\sigma, s)$ is a homothetic datum on $R$ if and only if $\xi(\sigma)$ is an idempotent in $\Xi(R)$. Let $\sigma,\tau \in \Omega(R) =\Pi(R)$ such that both $\xi(\sigma)$ and $\xi(\tau)$ are  equivalent idempotents. This means that there exist $s,t,v\in R$ and a ring automorphism $\Phi:R\lra R$ such that
\begin{blist}
\item $\sigma^2 - \sigma =\bar{s}$,
\item $\tau^2  -\tau = \bar{t}$,
\item $\tau = \Phi^*(\sigma -\bar{v})$.
\end{blist}
The claim (c) follows from the facts that $\xi(\tau)={\Phi}^\di(\xi(\sigma))$, where $\Phi^\di$ is the ring homomorphism given by the diagram \eqref{diag.phi.di}, and that $\Phi^*$ is an automorphism  of $\Omega(R)$ mapping inner bimultiplications into inner ones. Therefore,
$$
\begin{aligned}
\bar{t} &= \tau^2  -\tau =  \Phi^*((\sigma -\bar{v})^2 - \sigma +\bar{v} )\\
&= \Phi^*(\sigma^2 -\bar{v}\sigma - \sigma\bar{v} +\bar{v}^2 - \sigma +\bar{v} )\\
&= \Phi^*(\bar{s} -\bar{v}\sigma - \sigma\bar{v} +\bar{v}^2 +\bar{v} ) = \overline{\Phi({s} -{v}\sigma - \sigma{v} +{v}^2 +{v} )}.
\end{aligned}
$$
The last equality follows by the combination of \eqref{inn.ideal} and \eqref{adj.inner}. By Lemma~\ref{lem.ann.bimul},
$$
t = \Phi({s} -{v}\sigma - \sigma{v} +{v}^2 +{v} ),
$$
and in view of Lemma~\ref{lem.iso.tr}, $T(\sigma,s)\cong T(\tau,t)$ as required.

In the converse direction, in view of Lemma~\ref{lem.h.aut.rev}, if $T(\sigma,s)\cong T(\tau,t)$ then here exist a ring automorphism $\Phi$ of $R$ and an element $v$ of $R$ such that 
$$
\tau = \Phi^*(\sigma - \bar{v}) = \Phi^*(\sigma) - \Phi^*(\bar{v}) =  \Phi^*(\sigma) - \overline{\Phi(v)},
$$
and hence $\xi(\tau)  ={\Phi}^\di(\xi(\sigma))$, so that the corresponding idempotents in $\Xi(R)$ are equivalent in the sense of Definition~\ref{def.out.equiv}.
\end{proof}

 Theorem~\ref{thm.ann} has a quite surprising consequence for one-sided maximal  ideals in simple rings with identity.
 
 \begin{theorem}\label{thm.simple}
Let $I$ be a maximal right (resp.\ left) ideal in a simple ring with identity $R$. Up to isomorphism there are exactly two trusses  $T$ on the heap $\hH(I)$ such that $I=\rR(T;0)$:
\begin{blist}
\item  $\tT(I)$, i.e.\ the truss with the same multiplication as that in $I$,
 \item  the truss with multiplication given by the formula
$$
I\times I\lra I, \qquad (a,b)\lto ab+a+b.
$$
\end{blist}
\end{theorem}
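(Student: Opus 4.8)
The plan is to reduce the statement to Theorem~\ref{thm.ann} and then to compute the ring of outer bimultiplications $\Xi(I)$. First I would note that the hypothesis $I=\rR(T;0)$ forces $T$ to be a homothetic truss on the ring $I$: by Theorem~\ref{thm.hom} the element $0$ determines a homothetic datum $(\sige,0)$ on $\rR(T;0)=I$ (here $0^2=0$) with $T=\tT(\sige,0)$, and conversely every homothetic truss $\tT(\sigma,s)$ on $I$ has retract at $0$ equal to $I$, since the induced action computes as $a\la 0 b=[a\cdot_T b,\,a\cdot_T 0,\,0]=ab+\sigma b$ (using $a\cdot_T 0=a\sigma+s$), whence $a\pr 0 b=[a\la 0 b,\,0\la 0 b,\,0]=ab$. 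Thus the trusses to classify are exactly the homothetic trusses on $I$. Next I would verify that $\mathfrak{a}(I)=0$ whenever $I\neq 0$: if $a\in I$ satisfies $Ia=0$, then simplicity of $R$ gives $RaR\in\{0,R\}$; the case $RaR=R$ yields $1=\sum_i r_i a c_i$ and then $x=\sum_i (xr_i)a c_i=0$ for every $x\in I$ (as $xr_i\in I$ and $Ia=0$), forcing $I=0$; hence $RaR=0$ and $a=1\cdot a\cdot 1=0$. (The degenerate case $I=0$, i.e.\ $R$ a division ring, is excluded, since then $\hH(I)$ is a single point and both trusses collapse to the trivial one.) With $\mathfrak{a}(I)=0$ established, Theorem~\ref{thm.ann} identifies isomorphism classes of homothetic trusses on $I$ with equivalence classes of idempotents in $\Xi(I)$.

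It then remains to analyse these idempotents. I would first exhibit the two listed trusses as coming from the two canonical idempotents $0$ and $1$ of $\Xi(I)$. The class of $0=\xi(0)$ is represented by the datum $(0,0)$, giving $\tT(0,0)=\tT(I)$, which is truss (a); by Theorem~\ref{thm.ann} every datum with $\xi(\sigma)=0$ yields a truss isomorphic to this one. The class of $1=\xi(\id)$ is represented by $(\id,0)$ (note $\id^2=\id+\bar 0$), and the homothetic multiplication $ab+a\sigma+\sigma b+s$ with $\sigma=\id,\ s=0$ equals $ab+a+b$, which is truss (b). These two classes are genuinely distinct: an equivalence of idempotents is implemented by a ring automorphism $\Phi^\di$ of $\Xi(I)$ (diagram~\eqref{diag.phi.di}), and a ring automorphism fixes $0$ and $1$, so $0\not\sim 1$ and the corresponding trusses are non-isomorphic.

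The crux, and the main obstacle, is to show that $0$ and $1$ are the \emph{only} idempotents of $\Xi(I)$, equivalently that $\Xi(I)$ has no nontrivial idempotents. Here I would invoke Beidar's theory of maximal essential extensions \cite{Bei:ato}, \cite{Bei:ess}: since $\bar I$ is an essential ideal of $\Omega(I)$ and $I$ is a maximal one-sided ideal of the simple unital ring $R$, the outer bimultiplication ring $\Xi(I)$ should be a division ring. Concretely, I expect to identify $\Xi(I)$ with the endomorphism division ring $\End{R}{R/I}$ of the simple right $R$-module $R/I$ (a division ring by Schur's lemma). The natural map is built from the idealiser $\mathbb{I}(I)=\{r\in R\mid rI\subseteq I\}$: each $r\in\mathbb{I}(I)$ gives a bimultiplication $a\mapsto(ra,ar)$ of $I$, inducing a ring homomorphism $\mathbb{I}(I)\lra\Omega(I)\lra\Xi(I)$, and $\mathbb{I}(I)/I\cong\End{R}{R/I}$ by the standard idealiser computation. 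The content of Beidar's results is precisely that this composite is onto with kernel $I$ — that every bimultiplication of $I$ is, modulo an inner one, induced by an element of the idealiser — so that $\Xi(I)\cong\mathbb{I}(I)/I$ is a division ring. A division ring has only the idempotents $0$ and $1$, so there are exactly two equivalence classes, completing the count.

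Finally I would note that the left-ideal case follows by the symmetric argument (interchanging the two module components of a bimultiplication, or passing to the opposite ring), and that the two resulting trusses are identified in the same way. The single step requiring genuine input beyond the earlier results of the paper is the computation $\Xi(I)\cong\End{R}{R/I}$, i.e.\ the surjectivity of $\mathbb{I}(I)\to\Xi(I)$; everything else is bookkeeping with Theorems~\ref{thm.ann} and~\ref{thm.hom} together with Lemma~\ref{lem.iso.tr}.
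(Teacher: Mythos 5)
Your proposal is correct and follows essentially the same route as the paper: reduce via Theorem~\ref{thm.ann} to counting equivalence classes of idempotents in $\Xi(I)$, and identify $\Xi(I)\cong\End R{R/I}$ (a division ring by Schur's lemma, hence with only the idempotents $0$ and $1$) through Beidar's maximal essential extensions, Flanigan's theorem and Robson's idealiser computation. The only quibble is the parenthetical ``$0^2=0$'' in your reduction --- for a general homothetic truss $\tT(\sigma,s)$ one has $0\cdot_T 0=s$, which need not vanish --- but this is harmless since Theorem~\ref{thm.ann} classifies all homothetic trusses regardless of $s$; you also supply details the paper leaves implicit, such as the verification that $\mathfrak{a}(I)=0$.
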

\begin{proof}
The proof of this theorem relies on a connection between extensions of rings to the ring of bimultiplications and maximal essential ring extensions introduced by Beidar \cite{Bei:ato}, \cite{Bei:ess}. A ring $\mathrm{ME}(I)$ is called a {\em maximal essential extension} of a ring $I$ if $I$ is an essential ideal in  $\mathrm{ME}(I)$, and if $S$ is any ring that contains $I$ as an essential ideal, then there exists a ring homomorphism $\psi: S\lra\mathrm{ME}(I)$  such that $\psi(x)=x$, for all $x\in I$. Beidar proves that if $I$ is any right ideal of a ring $R$ with identity such that $RI=R$, then 
\begin{equation}\label{me}
\mathrm{ME}(I) = \mathrm{Id}(I):= \{a\in R\;|\; aI\subseteq I\},
\end{equation}
the {\em idealiser} of $I$ in $R$, i.e.\ the largest subring of $R$ containing $I$ as an ideal. In particular, the equality \eqref{me} holds for any right ideal in a simple ring $R$ with identity. 

The notion of an idealiser was introduced by Ore \cite{Ore:for} and thoroughly studied by Robson in \cite{Rob:ide}. In particular  in \cite[Proposition~1.1]{Rob:ide} Robson proves an extension of \cite[Satz~1]{Fit:pri}, thus establishing an isomorphism of rings  
$$
\mathrm{Id}(I)/I \cong \End R {R/I}, \qquad a+ I\lto [r+I \lto ar +I].
$$
 
 On the other hand a theorem of Flanigan \cite{Fla:ide} (see \cite{And:max} for an elementary proof) states that a ring $I$ admits a maximal essential extension if and only if it has a trivial annihilator.  In that case $I$ may be identified with the essential ideal $\overline{I}$ of  $\Omega(I)$. If $S$ is any ring that contains ${I}$ as an essential ideal, then the map
 $$
 \psi: S\lra \Omega(I), \qquad s\lto [\bar{s}: (a\lto sa, \; a\lto as)],
 $$
 is a ring homomorphism such that $\psi(a) = \bar{a}$, for $a\in I$. Therefore $\mathrm{ME}(I)\cong \Omega(I)$.
 
 If $I$ is a right ideal in a simple ring $R$, then putting all this information together we obtain the following chain of isomorphisms of rings
 $$
 \Xi(I) = \Omega(I)/\bar{I} \cong \mathrm{ME}(I)/I \cong \mathrm{Id}(I)/I \cong \End R {R/I}.
 $$
 If, furthermore, $I$ is a maximal right ideal, then $R/I$ is a simple right $R$-module, hence, by the Schur Lemma, $\End R {R/I}$ is a division ring, and so is the ring of outer bimultiplications  $\Xi(I)$. Therefore, $\overline{I}$ and $\id +  \overline{I}$ are the only idempotents in  $\Xi(I)$, and by Theorem~\ref{thm.ann}, there are precisely two isomorphism classes of homothetic trusses  on $I$. The corresponding rings (with the zero of $I$ as the neutral element of the additive group) come out as stated in (a) and (b).
 
 The left ideal case is treated in an analogous way.
\end{proof}
 
 We conclude this section with an extensive example which provides one with the full classification (up to isomorphism) of trusses that can be constructed on the heap determined by the abelian group $\ZZ_p\oplus \ZZ_p$, where $p$ is a prime number.
 
 \begin{example}\label{ex.zp.zp}
 It is well-known that there are eight isomorphism classes of rings built on the abelian group $\ZZ_p\oplus \ZZ_p$. These are:
 \begin{rlist}
 \item the field of $p^2$-elements, $\FF_{p^2}$;
 \item the product ring $\ZZ_p\times \ZZ_p$;
 \item the dual numbers ring $\ZZ_p[x]/(x^2)$;
 \item the zero ring $\ZZ_p^0\times \ZZ_p^0$;
 \item the row matrix ring 
 $\begin{pmatrix} \ZZ_p & \ZZ_p \cr 0 & 0\end{pmatrix}$;
 \item the column matrix ring 
 $\begin{pmatrix} 0 & \ZZ_p \cr 0 & \ZZ_p\end{pmatrix}$;
 \item the half-zero ring $\ZZ_p\times \ZZ_p^0$;
 \item the quotient ring $x\ZZ_p[x]/x^3\ZZ_p[x]$.
 \end{rlist}
 \end{example}
 
Rings (i)--(iii) have identity so by Proposition~\ref{prop.hom.tr} each one of them admits exactly one isomorphism class of homothetic trusses. The full classification of homothetic rings on the zero ring in (iv) can be obtained from Theorem~\ref{thm.zero}. Following this theorem we obtain ten such types of rings (the full list is given in the table below). Rings (v) and (vi) are maximal right respectively left ideals in a simple ring with identity, and so each one of them will admit exactly two non-isomorphic homothetic trusses by Theorem~\ref{thm.simple}. The ring (vii) is the product of a ring with identity and the zero ring on an indecomposable group, hence the classification can be obtained by Proposition~\ref{prop.prod} and Example~\ref{ex.indecomp}, and there are four non-isomorphic trusses in this case. Thus it remains only to study the final case (viiii).

The ring $x\ZZ_p[x]/x^3\ZZ_p[x]$ is a commutative $\ZZ_p$-algebra with a basis $x,x^2$ subject to the relation $x^3=0$. Let $\sigma$ be a double homothetism and suppose that 
$$
\sigma x = ax+bx^2 \quad \& \quad x\sigma = cx +dx^2.
$$
Then the module properties imply that $\sigma x^2 = ax^2$ and $x^2\sigma =cx^2$, and 
$(x\sigma)x = x(\sigma x)$ implies that $a=c$. Now, the condition $(\sigma x)\sigma = \sigma(x\sigma)$ is automatically satisfied, and hence any homothetism is of the above form with $a=c$. We thus obtain
$$
\sigma^2 x = a^2x + 2ab x^2 \quad \& \quad x\sigma^2 = a^2x +2ad x^2.
$$
Let $s=s_1x+s_2x^2$. Then the condition $\sigma^2 = \sigma +\bar{s}$ yields,
$$
a^2 =a \quad \& \quad s_1 = 2ab-b = 2ad -d.
$$
Since $\ZZ_p$ is a field,  $b=d$ and there are only the following solutions to these equations: $a=0$, $s_1=-b$  and $a=1$, $s_1=b$. In the first case $\sigma$ is the inner homothetism $\sigma =\overline{bx}$, and so the corresponding truss has the same multiplication as $R$. In the other case, one easily finds that the corresponding truss is isomorphic to $\tT(\id,0)$ by setting $\Phi =\id$ and  $v=bx - s_2x^2$. These two trusses are not isomorphic by Proposition~\ref{prop.hom.tr}. since $x\ZZ_p[x]/x^3\ZZ_p[x]$ has no identity.

Since every truss on a given abelian heap is a homothetic truss on the ring on the corresponding abelian group, we have obtained a full classification of non-isomorphic trusses on the heap $\hH(\ZZ_p\oplus \ZZ_p)$. There are 23 such (classes of) trusses, which we list in the following table.
 
 \begin{center}
  \begin{tabular}{| c | c |  c | }
\hline
& \\
& Product $\di$ in the truss $\tT(\sigma,s)$, \\
Ring $R$ on $\ZZ_p\oplus \ZZ_p$ &    for all $a=(a_1,a_2),\;\; b=(b_1,b_2)\in R$    \\ 
& \\
\hline \hline
&  \\
$\FF_{p^2}$ &   $a\di b = ab$  
 \\ &  \\ \hline
  &  \\
$\ZZ_p\times \ZZ_p$ & $a\di b = ab$ 
\\ &  \\ \hline
&  \\
 $\ZZ_p[x]/(x^2)$ & $a\di b = ab$ \\ 
 &  \\ \hline
  &  $a\di b=0;$ \;\;\;
   $a\di b =b;$ \;\;\;
   $a\di b =a;$ \;\;\;
    $a\di b =a+b;$\\
 $\ZZ_p^0\times \ZZ_p^0$ &
  $a\di b =(a_1,0);$ \;\;\;
  $a\di b =(0,b_1);$ \;\;\;
 $a\di b =(a_1+b_1,b_2);$ \\
 & $a\di b =(a_1+b_1,a_2);$ \;\;\;
  $a\di b =(a_1+b_1,0);$ \;\;\;
  $a\di b =(a_1,b_2)$ \\ \hline 
 &  \\
 $\begin{pmatrix} \ZZ_p & \ZZ_p \cr 0 & 0\end{pmatrix}$ &$a\di b =ab$;  \;\;\; 
  $a\di b =ab+a+b$ \\ 
 &  \\
  \hline 
  &  \\
 $\begin{pmatrix} 0 & \ZZ_p \cr 0 & \ZZ_p\end{pmatrix}$ &$a\di b =ab$;  \;\;\; $a\di b =ab+a+b$ 
  \\ & 
 \\ \hline 
& \\ 
  $\ZZ_p\times \ZZ_p^0$ &$a\di b =(a_1a_2,b_1)$; \;\;\; $a\di b =(a_1a_2,b_2)$; \\ 
  &$a\di b =(a_1a_2,0)$; \;\;\;
  $a\di b =(a_1a_2,b_1+b_2)$\\  & 
\\ \hline 
  &  \\
  $x\ZZ_p[x]/x^3\ZZ_p[x]$ &$a\di b =ab$;  \;\;\;
  $a\di b =ab+a+b$ \\ 
  &  \\ \hline 
 \end{tabular}
 \end{center}
 ~\\

\part*{Coda}\label{part.coda}

\section{Conclusions and outlook}
The main aim of this work was to place a novel theory of trusses on the more familiar ground of classical ring theory. The results presented here show that trusses can be viewed as a different way of dealing with ring extensions, more precisely those that arise from Redei's double homothetisms or ideal extensions of rings by the ring of integers. In short one can make the following heuristic statement: {\bf\em up to translations trusses are equivalence classes of ring extensions by integers}. Despite this closeness of trusses and ideal ring extensions the results of the paper show that trusses cannot be reduced to rings. Our final example, which demonstrates that there are 23 different isomorphism classes of trusses on the abelian group $\ZZ_p\times \ZZ_p$, as opposed to only 8 rings classes on the same group, might serve as a justification for this claim. 

From a universal algebra point of view trusses are not as complicated algebraic systems as rings, let alone, ring extensions, hence, in our opinion should hopefully provide one with quite an effective way of describing such extensions. On the one hand the results of this paper point to applications of trusses to ring theory and homological algebra, while on the other the vast existing knowledge about ring extensions should feed into the theory of trusses.  For example, studying all possible (and, specifically, non-trivial)  ring structures supported on a given abelian group or characterising groups by types of rings which they support are long-standing problems in algebra (see e.g.\ \cite{NajWor:not}). This can now be translated into the classification problem of trusses. Classification of ring extensions, normally undertaken by homological methods,  can be replaced by classification of homothetic trusses on a given ring. Since all extensions of rings can be understood as arising from families of permutable or amicable homothetisms, developing the connections described in this paper to families of homothetisms might produce new techniques for studying more general classes of extensions. Trusses arose as an attempt to understand connections between braces and rings. Once developments presented here are extended to near-rings and their extensions, novel methods of classifying and constructing braces and thus solutions of the set-theoretic Yang-Baxter equation might be obtained. One of the key obstacles to study categories of modules over trusses, such as those encountered in the definition of projectivity \cite{BrzRyb:fun}, arise from the fact that modules over trusses are enriched over the category of abelian heaps, which is not an abelian category. Employing the ring theoretic language of extensions and thus working over an abelian category, might lead to better understanding what modules over a truss really are or how they can be defined in the most effective way.   

In short, we believe that the structural simplicity of trusses and the richness of their connections with other, well-understood, algebraic systems, while allowing one to ``play gracefully with ideas" (Oscar Wilde, {\em De Profundis}), will lead to an enhancement of the algebraic landscape. 

\section*{Acknowledgements}
The research  of the first two authors is partially supported by the National Science Centre, Poland, grant no. 2019/35/B/ST1/01115. We would like to thank Karol Pryszczepko and all other members of the Department of Algebra at the University of Bia\l ystok for interesting discussions.

\end{document}